\definecolor{darkbrown}{rgb}{.5,.2,0}
\def\doi[#1]{\href{https://doi.org/#1}{\texttt{DOI:\,#1}}}
\def\url[#1]{\href{#1}{\texttt{URL:\,#1}}}
\setlist[enumerate]{%
topsep=.5ex plus0.5ex minus0.2ex,  
parsep=.5ex plus0.3ex minus0.1ex,  
itemsep=.5ex plus0.5ex minus0.2ex, 
leftmargin=4ex,    
itemindent=0ex,    
labelwidth=2ex,    
labelsep=1ex,      
listparindent=0ex, 
rightmargin=0ex,   
align=left,        
%
font=\rmfamily,
label=\arabic*)}
\theoremstyle{plain}
\newtheorem{Cor}{Corollary}
\newtheorem{Lem}{Lemma}
\newtheorem{Pro}{Proposition}
\newtheorem{Thm}{Theorem}
\theoremstyle{definition}
\newtheorem{Exa}{Example}
\newtheorem{Que}{Question}
\newtheorem{Rem}{Remark}
\newtheorem*{Cor*}{Corollary}
\newtheorem*{Que*}{Question}
\newcommand{\C}{\mathbb{C}}
\newcommand{\N}{\mathbb{N}}
\newcommand{\R}{\mathbb{R}}
\newcommand{\cF}{\mathcal{F}}
\newcommand{\cH}{\mathcal{H}}
\newcommand{\cI}{\mathcal{I}}
\newcommand{\cK}{\mathcal{K}}
\newcommand{\fB}{\mathfrak{B}}
\newcommand{\fP}{\mathfrak{P}}
\newcommand{\fS}{\mathfrak{S}}
\newcommand{\fT}{\mathfrak{T}}
\newcommand{\ii}{\operatorname{i}}
\newcommand{\id}{\mathds{1}}
\DeclareMathOperator\aff{aff}
\DeclareMathOperator\conv{conv}
\DeclareMathOperator\ext{ext}
\DeclareMathOperator\lin{lin}
\renewcommand\P{\mathrm{P}}
\DeclareMathOperator\ri{ri}
\DeclareMathOperator\supp{supp}
\DeclareMathOperator\Tr{Tr}
\begin{document}
\selectlanguage{english}
\title[Faces, constraints, quantum theory]
{The face generated by a point, generalized affine constraints, and quantum theory}
%
\author{Stephan Weis}
\author{Maksim Shirokov}
\begin{abstract}
We analyze faces generated by points in an arbitrary convex set and their relative 
algebraic interiors, which are nonempty as we shall prove. We show that by 
intersecting a convex set with a sublevel or level set of a generalized affine 
functional, the dimension of the face generated by a point may decrease by at most 
one. We apply the results to the set of quantum states on a separable Hilbert space. 
Among others, we show that every state having finite expected values of 
any two (not necessarily bounded) positive operators admits a decomposition into pure 
states with the same expected values. We discuss applications in quantum information 
theory.
\end{abstract}
\date{June 4th, 2020}
\subjclass[2010]{52Axx,47Axx,81Qxx}
\keywords{Face generated by a point,
extreme set,
relative algebraic interior,
generalized affine constraint,
extreme point,
generalized compactness,
quantum state,
pure-state decomposition,
minimal output entropy,
operator E-norms}
%
%
%
%
%
%
%
\maketitle
%
%
\section{Introduction}
Many tasks of mathematical physics and quantum communication theory require the
analysis of the convex geometry of intersections of convex sets with a 
well-known geometry and sublevel sets of one or more  generalized affine maps 
that take values in $\R\cup\{+\infty\}$. Our motivating example is the set of 
density operators on a separable Hilbert space with bounded expected values of 
one or more positive, generally, unbounded linear operators. The analysis of 
several important characteristics of quantum systems and channels leads to the
optimization over sets of density operators of the above type, see the monographs 
\cite{H-SCI,Wat,Wilde} and the research papers
\cite{B&D,
Giovannetti-etal2014,
HolevoShirokov2006,
Man,
WildeQi2018,
W-EBN}.
Therefore, our mission is to understand the convex geometry of these sets and to
enable the use of analytic techniques.
\par
We start with basics in Section~\ref{sec:generated-face}. Relying on the
{\em Kuratowski-Zorn lemma}, we show that the face generated by a point in a
convex set has a nonempty relative algebraic interior. We discuss corollaries
and examples and we describe the face generated by a point in the intersection
of two convex sets.
\par
In Section~\ref{sec:extreme-points} we show that by intersecting a convex set 
with a sublevel set or a level set of a generalized affine map, the dimension 
of the face generated by a point may decrease by at most one. This allows us 
to exploit gaps in the list of dimensions of faces. For example, if the convex 
set has no faces of dimension $1,2,\ldots,n$, then every extreme point of the 
intersection of the convex set with the sublevel or level sets of up to $n$ 
generalized affine maps is an extreme point of the original convex set.
\par
Beginning with Section~\ref{sec:extreme-points-quantum}, we study the class of 
generalized affine maps $\fS(\cH)\to[0,+\infty]$ on the set $\fS(\cH)$ of quantum 
states, defined as the expected value functionals $f_H:\rho\mapsto\Tr H\rho$ of 
positive (not necessarily bounded) operators $H$ on a separable Hilbert space 
$\cH$. The list of dimensions of faces of $\fS(\cH)$ has a gap between zero (pure 
state) and three (Bloch ball). Hence, every extreme point of the intersection of 
the sublevel or level sets of the expected value functionals $f_{H_1},f_{H_2}$ of 
two positive operators $H_1,H_2$ is a pure state. We also show that this is not 
true for more than two operators nor for classical states.
\par
In Section 5 we combine convex geometry with topology and measure theory. As the 
sublevel sets of $f_{H_1}$ and $f_{H_2}$
are closed, $\mu$-compact, and convex sets \cite{HolevoShirokov2006,P&Sh}, we are 
able to write each state in their intersection as the barycenter of a probability 
measure supported on the set of pure states in the intersection of the same 
sublevel sets. Although the level sets of $f_{H_1}$ and $f_{H_2}$ are not closed, 
every state having finite expected values regarding $H_1$ and $H_2$ admits a
decomposition into pure states that have the same expected values almost surely. 
As an example, any bipartite state with finite marginal energies can be decomposed 
into pure states with the same marginal energies.
\par
The results allow us to show that the supremum of any convex function on the 
intersection of the sublevel or level sets of $f_{H_1}$ and $f_{H_2}$ can be 
taken only over pure states, provided that this function is lower semicontinuous 
or upper semicontinuous and upper bounded. This result (in case $H_2=H_1$)
simplifies essentially definitions of several characteristics used in quantum 
information theory and adjacent fields of mathematical physics. These 
applications are considered in Section~\ref{sec:applications}.
\par
%
%
\section{On the Face Generated by a Point}
\label{sec:generated-face}
We explore the face of a convex set generated by a point and the relative algebraic 
interior of such a face. The reader may recognize the finite-dimensional 
counterparts to our findings, for example from \cite{Rockafellar1970}.
\par
We work in the setting of a real vector space $V$ and a convex subset
$K\subseteq V$.
A subset $E\subseteq K$ is an {\em extreme set} (or an {\em extreme subset of $K$}
if we wish to emphasize the set $K$) if whenever $x\in E$ and
\[
x=(1-\lambda)y+\lambda z
\]
for some $\lambda\in(0,1)$ and $y,z\in K$, then $y$ and $z$ are also in $E$, see 
\cite{OBrien1976} for this definition. A point $x\in K$ is called an 
{\em extreme point} if $\{x\}$ is an extreme set. We denote the set of extreme points 
of $K$ by $\ext(K)$. A {\em face} of $K$ is a convex, extreme subset of $K$. 
Note that if $x$ is an extreme point, then $\{x\}$ is a face. As the intersection 
of an arbitrary family of faces of $K$ is a face of $K$, the smallest face $F_K(x)$ 
of $K$ that contains $x\in K$ exists. We call $F_K(x)$ 
{\em the face of $K$ generated by $x$}.
\par
A {\em linear combination} of $n\in\N$ points $x_1,\ldots,x_n\in V$ is a sum 
\[
\alpha_1x_1+\ldots+\alpha_nx_n
\] 
with weights $\alpha_i\in\R$, $i=1,\ldots,n$. The linear combination is an 
{\em affine combination} if $\alpha_1+\cdots+\alpha_n=1$ and a {\em convex combination} 
if $\alpha_1+\cdots+\alpha_n=1$ and $\alpha_i\geq 0$ for all $i=1,\ldots,n$. Given a 
subset $X\subseteq V$, the set $\aff(X)$ of all affine combinations of points from $X$ 
is the {\em affine hull} of $X$. This is the smallest affine subspace of $V$ containing 
$X$. The {\em translation vector space} $\lin(X)$ is the set of differences between 
each two points from $\aff(X)$.
\par
The {\em algebraic interior} of the convex set $K$ is the set of all points $x\in K$ 
such that for every straight line $g\subseteq V$ passing through $x$ the point $x$ 
lies in the interior of the intersection $K\cap g$. 
We call {\em relative algebraic interior} of $K$ the set $\ri(K)$ of all points 
$x\in K$ such that for every straight line $g\subseteq\aff(K)$ passing through $x$ 
the point $x$ lies in the interior of the intersection $K\cap g$. 
\par
\begin{Lem}\label{lem:relint}
Let $C\subseteq K$ be a convex subset, $E\subseteq K$ an extreme subset,
$F\subseteq K$ a face of $K$, and let $x\in K$ be a point. Then
\begin{enumerate}
\item
$\ri(C)\cap E\neq\emptyset\implies C\subseteq E$,
\item
$x\in F\iff F_K(x)\subseteq F$,
\item
$x\in\ri(F)\implies F=F_K(x)$.
\end{enumerate}
\end{Lem}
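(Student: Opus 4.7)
My plan is to prove item (1) as the main step and then derive (2) and (3) from it with short formal arguments. The guiding intuition is that membership in $\ri(C)$ lets one ``push past'' a point inside $C$ along a line, producing the nontrivial convex combinations that drive extremality.

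For (1), I fix $y\in\ri(C)\cap E$ and an arbitrary $z\in C$ and aim to show $z\in E$. The case $z=y$ is immediate, so I assume $z\neq y$ and consider the straight line $g$ through $y$ and $z$. Since $y,z\in C$, the line $g$ lies in $\aff(C)$, and so the definition of $\ri(C)$ applies: $y$ is in the interior of $C\cap g$ as a subset of $g$. Parametrising $g$ by $y+t(y-z)$ and moving in the direction $t>0$, I can find a small $t>0$ with $w:=y+t(y-z)\in C$, whence $y=\tfrac{1}{1+t}w+\tfrac{t}{1+t}z$ is a nontrivial convex combination $y=(1-\lambda)z+\lambda w$ with $\lambda\in(0,1)$. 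The extremality of $E$ (applied to $y\in E$ and $z,w\in K$) then forces $z,w\in E$, yielding $C\subseteq E$.

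For (2), the implication $x\in F\Rightarrow F_K(x)\subseteq F$ is the very definition of $F_K(x)$ as the smallest face of $K$ containing $x$, while the converse follows at once from $x\in F_K(x)\subseteq F$. For (3), I assume $x\in\ri(F)$. Then $x\in F$, so (2) gives $F_K(x)\subseteq F$. For the reverse inclusion I apply (1) with $C:=F$ and $E:=F_K(x)$: the face $F_K(x)$ is an extreme subset of $K$, and $x\in\ri(F)\cap F_K(x)$ makes this intersection nonempty, whence $F\subseteq F_K(x)$ and therefore $F=F_K(x)$.

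The only genuine obstacle is locating the point $w$ in the proof of (1); this step is safe precisely because the line through the two points $y,z\in C$ automatically sits inside $\aff(C)$, so the ``relative'' in $\ri(C)$ is strong enough to supply a point of $C$ on the far side of $y$ from $z$. Everything else is bookkeeping with the definitions of face, extreme subset, and minimality of $F_K(x)$.
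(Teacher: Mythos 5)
Your proof is correct and follows essentially the same route as the paper's: part (1) is established by using the relative algebraic interior to produce a point of $C$ on the far side of $y$ from $z$, making $y$ a nontrivial convex combination to which extremality of $E$ applies, and parts (2) and (3) are the same formal deductions (with (3) obtained from (1) applied to $C=F$, $E=F_K(x)$). You merely spell out in more detail the step the paper leaves implicit, namely the explicit convex combination $y=\tfrac{1}{1+t}w+\tfrac{t}{1+t}z$.
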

\begin{proof}
1) Let $x\in\ri(C)$ and $y\in C$ with $y\neq x$. Then $x$ is an interior point of
the intersection $C\cap g$ of $C$ with the line $g$ through $x$ and $y$. Hence,
if $x$ lies in the extreme set $E$ so does $y$.
2) The inclusion $F_K(x)\subseteq F$ is true as $F_K(x)$ is the minimal face
containing $x$. The converse is obvious as $x\in F_K(x)$.
3) The inclusion $F\subseteq F_K(x)$ follows from part 1) with $C=F$ and $E=F_K(x)$.
The inclusion $F_K(x)\subseteq F$ follows from part 2).
\end{proof}
Corollary~\ref{cor:char-ri} provides the converse to Lemma~\ref{lem:relint}, part 3).
\par
\begin{Lem}\label{lem:ri-convex-open}
The complement $K\setminus\ri(K)$ of the relative algebraic interior $\ri(K)$ is an 
extreme subset of $K$ and $\ri(K)$ is a convex set. 
\end{Lem}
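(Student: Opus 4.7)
The plan is to prove part 1 directly from the definition of an extreme set, and to deduce part 2 as an immediate corollary. For part 1 I would argue by contrapositive: given $x\in K$ with a decomposition $x=(1-\lambda)y+\lambda z$ where $\lambda\in(0,1)$ and $y,z\in K$, I would show that if at least one of $y,z$ (say $y$) lies in $\ri(K)$, then $x\in\ri(K)$. By the symmetry between $y$ and $z$ this is exactly what is needed to conclude that $K\setminus\ri(K)$ is extreme.

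To carry this out, let $g\subseteq\aff(K)$ be an arbitrary line through $x$, and let $v\in\lin(K)$ be a direction vector for $g$. The parallel line $g':=y+\R v$ also lies in $\aff(K)$ and passes through $y$; since $y\in\ri(K)$, there exists $\delta>0$ such that $y+sv\in K$ for all $|s|<\delta$. The identity
\[
x+tv \;=\; (1-\lambda)\!\left(y+\tfrac{t}{1-\lambda}\,v\right)+\lambda z
\]
then shows, by convexity of $K$, that $x+tv\in K$ whenever $|t|<(1-\lambda)\delta$. Hence $x$ is an interior point of $K\cap g$, and since $g$ was arbitrary, $x\in\ri(K)$, as required.

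For part 2, suppose $y,z\in\ri(K)$, $\lambda\in(0,1)$, and set $x=(1-\lambda)y+\lambda z$. If $x$ lay in $K\setminus\ri(K)$, then the extremality just established would force both $y$ and $z$ into $K\setminus\ri(K)$, contradicting the assumption. Hence $x\in\ri(K)$, so $\ri(K)$ is convex. The only place where some care is needed is the degenerate situation $\lin(K)=\{0\}$, in which $K$ is a single point, $\aff(K)$ contains no line, and the statement holds vacuously with $\ri(K)=K$; apart from this trivial case, the entire argument rests on the algebraic identity above, which transports the wiggle room around $y$ along the direction $v$ into a (smaller) wiggle room around $x$. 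I do not expect any serious obstacle here, beyond being careful that the direction $v$ really lies in $\lin(K)$ so that the translated line $g'$ still sits inside $\aff(K)$.
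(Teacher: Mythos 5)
Your proposal is correct and follows essentially the same route as the paper: both arguments show that $y\in\ri(K)$ forces $x\in\ri(K)$ by transporting the wiggle room at $y$ along a direction $v\in\lin(K)$ via the identity $x+tv=(1-\lambda)\bigl(y+\tfrac{t}{1-\lambda}v\bigr)+\lambda z$, and both then obtain convexity of $\ri(K)$ from the general fact that the complement of an extreme set is convex. No gaps.
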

\begin{proof}
By the definition of the relative algebraic interior we have
\[
K\setminus\ri(K)=
\{x\in K \mid \exists v\in\lin(K): x+\epsilon v\not\in K\; \forall \epsilon>0\}.
\]
The right-hand side is an extreme set. Indeed, let $x,y,z\in K$,
$\lambda\in(0,1)$, and $x=(1-\lambda)y+\lambda z$. If $y\in\ri(K)$, then for all
vectors $v\in\lin(K)$ there is $\epsilon_{y,v}>0$ such that
$y+\epsilon_{y,v}v\in K$. Hence we have $x+(1-\lambda)\epsilon_{y,v} v\in K$, that 
is to say $x\in\ri(K)$. Similarly, $x\in K\setminus\ri(K)$ implies 
$z\in K\setminus\ri(K)$, which proves that $K\setminus\ri(K)$ is an extreme set. 
\par
The set $\ri(K)$ is convex as it is the complement of an extreme set. Indeed, it is 
easy to show that the complement $K\setminus S$ of a subset $S\subseteq K$ is convex 
if and only if $(1-\lambda)y+\lambda z\in S$ implies that at least one of the points 
$y$ {\em or} $z$ lies in $S$ for all $y,z\in K$ and $\lambda\in(0,1)$, while both
$y$ {\em and} $z$ needed to lie in $S$ if $S$ were an extreme set.
\end{proof}
It is well known that nonempty convex sets may have empty relative algebraic
interiors, see Section~III.1.6  of \cite{Barvinok2002} and the
Examples~\ref{exa:unipol} and~\ref{exa:discrete-pms} below. This is not the case 
for faces generated by points in a convex set.
\par
\begin{Thm}\label{thm:aff}
Let $x$ be a point in $K$. The affine hull of the face of $K$ generated by $x$ is
\begin{equation}\label{eq:aff}
\aff F_K(x)=\{y\in V \mid \exists\epsilon>0 : x\pm\epsilon(y-x)\in F_K(x)\}.
\end{equation}
In particular, $x$ lies in the relative algebraic interior of $F_K(x)$.
\end{Thm}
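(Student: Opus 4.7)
Denote $A := \{y \in V : \exists \epsilon > 0,\ x \pm \epsilon(y-x) \in F_K(x)\}$, the set on the right-hand side of (\ref{eq:aff}). My plan is to establish (a) $A \subseteq \aff F_K(x)$, (b) $A$ is an affine subspace of $V$, and (c) $F_K(x) \subseteq A$; together these force $A = \aff F_K(x)$, which is the first assertion of the theorem. The ``in particular'' statement then comes for free: any line $g \subseteq \aff F_K(x) = A$ through $x$ has the form $\{x + t(y-x) : t \in \R\}$ for some $y \in A$, and the witness $\epsilon > 0$ for $y \in A$ places the segment $\{x + t(y-x) : |t| \leq \epsilon\}$ inside $F_K(x) \cap g$ by convexity, with $x$ as an interior point.

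Steps (a) and (b) are routine. For (a), if $y \in A$ then $y - x = \epsilon^{-1}\bigl((x+\epsilon(y-x))-x\bigr) \in \lin F_K(x)$, so $y \in x + \lin F_K(x) = \aff F_K(x)$. For (b), it suffices to check that $W := \{v \in V : \exists \epsilon > 0,\ x \pm \epsilon v \in F_K(x)\}$ is a linear subspace; closure under scaling is clear by adjusting $\epsilon$, while closure under addition follows from the identity $x \pm \delta(v_1+v_2) = \tfrac{1}{2}(x \pm 2\delta v_1) + \tfrac{1}{2}(x \pm 2\delta v_2)$, since for $\delta$ small enough both summands on the right lie in $F_K(x)$ and $F_K(x)$ is convex.

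The heart of the argument, and the main obstacle, is step (c). For $y \in F_K(x)$, convexity yields $x + \epsilon(y-x) = (1-\epsilon)x + \epsilon y \in F_K(x)$ for every $\epsilon \in (0,1]$, so the remaining task is to exhibit some $\epsilon > 0$ with $x - \epsilon(y-x) \in F_K(x)$. To this end I would first prove the auxiliary characterization
\[
F_K(x) = \{y \in K : \exists \epsilon > 0,\ x - \epsilon(y-x) \in K\},
\]
by showing the set $G$ on the right is contained in every face of $K$ through $x$ (immediate from the identity $x = \tfrac{\epsilon}{1+\epsilon} y + \tfrac{1}{1+\epsilon}(x-\epsilon(y-x))$, which exhibits $y$ as needed to force it into any face containing $x$) and is itself a face of $K$ containing $x$; the substantive part is verifying extremity of $G$ via a direct convex-combination manipulation showing that if $y = (1-\mu)u+\mu w \in G$ is a nontrivial decomposition in $K$, then both $u$ and $w$ lie in $G$. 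Once this characterization is in place I apply it with the convex set $F_K(x)$ in place of $K$, which is legitimate because a face of $F_K(x)$ is automatically a face of $K$ and hence $F_{F_K(x)}(x) = F_K(x)$; this produces the $\epsilon > 0$ with $x - \epsilon(y-x) \in F_K(x)$ needed for (c), and completes the proof.
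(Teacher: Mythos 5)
Your argument is correct, but it takes a genuinely different route from the paper's. The paper attacks the affine hull head-on: for each direction $v$ it introduces the extreme set $E_v=\{y\in F_K(x) \mid y+\epsilon v\notin F_K(x)\ \forall\epsilon>0\}$, shows that $E_v$ and $E_{-v}$ are proper subsets of $F_K(x)$ whenever $x+v\in\aff F_K(x)$, and then---because $E_v$ need not be convex---invokes the Kuratowski--Zorn lemma to extract a maximal convex subset of $E_v$ containing $x$, proves that this subset is a face, and contradicts the minimality of $F_K(x)$; the one-sided segment description of $F_K(x)$ is only obtained afterwards, as Corollary~\ref{cor:facex}. You invert that logical order: you first prove the segment description $F_K(x)=\{y\in K \mid \exists\epsilon>0:\ x-\epsilon(y-x)\in K\}$ directly, by checking that the right-hand side $G$ is a face of $K$ containing $x$ and is contained in every face through $x$. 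Your extremality computation does work: writing $x=\tfrac{\epsilon}{1+\epsilon}y+\tfrac{1}{1+\epsilon}z$ with $z=x-\epsilon(y-x)$ and substituting $y=(1-\mu)u+\mu w$ exhibits $x$ as an interior point of a segment joining $u$ to a convex combination of $w$ and $z$, which puts $u$ (and symmetrically $w$) in $G$; do record the convexity of $G$ as well, since the minimality of $F_K(x)$ is minimality among \emph{faces}, not among extreme sets---though that check is indeed routine. Transporting the description into $F_K(x)$ itself via $F_{F_K(x)}(x)=F_K(x)$ (faces of faces are faces) and observing that the right-hand side of \eqref{eq:aff} is an affine subspace then closes the argument. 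What your approach buys is a completely choice-free proof that dispenses with the non-convex sets $E_v$ and the Kuratowski--Zorn lemma altogether; what the paper's approach buys is the $E_v$-machinery (which echoes the proof of Lemma~\ref{lem:ri-convex-open}) and a theorem-first presentation from which \eqref{eq:affK} and Corollary~\ref{cor:facex} fall out immediately.
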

\begin{proof}
The second assertion follows from equation \eqref{eq:aff} and from the definition 
of the relative algebraic interior. We prove the equation \eqref{eq:aff}. As the 
inclusion ``$\supseteq$'' is clear, it suffices to prove ``$\subseteq$''. First, 
note that for all $v\in V$ the set
\[
E_v=\{y\in F_K(x) \mid y+\epsilon v\not\in F_K(x)\; \forall \epsilon>0\}
\]
is an extreme subset of $F_K(x)$. The proof is similar to the proof of
Lemma~\ref{lem:ri-convex-open}.
\par
The main idea is as follows. If $x+v\in\aff(F_K(x))$, then it
follows that $E_v$ and $E_{-v}$ are proper subsets of $F_K(x)$, as we detail
below. If $x$ lies in $E_v$, then according to the Kuratowski-Zorn
lemma, there is a maximal convex subset $C$ of $E_v$ containing $x$. Below,
we show that $C$ is a face of $F_K(x)$. Since $F_K(x)$ is the minimal face
containing $x$, this implies that $x$ lies outside of $E_v$. Similarly, $x$
lies outside of $E_{-v}$. This means that there are $\epsilon_1,\epsilon_2>0$
such that $x+\epsilon_1 v,x-\epsilon_2 v\in F_K(x)$. As $F_K(x)$ is convex,
it follows that $x\pm\epsilon v\in F_K(x)$ where
$\epsilon=\min(\epsilon_1,\epsilon_2)$. In other words, $x+v$ lies in the
right-hand side of \eqref{eq:aff}.
\par
As promised, we show that $E_v$ and $E_{-v}$ are proper subsets of $F_K(x)$
if $x+v\in\aff(F_K(x))$. Let $x+v=\sum_{i=1}^n\alpha_ix_i$ and
$x-v=\sum_{j=1}^m\beta_jy_j$ be affine combinations of points
$x_1,\ldots,x_n,y_1,\ldots,y_m$ from $F_K(x)$ and define
$M=(|\alpha_1|+\cdots+|\alpha_n|+|\beta_1|+\cdots+|\beta_m|)/2$. The points
\[
y=
\sum_{\substack{i=1\\\alpha_i> 0}}^n\tfrac{\alpha_i}{M}x_i
-\sum_{\substack{j=1\\\beta_j< 0}}^m\tfrac{\beta_j}{M}y_j
\quad\text{and}\quad
z=
\sum_{\substack{j=1\\\beta_j> 0}}^m\tfrac{\beta_j}{M}y_j
-\sum_{\substack{i=1\\\alpha_i< 0}}^n\tfrac{\alpha_i}{M}x_i
\]
lie in $F_K(x)$ and $v=\tfrac{M}{2}(y-z)$ holds. Then $y=z+\tfrac{2}{M}v$ shows
that $z\not\in E_v$, and $z=y-\tfrac{2}{M}v$ shows that $y\not\in E_{-v}$.
\par
\begin{figure}[h]
\includegraphics{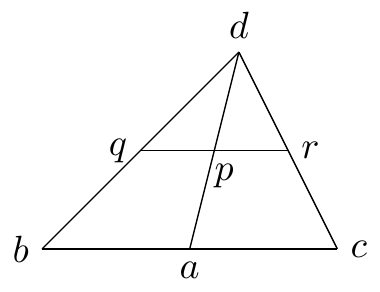}
\caption{Sketch for the proof of Theorem~\ref{thm:aff}.}
\label{fig:face}
\end{figure}
To complete the proof, we have to show that every maximal convex subset $C$ of $E_v$
containing $x$ is a face of $F_K(x)$. As $C$ is convex, it suffices to show
that $C$ is an extreme subset of $F_K(x)$. Let $a\in C$ be arbitrary and let
$a=(1-\lambda)b+\lambda c$ where $b,c$ are from $F_K(x)$ and $\lambda\in(0,1)$.
Since $C$ is a maximal convex subset of $E_v$ containing $x$, the claim follows if
we show that $E_v$ contains the convex hull of $C\cup\{b\}$. Let $d\in C$ be
arbitrary and let $q=(1-\mu)b+\mu d$ where $\mu\in[0,1]$. Also, define
$p=(1-\mu)a+\mu d$ and $r=(1-\mu)c+\mu d$, see Figure~\ref{fig:face}. Then 
$p=(1-\lambda)q+\lambda r$. As $p\in[a,d]\subseteq C\subseteq E_v$, and as $E_v$ is an 
extreme set, it follows that $q\in E_v$.
\end{proof}
The extreme set $E_v$ in the proof of Theorem~\ref{thm:aff}
is not convex in general. As an example, consider the square $K=[-1,1]\times[-1,1]$,
$x=(0,0)$, and $v=(1,1)$. Then $F_K(x)=K$ and
$E_v=\{(\eta,\xi)\in K : \text{$\eta=1$ or $\xi=1$} \}$ is the nonconvex union
of two perpendicular segments. See also Corollary~\ref{cor:extreme}.
\par
Note that (because $F_K(x)$ is an extreme subset of $K$) equation \eqref{eq:aff}
implies that the affine hull of the face of $K$ generated by a point $x\in K$ is
\begin{equation}\label{eq:affK}
\aff F_K(x)=\{y\in V \mid \exists\epsilon>0 : x\pm\epsilon(y-x)\in K\}.
\end{equation}
\par
\begin{Cor}\label{cor:facex}
Let $x\in K$. Then $F_K(x)=\bigcup_{y,z\in K,x\in(y,z)}[y,z]$. The right-hand
side is the union over all closed segments in $K$ for which $x$ lies on the open
segment. (By definition $[x,x]=(x,x)=\{x\}$.)
\end{Cor}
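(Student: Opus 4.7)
The plan is to prove both inclusions separately, with the easy direction using only the definition of a face and the harder direction relying on Theorem~\ref{thm:aff}.

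For the inclusion $\supseteq$, I would fix $y,z\in K$ with $x\in(y,z)$, so $x=(1-\lambda)y+\lambda z$ for some $\lambda\in(0,1)$. Since $F_K(x)$ is an extreme subset of $K$ containing $x$, extremality forces $y,z\in F_K(x)$; since $F_K(x)$ is also convex, $[y,z]\subseteq F_K(x)$. The degenerate case $y=z=x$ with $[x,x]=\{x\}$ is subsumed since $x\in F_K(x)$.

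For the inclusion $\subseteq$, I would take an arbitrary $w\in F_K(x)$. If $w=x$, the point lies in $[x,x]$, which appears in the union (taking $y=z=x$). Otherwise, Theorem~\ref{thm:aff} tells us $x\in\ri(F_K(x))$. Applying the definition of relative algebraic interior to the straight line through $x$ and $w$, there is $\epsilon>0$ such that the point $y:=x+\epsilon(x-w)$ lies in $F_K(x)\subseteq K$. Rearranging yields
\[
x=\tfrac{1}{1+\epsilon}\,y+\tfrac{\epsilon}{1+\epsilon}\,w,
\]
a convex combination with strictly positive coefficients, so $x\in(y,w)$ (and $y\neq w$ since $w\neq x$). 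Thus $w\in[y,w]$ sits inside the right-hand union.

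There is no real obstacle here: the $\supseteq$ direction is immediate from extremality plus convexity of $F_K(x)$, and the $\subseteq$ direction reduces to the nontrivial content of Theorem~\ref{thm:aff}, namely that $x$ belongs to the relative algebraic interior of the face it generates. The only care needed is the bookkeeping for the degenerate segment $[x,x]$ and the verification that the auxiliary endpoint $y$ produced from the relative-interior condition is distinct from $w$.
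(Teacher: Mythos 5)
Your proof is correct and follows essentially the same route as the paper: the $\supseteq$ direction from extremality and convexity of $F_K(x)$, and the $\subseteq$ direction by producing the auxiliary endpoint $x+\epsilon(x-w)$ from Theorem~\ref{thm:aff} and writing $x=\tfrac{1}{1+\epsilon}y+\tfrac{\epsilon}{1+\epsilon}w$. The only cosmetic difference is that the paper invokes the affine-hull formula \eqref{eq:affK} where you invoke the statement $x\in\ri(F_K(x))$; both are consequences of the same theorem and yield the identical construction.
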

\begin{proof}
The inclusion ``$\supseteq$'' follows as $F_K(x)$ is an extreme set containing
$x$. Conversely, let $y\in F_K(x)$. By equation \eqref{eq:affK} there
is $\epsilon>0$ such that the point $z_-=x-\epsilon(y-x)$ lies in $K$. Then
$x=\tfrac{1}{1+\epsilon}z_-+\tfrac{\epsilon}{1+\epsilon}y$ shows that $x$ lies
in the open segment $(z_-,y)$. This completes the proof.
\end{proof}
Corollary~\ref{cor:facex} shows that the closure of $F_K(x)$ in a topological
vector space would be the {\em face function} of $K$ at a point $x$ in $K$, as 
studied in \cite{KleeMartin1971}. A subset $E$ of $K$ satisfying the property 2) 
of Corollary~\ref{cor:extreme} below is called an {\em extreme set} in the paper
\cite{Papadopoulou1977}. The faces with nonempty relative algebraic interiors 
are the building blocks of extreme sets and of convex sets in the sense of 
Corollary~\ref{cor:extreme}, part 3), and Corollary~\ref{cor:partition},
respectively.
\par
\begin{Cor}\label{cor:extreme}
Let $E\subseteq K$ be a subset. The following assertions are equivalent.
The set $E$
\begin{enumerate}
\item
is an extreme subset of $K$,
\item
contains the face $F_K(x)$ of $K$ generated by any point $x$ in $E$,
\item
is a union of faces of $K$ having nonempty relative algebraic interiors,
\item
is a union of faces of $K$.
\end{enumerate}
\end{Cor}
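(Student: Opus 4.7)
The plan is to establish the cyclic chain of implications $1)\Rightarrow 2)\Rightarrow 3)\Rightarrow 4)\Rightarrow 1)$, each step being short once the preceding results (Lemma~\ref{lem:relint} and Theorem~\ref{thm:aff}) are in hand.

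For $1)\Rightarrow 2)$, I would fix $x\in E$ and aim to deduce $F_K(x)\subseteq E$. The clean way is to apply Lemma~\ref{lem:relint}, part 1), with the convex set $C:=F_K(x)$ and the extreme subset $E$: by Theorem~\ref{thm:aff}, $x$ lies in $\ri(F_K(x))$, and by hypothesis $x\in E$, so $\ri(F_K(x))\cap E\neq\emptyset$, forcing $F_K(x)\subseteq E$. This is the most substantive step, since it is precisely here that the nontrivial content of Theorem~\ref{thm:aff} (nonemptiness of the relative algebraic interior of a generated face) is used.

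For $2)\Rightarrow 3)$, I would simply write
\[
E=\bigcup_{x\in E}F_K(x),
\]
where the inclusion ``$\supseteq$'' holds by hypothesis and ``$\subseteq$'' holds because $x\in F_K(x)$. Each face in this union has nonempty relative algebraic interior by Theorem~\ref{thm:aff}, so $E$ is exhibited as a union of faces of $K$ with nonempty relative algebraic interiors. The implication $3)\Rightarrow 4)$ is a tautology.

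For $4)\Rightarrow 1)$, I would verify directly from the definition that an arbitrary union of extreme subsets is again an extreme subset: if $E=\bigcup_\alpha F_\alpha$ with each $F_\alpha$ extreme in $K$, and if $x\in E$ satisfies $x=(1-\lambda)y+\lambda z$ with $y,z\in K$ and $\lambda\in(0,1)$, then $x\in F_\alpha$ for some $\alpha$, whence $y,z\in F_\alpha\subseteq E$. This closes the cycle. I do not foresee a genuine obstacle; the whole argument is really a bookkeeping exercise around the single nontrivial input that every generated face has nonempty relative algebraic interior.
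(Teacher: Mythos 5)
Your proof is correct and follows essentially the same route as the paper: the same cycle $1)\Rightarrow 2)\Rightarrow 3)\Rightarrow 4)\Rightarrow 1)$, with Theorem~\ref{thm:aff} as the single nontrivial input. The only (cosmetic) difference is in $1)\Rightarrow 2)$, where the paper cites Corollary~\ref{cor:facex} while you combine Lemma~\ref{lem:relint}, part 1), with Theorem~\ref{thm:aff} directly; both arguments are valid and rest on the same underlying fact.
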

\begin{proof}
The implication 1) $\Rightarrow$ 2) follows from Corollary~\ref{cor:facex}. The 
implication 2) $\Rightarrow$ 3) follows from Theorem~\ref{thm:aff} as $x$ is 
a relative algebraic interior point of $F_K(x)$ for all $x\in K$. 
The implication 3) $\Rightarrow$ 4) is obvious and 4) $\Rightarrow$ 1) is true 
as every union of extreme sets is an extreme set.
\end{proof}
\begin{Cor}\label{cor:partition}
The family of relative algebraic interiors of faces of $K$ is a partition of $K$.
\end{Cor}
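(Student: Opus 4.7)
The plan is to verify the two defining properties of a partition: that the relative algebraic interiors of faces of $K$ cover $K$, and that two such relative algebraic interiors are either equal or disjoint. Both properties follow almost immediately from the results already established.

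For the covering property, I would observe that Theorem~\ref{thm:aff} provides, for every $x\in K$, a face $F_K(x)$ of $K$ with $x\in\ri(F_K(x))$. Hence $K$ equals the union of the sets $\ri(F)$ as $F$ ranges over the faces of $K$ that are generated by some point. (Faces not of this form can only be the empty face, whose relative algebraic interior is empty and contributes nothing.)

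For disjointness, suppose $F$ and $F'$ are two faces of $K$ and that some point $x$ lies in $\ri(F)\cap\ri(F')$. Applying part 3) of Lemma~\ref{lem:relint} twice yields $F=F_K(x)=F'$, so $F=F'$ and in particular $\ri(F)=\ri(F')$. This shows the sets $\ri(F)$, as $F$ varies over faces of $K$, are pairwise disjoint or identical.

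There is no real obstacle here; the substance of the statement has already been done in Theorem~\ref{thm:aff} (nonemptiness of $\ri(F_K(x))$) and in Lemma~\ref{lem:relint} (uniqueness of the face whose relative algebraic interior contains a given point). The only care needed is to restrict the indexing family to those faces with nonempty relative algebraic interior, so that the partition consists of nonempty cells, which is the standard convention.
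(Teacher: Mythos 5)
Your proof is correct and follows exactly the paper's own argument: Theorem~\ref{thm:aff} gives the covering via $x\in\ri(F_K(x))$, and part 3) of Lemma~\ref{lem:relint} applied to a common point of two relative algebraic interiors gives disjointness. The additional remark about discarding faces with empty relative algebraic interior is a reasonable clarification of the convention but adds nothing essential.
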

\begin{proof}
The family $\{\ri(F_K(x)):x\in K\}$ covers $K$, as Theorem~\ref{thm:aff} shows
$x\in\ri(K)$ for all $x\in K$. Let $F,G$ be faces of $K$ that intersect in their
relative algebraic interiors, say $x\in\ri(F)\cap\ri(G)$. Then part 3) of
Lemma~\ref{lem:relint} provides $F=G=F_K(x)$. This proves the claim.
\end{proof}
We characterize relative algebraic interiors of faces.
\par
\begin{Cor}\label{cor:char-ri}
Let $F$ be a face of $K$ and $x$ a point in $K$. Then 
\[
x\in\ri(F) \iff F=F_K(x).
\]
In particular, if $x$ and $y$ are points in $K$, then 
\[
x\in\ri(F_K(y)) \iff F_K(y)=F_K(x).
\]
\end{Cor}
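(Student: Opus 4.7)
The plan is to prove the corollary by directly combining the two pieces of information we already have: Lemma~\ref{lem:relint}, part~3), and the ``in particular'' clause of Theorem~\ref{thm:aff}. Each direction of the main equivalence matches one of these results almost verbatim, so no new geometric argument is needed; the work lies only in assembling them and then specialising to $F = F_K(y)$ for the second claim.

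For the forward direction, assume $x \in \ri(F)$. Since $F$ is a face of $K$, Lemma~\ref{lem:relint}, part~3), applies and yields $F = F_K(x)$ immediately. For the reverse direction, assume $F = F_K(x)$. Theorem~\ref{thm:aff} states that $x$ lies in the relative algebraic interior of the face $F_K(x)$ it generates; substituting $F = F_K(x)$ gives $x \in \ri(F)$. Together these establish the equivalence $x \in \ri(F) \iff F = F_K(x)$.

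The ``in particular'' statement follows by taking $F$ to be the face $F_K(y)$, which is indeed a face of $K$ by definition, and applying the equivalence just proved. This gives $x \in \ri(F_K(y)) \iff F_K(y) = F_K(x)$.

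I do not expect any obstacle: both ingredients are already in hand, and the only thing to check is that $F_K(y)$ qualifies as an admissible $F$ in the main equivalence, which is immediate. The corollary is essentially a packaging of Lemma~\ref{lem:relint}(3) and Theorem~\ref{thm:aff} into a single ``if and only if''.
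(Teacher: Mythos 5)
Your proof is correct and follows exactly the paper's own argument: the forward direction is Lemma~\ref{lem:relint}, part~3), the reverse direction is the ``in particular'' clause of Theorem~\ref{thm:aff}, and the second claim is the specialisation $F=F_K(y)$. Nothing to add.
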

\begin{proof}
The first statement follows directly from part 3) of Lemma~\ref{lem:relint}
and Theorem~\ref{thm:aff}. The second statement is the special case
$F=F_K(y)$ of the first statement.
\end{proof}
\begin{Exa}[Univariate polynomials]\label{exa:unipol}
Let $V$ be the vector space of all countably infinite sequences of real numbers such
that all but finitely many terms are zero. Each nonzero element from $V$ may be
written in the form
\[
v=(a_0,a_1,a_2,\ldots,a_{n-1},a_n,0,0,0,\ldots),
\qquad a_n\neq 0
\]
where $a_i\in\R$ for all $i=0,\ldots,n$ and $n\in\N_0=\{0,1,2,\ldots\}$. There is a 
one-to-one correspondence between $V$ and the space $\R[x]$ of all polynomials in one 
variable $x$ with real coefficients, {\em via} the identification of the above vector 
$v$ with the nonzero polynomial
\begin{equation}\label{eq:poly}
p=a_nx^n+a_{n-1}x^{n-1}+\cdots+a_2x^2+a_1x+a_0,
\qquad a_n\neq 0.
\end{equation}
The {\em degree} of $p$ is $n=\deg(p)$ and the {\em leading coefficient} is $a_n$.
\par
It is well known that the convex set $K\subset\R[x]$ of polynomials with positive 
leading coefficients has an empty relative algebraic interior \cite{Barvinok2002}. 
Below in Lemma~\ref{lem:polynomials} we show that the subsets of $K$ consisting of
polynomials of constant degrees are relative algebraic interiors of faces of $K$. 
As the family of relative algebraic interiors of all faces is a partition of $K$ 
(see Corollary~\ref{cor:partition}), we corroborate that the relative algebraic 
interior $\ri(K)$ is empty.
\end{Exa}
\begin{Lem}\label{lem:polynomials}
Let $K\subset\R[x]$ denote the convex set of univariate polynomials with positive
leading coefficients. The face of $K$ generated by a polynomial $p\in K$ is 
$F_K(p)=F_{\deg(p)}$, where 
\[
F_n=\{q\in K \mid \deg(q)\leq n\},
\qquad n\in\N_0.
\]
The relative algebraic interior of $F_n$ is 
\[
\ri(F_n)=\{q\in K \mid \deg(q)=n\},
\qquad n\in\N_0,
\]
which is an open half-space of dimension $n+1$. Every nonempty extreme subset of $K$
is equal to $K$ or to one of the faces $F_n$, $n\in\N_0$.
\end{Lem}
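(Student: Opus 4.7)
My approach is, first, to verify each $F_n$ is a face of $K$; second, to compute its relative algebraic interior and invoke Corollary~\ref{cor:char-ri} to identify $F_K(p)$; and third, to classify extreme subsets via Corollary~\ref{cor:extreme}. Starting with the faces: convexity of $F_n$ is immediate since the degree of a sum does not exceed the maximum of the two degrees. For extremeness, suppose $q=(1-\lambda)r+\lambda s\in F_n$ with $r,s\in K$ and $\lambda\in(0,1)$, and let $d=\max(\deg r,\deg s)$. The strict positivity of the leading coefficients of $r$ and $s$ forces the $x^d$ coefficient of $q$ to be a positive convex combination of nonnegative numbers (strictly positive for whichever of $r,s$ realizes degree $d$), hence $\deg q\ge d$, giving $d\le n$ and so $r,s\in F_n$. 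This is the conceptual heart of the argument --- the positivity of leading coefficients is essential to rule out degree cancellation --- but it is not a technical obstacle.

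Next, I compute $\ri(F_n)$. Taking $x^n\in F_n$ and observing that $z_t:=(1-t)x^n+ty\in F_n$ for any polynomial $y$ of degree at most $n$ and sufficiently small $t>0$, the identity $y=\frac{1}{t}z_t+(1-\frac{1}{t})x^n$ exhibits every such $y$ as an affine combination of elements of $F_n$; conversely affine combinations preserve the property of having degree at most $n$. Hence $\aff(F_n)$ is the $(n+1)$-dimensional space of polynomials of degree at most $n$. For $q\in F_n$ with $\deg q=n$ and any $y\in\aff(F_n)$, the $x^n$ coefficient of $q\pm\epsilon(y-q)$ stays strictly positive for $\epsilon$ small, so $q\in\ri(F_n)$. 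Conversely, if $\deg q<n$, the line through $q$ in direction $x^n$ exits $F_n$ immediately on the $-x^n$ side since the $x^n$ coefficient turns negative, so $q\notin\ri(F_n)$. This identifies $\ri(F_n)=\{q\in K:\deg q=n\}$ as the open half-space cut out by the condition that the $x^n$ coefficient is strictly positive inside $\aff(F_n)$, of dimension $n+1$. Since $F_{\deg(p)}$ is a face and $p\in\ri(F_{\deg(p)})$, Corollary~\ref{cor:char-ri} yields $F_K(p)=F_{\deg(p)}$.

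Finally, for the classification of nonempty extreme subsets $E\subseteq K$: by Corollary~\ref{cor:extreme} one has $E=\bigcup_{p\in E}F_K(p)=\bigcup_{n\in S}F_n$ where $S=\{\deg(p):p\in E\}\subseteq\N_0$. Since $F_n\subseteq F_m$ whenever $n\le m$, this union collapses to $F_{\max S}$ when $S$ is bounded and to $\bigcup_{n\in\N_0}F_n=K$ when $S$ is unbounded, which completes the classification.
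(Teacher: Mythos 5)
Your proof is correct, but it runs in the opposite direction from the paper's. The paper first computes $F_K(p)$ directly from Corollary~\ref{cor:facex}: $q\in F_K(p)$ iff $(1-\lambda)p+\lambda q\in K$ for some $\lambda<0$, which by inspecting leading coefficients is equivalent to $\deg(q)\leq\deg(p)$; the identity $F_K(p)=F_{\deg(p)}$ then makes the face property of $F_n$ automatic, and $\ri(F_n)$ falls out of the equivalence $q\in\ri\left(F_K(p)\right)\iff F_K(q)=F_K(p)\iff\deg(q)=\deg(p)$ from Corollary~\ref{cor:char-ri}. You instead verify by hand that $F_n$ is an extreme set (via the no-cancellation argument for positive leading coefficients), compute $\aff(F_n)$ and $\ri(F_n)$ from the definitions, and only then invoke Corollary~\ref{cor:char-ri} in the direction $p\in\ri(F)\Rightarrow F=F_K(p)$. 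The paper's route is shorter because Corollary~\ref{cor:facex} does the work of both your face verification and your interior computation at once; your route has the advantage of explicitly exhibiting $\aff(F_n)$ as the $(n+1)$-dimensional space of polynomials of degree at most $n$, which is what actually justifies the clause ``which is an open half-space of dimension $n+1$'' --- a clause the paper's proof leaves implicit. The classification of nonempty extreme subsets via Corollary~\ref{cor:extreme} and the nested chain $F_0\subseteq F_1\subseteq\cdots$ is identical in both arguments.
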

\begin{proof}
Let $p,q\in K$ be polynomials with positive leading coefficients. 
Corollary~\ref{cor:facex} shows that $q$ lies in $F_K(p)$ if and only if there is 
$\lambda<0$ such that $(1-\lambda)p+\lambda q\in K$, which is equivalent to 
$\deg(q)\leq\deg(p)$. This proves  that $F_K(p)=F_{\deg(p)}$. 
Corollary~\ref{cor:char-ri} shows that $q$ lies in $\ri\left(F_K(p)\right)$ if and 
only if $F_K(p)=F_K(q)$, or equivalently $\deg(p)=\deg(q)$.
\par
We show that every nonempty extreme subset $E$ of $K$ equals $K$ or $F_n$ for some $n\in\N_0$. Let $I=\{\deg(p) : p\in E\}$. Then $E=\bigcup_{i\in I}F_n$ holds as 
$p\in E$ implies $F_{\deg(p)}=F_K(p)\subseteq E$ by Corollary~\ref{cor:extreme}. 
Since $F_0\subseteq F_1\subseteq F_2\subseteq\cdots$, this implies $E=F_n$ if 
$\sup(I)=n<\infty$ and $E=K$ if $\sup(I)=\infty$.
\end{proof}
\begin{Exa}[Discrete probability measures]\label{exa:discrete-pms}
The set of probability measures on the set of natural numbers $\N=\{1,2,3,\ldots\}$
is affinely isomorphic to the set 
\[
\Delta_\N=
\left\{p:\N\to\R\mid
\text{$p(n)\geq 0\;\forall n\in\N$, and $\sum_{n\in\N}p(n)=1$}\right\}
\]
of probability densities with respect to the counting measure. The set of extreme 
points $\ext(\Delta_\N)$ consists of the densities $\delta_n(m)=\left\{
\begin{smallmatrix}\text{$1$ if $m=n$}\\\text{$0$ if $m\neq n$}\end{smallmatrix}
\right.$, $m\in\N$, concentrated at the points $n\in\N$. We may think of 
$\Delta_\N$ as a {\em simplex}, or better a {\em $\sigma$-simplex}, as each density 
$p\in\Delta_\N$ can be written in a unique way as a countable convex combination 
$p=\sum_{n\in\N}\lambda_n\delta_n$ of extreme points, where $\lambda_n\geq 0$ for 
all $n\in\N$, and $\sum_{n\in\N}\lambda_n=1$.
\par
Let $\,\supp(p)=\{n\in\N:p(n)>0\}$ denote the {\em support} of a density 
$p\in\Delta_\N$ and 
\[
\Delta_I=
\left\{p\in\Delta_\N\mid\supp(p)\subseteq I\right\}
\]
the set of densities supported on a subset $I\subseteq\N$. The convex set $\Delta_I$ 
is an extreme subset and hence a face of $\Delta_\N$ as 
\begin{equation}\label{eq:supp-union}
\supp\left((1-\lambda)p+\lambda q\right)=\supp(p)\cup\supp(q)
\end{equation} 
holds for all $p,q\in\Delta_\N$ and $\lambda\in(0,1)$. Therefore, 
$\ext(\Delta_I)=\{\delta_n\mid n\in I\}$. The convex hull 
\[
\conv\left(\{\delta_n\mid n\in I\}\right)
=
\{p\in\Delta_I : |\supp(p)|<\infty\}
\]
is the set of densities with finite support in $I$. Again by equation 
\eqref{eq:supp-union}, the convex set $\,\conv\left(\{\delta_n\mid n\in I\}\right)$ 
is an extreme subset and hence a face of $\Delta_\N$. 
\par
For finite subsets $I\subset\N$ we have 
$\,\conv\left(\{\delta_n\mid n\in I\}\right)=\Delta_I$. Lemma~\ref{lem:discrete-pms} 
below shows $F_{\Delta_\N}(p)=\Delta_{\supp(p)}$, which has relative algebraic interior 
\[
\ri(F_{\Delta_\N}(p))=\{q\in\Delta_\N:\supp(q)=\supp(p)\}
\] 
for all densities $p\in\Delta_\N$ of finite support. 
\par
Let $I\subseteq\N$ be infinite. Then the relative algebraic interior of the face 
$\,\conv\left(\{\delta_n\mid n\in I\}\right)$ is empty. This follows from part 3) of 
Lemma~\ref{lem:relint}, as $F_{\Delta_\N}(p)=\Delta_{\supp(p)}$ is strictly included 
in $\,\conv\left(\{\delta_n\mid n\in I\}\right)$ for all densities $p$ with finite 
support in $I$. The relative algebraic interior of the simplex $\Delta_I$ is empty, 
too, but for different reasons aside from the support sizes. Let us consider the 
interval
\[
\cI(I)=
\left\{\text{$F$ is a face of $\Delta_\N$} \mid
\conv\left(\{\delta_n\mid n\in I\}\right)
\subseteq F\subseteq\Delta_I \right\},
\]
partially ordered by inclusion. Lemma~\ref{lem:discrete-pms} shows that the face
$F_{\Delta_\N}(p)$ belongs to $\cI(I)$ if and only if $\supp(p)=I$ and that the
inclusion of such faces is determined by the asymptotics of converging series.
Following an example by Hadamard \cite{Hadamard1894}, we define the map $p_H:\N\to\R$,
\[
p_H(n)
=p(n)/(\sqrt{r_n}+\sqrt{r_{n+1}})
=\sqrt{r_n}-\sqrt{r_{n+1}},
\qquad n\in\N,
\]
for every density $p\in\Delta_\N$ with support $I$ and $r_n=\sum_{m\geq n}p(m)$. The 
map $p_H$ is a probability density with support $I$, and for $n\in I$ we have
\[
p(n)/p_H(n)
=\sqrt{r_n}+\sqrt{r_{n+1}}
\stackrel{n\to\infty}{\longrightarrow}0
\quad\text{and}\quad
p_H(n)/p(n)\stackrel{n\to\infty}{\longrightarrow}\infty.
\]
Lemma~\ref{lem:discrete-pms} shows that $p\in F_{\Delta_\N}(p_H)$ and
$p_H\not\in F_{\Delta_\N}(p)$. Part 2) of Lemma~\ref{lem:relint} then implies that
$\cI(I)$ contains the infinite chain of strictly included faces
\begin{equation}\label{eq:Hadamard-chain}
F_{\Delta_\N}(p)
\subset F_{\Delta_\N}(p_H)
\subset F_{\Delta_\N}((p_H)_H)
\subset\cdots
\end{equation}
The strict inclusions $F_{\Delta_\N}(p)\subset\Delta_I$ for all densities $p$ with
support $I$, the strict inclusions $F_{\Delta_\N}(q)\subseteq\Delta_J\subset\Delta_I$
for all densities $q$ with support $J\subset I$, and part 3) of Lemma~\ref{lem:relint}
show $\ri(\Delta_I)=\emptyset$. 
\par
We close the example with a glimpse at the interval $\cI(\N)$. We consider
the Euler-Riemann zeta function $\zeta(s)=\sum_{n\in\N}n^{-s}$ and the map 
$p_s:\N\to\R$, $n\mapsto\zeta(s)^{-1}\cdot n^{-s}$ for all
$s>1$. Lemma~\ref{lem:discrete-pms} shows that $p_s\in F_{\Delta_\N}(p_t)$ holds 
if and only if $t\leq s$ for all $s,t>1$. Hence, part 2) of Lemma~\ref{lem:relint} 
proves
\[
F_{\Delta_\N}(p_s) \subseteq F_{\Delta_\N}(p_t)
\iff
t\leq s,
\qquad s,t>1.
\]
The interval $\cI(\N)$ contains the uncountable chain of faces
$\{F_{\Delta_\N}(p_s) : s>1\}$.
\end{Exa}
\begin{Que}\label{que:faces-hyper}
We noted in Example~\ref{exa:discrete-pms} that the partial ordering of the faces 
generated by points of the $\sigma$-simplex $\Delta_\N$ 
is governed by the asymptotics of converging series, a classical topic of real 
analysis \cite[Section~41]{Knopp1990}. Could the $\sigma$-simplex $\Delta_\N$ 
provide a convex geometry approach to the theory of series? 
\par
We showed in Example~\ref{exa:discrete-pms} that the convex sets 
$\,\conv\left(\{\delta_n\mid n\in I\}\right)$ and $\Delta_I$ are faces of the 
$\sigma$-simplex $\Delta_\N$ and that they have empty relative algebraic interiors 
for all infinite subsets $I\subseteq\N$. Are there any other faces of $\Delta_\N$ 
that also have empty relative algebraic interiors? 
\end{Que}
\begin{Lem}\label{lem:discrete-pms}
The face of $\Delta_\N$ generated by a density $p\in\Delta_\N$ is
\begin{equation}\label{eq:faceDelta}
F_{\Delta_\N}(p)=
\{q\in\Delta_{\supp(p)} \mid \sup_{n\in\supp(p)}q(n)/p(n)<\infty\}.
\end{equation}
The face $F_{\Delta_\N}(p)$ has the relative algebraic interior
\begin{equation}\label{eq:ri-faceDelta}
\ri\left(F_{\Delta_\N}(p)\right)=
\{q\in F_{\Delta_\N}(p) \mid
\inf_{n\in\supp(p)}q(n)/p(n)>0\}.
\end{equation}
\end{Lem}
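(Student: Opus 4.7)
The plan is to leverage Theorem~\ref{thm:aff} together with the characterization of relative algebraic interiors from Corollary~\ref{cor:char-ri}. The key observation is the following equivalent form of face membership: for $q \in \Delta_\N$, one has $q \in F_{\Delta_\N}(p)$ if and only if there exists $\epsilon > 0$ such that $z_\epsilon = (1+\epsilon)p - \epsilon q \in \Delta_\N$. Indeed, the ``only if'' direction is immediate from Theorem~\ref{thm:aff}: since $q \in F_{\Delta_\N}(p) \subseteq \aff F_{\Delta_\N}(p)$, some $\epsilon > 0$ satisfies $p - \epsilon(q-p) \in F_{\Delta_\N}(p) \subseteq \Delta_\N$. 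Conversely, if $z_\epsilon \in \Delta_\N$ and $z_\epsilon \neq q$, then $p = \frac{1}{1+\epsilon} z_\epsilon + \frac{\epsilon}{1+\epsilon} q$ places $p$ on the open segment $(z_\epsilon, q)$, whence $q \in F_{\Delta_\N}(p)$ by Corollary~\ref{cor:facex}; the degenerate case $z_\epsilon = q$ forces $q = p$ and is trivial.

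Next I would translate the membership $z_\epsilon \in \Delta_\N$ into pointwise inequalities. The sum $\sum_n z_\epsilon(n) = (1+\epsilon) - \epsilon$ equals $1$ automatically, so only non-negativity matters: $(1+\epsilon) p(n) \geq \epsilon q(n)$ for every $n \in \N$. Off $\supp(p)$ this forces $q(n) = 0$, yielding $\supp(q) \subseteq \supp(p)$; on $\supp(p)$ it is equivalent to $q(n)/p(n) \leq 1 + 1/\epsilon$. Such an $\epsilon > 0$ therefore exists if and only if $q \in \Delta_{\supp(p)}$ and $\sup_{n \in \supp(p)} q(n)/p(n) < \infty$, which is exactly equation~\eqref{eq:faceDelta}.

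For equation~\eqref{eq:ri-faceDelta} I would apply Corollary~\ref{cor:char-ri}: $q \in \ri(F_{\Delta_\N}(p))$ if and only if $F_{\Delta_\N}(q) = F_{\Delta_\N}(p)$, which by part 2) of Lemma~\ref{lem:relint} amounts to having both $q \in F_{\Delta_\N}(p)$ and $p \in F_{\Delta_\N}(q)$. Substituting \eqref{eq:faceDelta} into both, the requirement $p \in F_{\Delta_\N}(q)$ becomes $\supp(p) \subseteq \supp(q)$ together with $\sup_{n \in \supp(p)} p(n)/q(n) < \infty$. Combined with $\supp(q) \subseteq \supp(p)$ from $q \in F_{\Delta_\N}(p)$, this forces $\supp(q) = \supp(p)$, and the sup-finiteness is then equivalent to $\inf_{n \in \supp(p)} q(n)/p(n) > 0$. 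Conversely, the latter inf-condition automatically entails $\supp(p) \subseteq \supp(q)$, so one recovers exactly \eqref{eq:ri-faceDelta}.

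The main obstacle is the slightly delicate bookkeeping around the support conditions: in particular, verifying that $\supp(q) \subseteq \supp(p)$ emerges automatically from the non-negativity of $z_\epsilon$ off $\supp(p)$, and that the condition $\inf_{n \in \supp(p)} q(n)/p(n) > 0$ in \eqref{eq:ri-faceDelta} tacitly encodes $\supp(p) \subseteq \supp(q)$ rather than stating it explicitly. Apart from these support manipulations, the whole argument is a direct translation of a geometric criterion (extending the chord from $q$ through $p$ without leaving $\Delta_\N$) into pointwise arithmetic on the densities.
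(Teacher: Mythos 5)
Your argument is correct and follows essentially the same route as the paper: equation \eqref{eq:faceDelta} is obtained in both cases by reducing membership in $F_{\Delta_\N}(p)$ to the existence of $\epsilon>0$ with $(1+\epsilon)p-\epsilon q\in\Delta_\N$ (the paper phrases this via Corollary~\ref{cor:facex} with $\lambda<0$), and equation \eqref{eq:ri-faceDelta} is derived from Corollary~\ref{cor:char-ri}. Your only deviation is minor: where the paper compares the two face formulas as sets (condition \eqref{eq:asymptote}, quantified over all $r\in\Delta_{\supp(p)}$) and then specializes to $r=p$ and $r=q$, you pass directly to the mutual-membership criterion $q\in F_{\Delta_\N}(p)$ and $p\in F_{\Delta_\N}(q)$ via Lemma~\ref{lem:relint}, part 2), arriving at the same pair of conditions $\sup_n q(n)/p(n)<\infty$ and $\sup_n p(n)/q(n)<\infty$.
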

\begin{proof}
Corollary~\ref{cor:facex} shows that a density $q\in\Delta_\N$ lies in
$F_{\Delta_\N}(p)$ if and only if there is $\lambda<0$ such that
$(1-\lambda)p+\lambda q\in\Delta_\N$. If such a $\lambda<0$ exists, $q(n)=0$ holds 
for all $n\not\in\supp(p)$ and $\tfrac{q(n)}{p(n)}\leq\tfrac{1-\lambda}{-\lambda}$ 
holds for all $n\in\supp(p)$. Conversely, let $q\in\Delta_{\supp(p)}$ and let
$\mu=\supp_{n\in\supp(p)}\tfrac{q(n)}{p(n)}<\infty$. As $q$ is a probability
density supported on $\supp(p)$, we have $\mu\geq 1$ with equality if and
only if $p=q$. As $p\in F_{\Delta_\N}(p)$, we may assume $\mu>1$ and define 
$\lambda=\tfrac{-1}{\mu-1}$. Then
\[
\tfrac{\mu}{\mu-1}p(n)+\tfrac{-1}{\mu-1}q(n)
\geq
\tfrac{\mu}{\mu-1}p(n)+\tfrac{-1}{\mu-1}\mu p(n)
=0
\qquad
\forall n\in\supp(p)
\]
completes the proof of equation \eqref{eq:faceDelta}.
\par
Corollary~\ref{cor:char-ri} shows that a density function $q\in\Delta_\N$ lies in
$\ri\left(F_{\Delta_\N}(p)\right)$ if and only if $F_{\Delta_\N}(p)=F_{\Delta_\N}(q)$.
By equation \eqref{eq:faceDelta} this is equivalent to $\supp(p)=\supp(q)$ and
\begin{equation}\label{eq:asymptote}
\sup_{n\in\supp(p)}\frac{r(n)}{p(n)}<\infty
\quad\iff\quad
\sup_{n\in\supp(p)}\frac{r(n)}{q(n)}<\infty
\qquad
\forall r\in\Delta_{\supp(p)}.
\end{equation}
To prove the equivalence of \eqref{eq:asymptote} and the conditions specifying the 
right-hand side of \eqref{eq:ri-faceDelta}, it suffices to assume $\supp(p)=\supp(q)$ 
and to prove that \eqref{eq:asymptote} is equivalent to
\begin{equation}\label{eq:qp}
\sup_{n\in\supp(p)}q(n)/p(n)<\infty
\quad\text{and}\quad
\sup_{n\in\supp(p)}p(n)/q(n)<\infty.
\end{equation}
If the first condition of \eqref{eq:qp} fails, then $r=q$ shows that
\eqref{eq:asymptote} fails. Similarly, if the second condition fails then $r=p$
shows that \eqref{eq:asymptote} fails. Conversely, if \eqref{eq:qp} is true, then
\[
\sup_{n\in\supp(p)}\frac{r(n)}{p(n)}
=\sup_{n\in\supp(p)}\frac{r(n)}{q(n)}\cdot \frac{q(n)}{p(n)}
\leq\sup_{n\in\supp(p)}\frac{r(n)}{q(n)}
\cdot\sup_{n\in\supp(p)}\frac{q(n)}{p(n)}
\]
proves the implications ``$\Leftarrow$'' of \eqref{eq:asymptote}.
Similarly, we prove the opposite implications.
\end{proof}
The face generated by a point in the intersection of two convex sets is easily 
described in terms of the individual sets.
\par
\begin{Pro}\label{pro:intersection}
Let $K,L\subseteq V$ be two convex sets and let $x\in K\cap L$. Then
\begin{enumerate}
\item
\parbox{5cm}{\raggedleft $F_{K\cap L}(x)$}
$=$
\parbox{5cm}{$F_K(x)\cap F_L(x)$,}
\item
\parbox{5cm}{\raggedleft $\ri\big(F_K(x)\cap F_L(x)\big)$}
$=$
\parbox{5cm}{$\ri\big(F_K(x)\big)\cap\ri\big(F_L(x)\big)$,}
\item
\parbox{5cm}{\raggedleft $\aff\big(F_K(x)\cap F_L(x)\big)$}
$=$
\parbox{5cm}{$\aff\big(F_K(x)\big)\cap\aff\big(F_L(x)\big)$.}
\end{enumerate}
\end{Pro}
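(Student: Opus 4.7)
The plan is to prove part 1) first and then deduce parts 2) and 3) from it via Corollary~\ref{cor:char-ri} and Theorem~\ref{thm:aff} respectively.

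For part 1), one inclusion is essentially formal: the set $F_K(x) \cap F_L(x)$ is convex, contains $x$, and is extreme in $K \cap L$ (if $y = (1-\lambda)u + \lambda v$ with $u,v \in K \cap L$ and $y \in F_K(x) \cap F_L(x)$, then $u,v \in F_K(x)$ because $F_K(x)$ is extreme in $K$, and similarly $u,v \in F_L(x)$), so $F_{K \cap L}(x) \subseteq F_K(x) \cap F_L(x)$. The reverse inclusion is where Theorem~\ref{thm:aff} does the work. Given $y \in F_K(x) \cap F_L(x)$, Theorem~\ref{thm:aff} supplies $\epsilon_K, \epsilon_L > 0$ with $x \pm \epsilon_K(y-x) \in F_K(x)$ and $x \pm \epsilon_L(y-x) \in F_L(x)$; setting $\epsilon = \min(\epsilon_K, \epsilon_L)$ and using convexity of each face gives $x \pm \epsilon(y-x) \in F_K(x) \cap F_L(x) \subseteq K \cap L$, whence Corollary~\ref{cor:facex} places $y$ in $F_{K \cap L}(x)$.

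With part 1) established, part 2) follows from Corollary~\ref{cor:char-ri}. For ``$\subseteq$'', if $z \in \ri(F_K(x) \cap F_L(x))$ then $F_{K \cap L}(z) = F_{K \cap L}(x)$ by Corollary~\ref{cor:char-ri} and part 1), so that $x \in F_{K \cap L}(z) \subseteq F_K(z)$; combined with the trivial $F_K(z) \subseteq F_K(x)$ this yields $F_K(z) = F_K(x)$, hence $z \in \ri(F_K(x))$, and symmetrically $z \in \ri(F_L(x))$. The inclusion ``$\supseteq$'' is the reverse bookkeeping: the equalities $F_K(z)=F_K(x)$ and $F_L(z)=F_L(x)$ given by Corollary~\ref{cor:char-ri} combine through part 1) into $F_{K \cap L}(z) = F_{K \cap L}(x)$, which by Corollary~\ref{cor:char-ri} places $z$ in $\ri(F_{K \cap L}(x))$. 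For part 3) one argues directly with equation~\eqref{eq:aff}: the inclusion ``$\subseteq$'' is immediate since any $w$ with $x \pm \epsilon(w-x) \in F_K(x) \cap F_L(x)$ automatically satisfies the witness condition for both $\aff F_K(x)$ and $\aff F_L(x)$, while for the converse, if $w \in \aff F_K(x) \cap \aff F_L(x)$, equation~\eqref{eq:aff} provides $\epsilon_K, \epsilon_L > 0$ with $x \pm \epsilon_K(w-x) \in F_K(x)$ and $x \pm \epsilon_L(w-x) \in F_L(x)$, and the same $\min$-and-convexity argument used in part 1) places $x \pm \epsilon(w-x)$ in $F_K(x) \cap F_L(x) = F_{K \cap L}(x)$, so that \eqref{eq:aff} gives $w \in \aff F_{K \cap L}(x)$.

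I expect the main obstacle to be the reverse inclusion in part 1), where one must upgrade two separate affine-reachability witnesses into a single witness lying in the intersection. Once this is settled by the elementary $\min(\epsilon_K, \epsilon_L)$ trick together with convexity of each individual face, parts 2) and 3) reduce to clean algebraic bookkeeping on top of Corollary~\ref{cor:char-ri} and equation~\eqref{eq:aff}.
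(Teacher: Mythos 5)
Your proof is correct, but it runs in the opposite direction from the paper's. The paper proves part 3) first, using equation \eqref{eq:affK} (the characterization of $\aff F_{K\cap L}(x)$ by the condition $x\pm\epsilon(y-x)\in K\cap L$, which splits into the two conditions for $K$ and $L$ by the same $\min(\epsilon_K,\epsilon_L)$ device you use); it then gets the inclusion ``$\subseteq$'' of part 2) from Lemma~\ref{lem:ri-convex-open} (the complement of $\ri(F_K(x))$ is an extreme subset of $F_K(x)$) combined with Lemma~\ref{lem:relint}, part 1); and finally part 1) falls out almost for free: $x\in\ri(F_K(x))\cap\ri(F_L(x))$ by Theorem~\ref{thm:aff}, hence $x\in\ri\big(F_K(x)\cap F_L(x)\big)$ by part 2), hence $F_K(x)\cap F_L(x)=F_{K\cap L}(x)$ by Lemma~\ref{lem:relint}, part 3). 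You instead attack part 1) head-on, upgrading the two witnesses from equation \eqref{eq:aff} into a single witness in the intersection and invoking Corollary~\ref{cor:facex} to land $y$ in $F_{K\cap L}(x)$, after which parts 2) and 3) reduce to transport of structure through Corollary~\ref{cor:char-ri} and equation \eqref{eq:aff}. Both routes rest entirely on Theorem~\ref{thm:aff} and are of comparable length; the paper's order spares it the explicit segment argument for part 1), while yours makes part 1) the workhorse and turns the remaining identities into clean bookkeeping. One small economy you could note: the inclusion $\aff\big(F_K(x)\cap F_L(x)\big)\subseteq\aff\big(F_K(x)\big)\cap\aff\big(F_L(x)\big)$ in part 3) holds for trivial reasons (the right-hand side is an affine space containing the intersection), so the witness argument is only needed for the reverse inclusion.
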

\begin{proof}
Part 3). We prove the equation by demonstrating the two inclusions
\begin{equation}\label{eq:affhull-ext}
\aff\big(F_{K\cap L}(x)\big)
\subseteq\aff\big(F_K(x)\cap F_L(x)\big)
\subseteq\aff\big(F_K(x)\big)\cap\aff\big(F_L(x)\big)
\end{equation}
and the equality of the first and third terms of \eqref{eq:affhull-ext}. The
latter follows from equation
\eqref{eq:affK}, according to which for all $y\in V$ we have
\begin{align*}
 &
y\in\aff\left(F_{K\cap L}(x)\right)\\
 \iff&
\exists\epsilon>0 : x\pm\epsilon(y-x)\in K\cap L\\
 \iff&
\left(\exists\epsilon_K>0 : x\pm\epsilon_K(y-x)\in K\right) \wedge
\left(\exists\epsilon_L>0 : x\pm\epsilon_L(y-x)\in L\right)\\
 \iff&
y\in\aff\left(F_K(x)\right)\cap\aff\left(F_L(x)\right).
\end{align*}
It is clear that $F=F_K(x)\cap F_L(x)$ is a face of $K\cap L$ containing $x$.
Hence $F_{K\cap L}(x)\subseteq F$, which implies the first inclusion
of~\eqref{eq:affhull-ext}. The second inclusion follows because $\aff(F)$
is the smallest affine space containing $F$.
\par
Part 2). The inclusion ``$\supseteq$'' follows from the definition of the relative
algebraic interior and from
$\aff(F)\subseteq\aff\left(F_K(x)\right)\cap\aff\left(F_L(x)\right)$ provided
in part 3). To prove the inclusion ``$\subseteq$'' it suffices to show
$\ri(F)\subseteq\ri\left(F_K(x)\right)$. Lemma~\ref{lem:ri-convex-open} shows that
$\ri\left(F_K(x)\right)$ is the complement of an extreme subset of $F_K(x)$. As
$F$ intersects $\ri\left(F_K(x)\right)$, namely in $x$, part 1) of
Lemma~\ref{lem:relint} shows that $\ri(F)\subseteq\ri\left(F_K(x)\right)$.
\par
Part 1). By Theorem~\ref{thm:aff}, the point $x$ lies in 
$\ri F_K(x)\cap \ri F_L(x)$. This and the inclusion ``$\supseteq$'' of part 2) imply 
that $x$ lies in the relative algebraic interior of the face $F$ of $K\cap L$. Thus, 
part 3) of Lemma~\ref{lem:relint} proves the claim.
\end{proof}
Note that the assertions of Proposition~\ref{pro:intersection} are simplified according to
the rules $F_L(x)=L=\ri(L)=\aff(L)$ if $L$ is an affine space (incident with $x$).
\par
%
%
\section{Convex Sets under Generalized Affine Constraints}
\label{sec:extreme-points}
We study sublevel sets and level sets of generalized affine maps on convex sets
and we analyze the dimensions of the faces generated by their points. In 
addition to general convex sets, we discuss the class of pyramids.
\par
As before, let $V$ be a real vector space and $K\subseteq V$ a convex subset. 
A map $f:K\to\R$ is called an {\em affine map} if
\begin{equation}\label{eq:affine}
f(\lambda x+\mu y)=\lambda f(x)+\mu f(y),
\qquad
x,y\in K,
\quad
\lambda,\mu\geq 0,
\quad \lambda+\mu=1.
\end{equation}
Consider the extended real line $\R\cup\{+\infty\}$ with the obvious ordering and 
arithmetics\footnote{%
We have $\alpha<+\infty$ for all $\alpha\in\R$. The addition is defined as 
$(+\infty)+(+\infty)=+\infty$ and $\alpha+(+\infty)=(+\infty)+\alpha=+\infty$ 
for all $\alpha\in\R$. The multiplication with non-negative scalars is defined 
as $0\cdot(+\infty)=(+\infty)\cdot 0=0$ and
$\lambda\cdot(+\infty)=(+\infty)\cdot\lambda=+\infty$ for all $\lambda>0$.}.
We call 
\[
f:K\to\R\cup\{+\infty\}
\]
a {\em generalized affine map on $K$} if $f$ satisfies the equation (\ref{eq:affine})
with the extended arithmetics. Let $\alpha\in\R$ and let
\begin{align}\label{eq:simple-const}
K_f &=\{x\in K: f(x)<+\infty\},\nonumber\\
K_f^\leq &=\{x\in K: f(x)\leq\alpha\}, && \text{(sublevel set)}\\
K_f^= &=\{x\in K: f(x)=\alpha\}. && \text{(level set)}\nonumber
\end{align}
If $\alpha\in\R$ is unspecified, we assume the sublevel set $K_f^\leq$ and 
level set $K_f^=$ are taken at the same value of $\alpha$.
\begin{Lem}\label{lem:GenFinite}
The set $K_f$ is a face of $K$.
\end{Lem}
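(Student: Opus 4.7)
The plan is to verify the two defining properties of a face directly from the definition of a generalized affine map: convexity and the extreme-set property.

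For convexity, I would take $x,y \in K_f$ and $\lambda \in [0,1]$. Since $f$ is generalized affine, $f((1-\lambda)x + \lambda y) = (1-\lambda)f(x) + \lambda f(y)$ in the extended arithmetic. Because $f(x)$ and $f(y)$ are both real (finite), the right-hand side is a real number (using the convention $0 \cdot \alpha = 0$ only when needed at the endpoints), so the convex combination lies in $K_f$. This part is routine.

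For the extreme-set property, I would suppose $x \in K_f$ and $x = (1-\lambda)y + \lambda z$ with $y,z \in K$ and $\lambda \in (0,1)$, and need to conclude that both $y$ and $z$ belong to $K_f$. Applying the generalized affine equation, $f(x) = (1-\lambda)f(y) + \lambda f(z)$. Here is the one place where the extended arithmetic does the work: since $1-\lambda > 0$ and $\lambda > 0$, the rules $\mu \cdot (+\infty) = +\infty$ for $\mu > 0$ and $(+\infty) + \alpha = +\infty$ force $f(x) = +\infty$ whenever either $f(y) = +\infty$ or $f(z) = +\infty$. Contrapositively, $f(x) < +\infty$ implies $f(y), f(z) < +\infty$, so $y,z \in K_f$.

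There is no real obstacle here; the only subtlety is being careful with the extended arithmetic, in particular noting that the strict positivity of both coefficients $\lambda$ and $1-\lambda$ is essential to rule out the value $+\infty$ on either side. Combining the two parts, $K_f$ is a convex extreme subset of $K$, hence a face.
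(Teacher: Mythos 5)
Your proposal is correct and follows essentially the same route as the paper's proof: convexity from finiteness of both endpoints, and the extreme-set property via the contrapositive, using the extended arithmetic rules $\mu\cdot(+\infty)=+\infty$ for $\mu>0$ and $(+\infty)+\alpha=+\infty$ to show that an infinite value at either endpoint forces an infinite value at the interior point. Your remark that strict positivity of both coefficients is the essential point matches the paper's restriction to $\lambda\in(0,1)$ in that step.
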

\begin{proof}
Let $x,y\in K$, $\lambda\in[0,1]$, and $z=(1-\lambda)x+\lambda y$ throughout the 
proof.
\par
To show that $K_f$ is convex, we assume that $x,y\in K_f$. Then $f(x)<+\infty$ and
$f(y)<+\infty$ gives
\[
f(z)
=f((1-\lambda)x+\lambda y)
=(1-\lambda) f(x)+\lambda f(y)
<+\infty,
\]
which means $z\in K_f$. To show that $K_f$ is an extreme set, it suffices to
show that $x\not\in K_f$ or $y\not\in K_f$ implies $z\not\in K_f$ for all
$\lambda\in(0,1)$. If $x\not\in K_f$, then $f(x)=+\infty$ yields
\[
f(z)
=f((1-\lambda)x+\lambda y)
=(1-\lambda) f(x)+\lambda f(y)
=+\infty+\lambda f(y)
=+\infty,
\]
which means $z\not\in K_f$. Similarly, $y\not\in K_f\implies z\not\in K_f$.
\end{proof}
\begin{Lem}\label{lem:sub-level}
The level set $K_f^=$ is a face of the sublevel set $K_f^\leq$.
\end{Lem}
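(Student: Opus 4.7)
The plan is to verify directly from the definitions that $K_f^=$ is a convex extreme subset of $K_f^\leq$. The crucial preliminary observation is that on $K_f^\leq$ the map $f$ takes only finite values, since $f(x) \leq \alpha < +\infty$ for every $x \in K_f^\leq$. Therefore on this set the generalized affinity relation \eqref{eq:affine} reduces to ordinary arithmetic in $\R$, and there is no need to worry about the extended addition/multiplication conventions.

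To show convexity of $K_f^=$, I would take $x,y \in K_f^=$ and $\lambda \in [0,1]$, so that $f(x)=f(y)=\alpha$. Applying \eqref{eq:affine} yields $f((1-\lambda)x+\lambda y)=(1-\lambda)\alpha+\lambda\alpha=\alpha$, which places the combination in $K_f^=$. To show extremity of $K_f^=$ inside $K_f^\leq$, suppose $z=(1-\lambda)x+\lambda y$ lies in $K_f^=$ for some $\lambda\in(0,1)$ and $x,y\in K_f^\leq$. By the preliminary observation, $f(x)$ and $f(y)$ are real numbers bounded above by $\alpha$, and by \eqref{eq:affine} we have
\[
\alpha \;=\; f(z) \;=\; (1-\lambda)f(x)+\lambda f(y).
\]
A convex combination of two numbers that are both $\leq \alpha$ can equal $\alpha$ only if each equals $\alpha$; hence $f(x)=f(y)=\alpha$, and $x,y\in K_f^=$.

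There is no real obstacle here: the argument is purely a one-line computation with an affine functional, and the restriction to the sublevel set $K_f^\leq$ automatically rules out the only potential subtlety, namely encountering $+\infty$ as a value of $f$. The only care needed is in the step where we conclude $f(x)=f(y)=\alpha$ from $(1-\lambda)f(x)+\lambda f(y)=\alpha$; this uses that both weights $1-\lambda$ and $\lambda$ are strictly positive, which is guaranteed by the extremity hypothesis $\lambda\in(0,1)$ and is the same positivity that makes the analogous statement work for ordinary affine functionals.
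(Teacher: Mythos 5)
Your proof is correct and follows essentially the same route as the paper's: convexity is the same one-line computation, and your extremity argument (a convex combination with strictly positive weights of two values $\leq\alpha$ equals $\alpha$ only if both values equal $\alpha$) is exactly the contrapositive of the paper's observation that $f(y)<\alpha$ or $f(z)<\alpha$ forces $f(x)<\alpha$. The preliminary remark that $f$ is finite on $K_f^\leq$ is a sensible explicit justification of a point the paper leaves implicit.
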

\begin{proof}
Let $\alpha\in\R$, let $x,y,z\in K_f^\leq$, let $\lambda\in[0,1]$, and let 
$x=(1-\lambda)y+\lambda z$. The level set $K_f^=$ is convex, as $f(x)=\alpha$ holds 
if $f(y)=f(z)=\alpha$. The set $K_f^=$ is an extreme subset of $K_f^\leq$ as
\[
f(x)=(1-\lambda)f(y)+\lambda f(z)<\alpha
\]
holds for all $\lambda\in(0,1)$ if $f(y)<\alpha$ or $f(z)<\alpha$.
\end{proof}
Let $\N_0=\{0,1,2,\ldots\}$.
\par
\begin{Thm}\label{thm:ExtConst}
Let $f:K\to\R\cup\{+\infty\}$ be a generalized affine map. Let $C$ denote a 
sublevel set $C=K_f^\leq$ or a level set $C=K_f^=$ and let $x$ be a point in 
$C$. If the face $F_C(x)$ of $C$ generated by $x$ has dimension $m\in\N_0$, 
then the face $F_K(x)$ of $K$ generated by $x$ has dimension $m$ or $m+1$.
\end{Thm}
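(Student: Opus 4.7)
The plan is to let $F:=F_K(x)$ and exploit the fact that $f$ restricts to a finite-valued affine function on $F$, then read off $\dim F_C(x)$ from the level-set structure of $f|_F$.

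First I would check that $F\cap C$ is a face of $C$ containing $x$, so that $F_C(x)\subseteq F\cap C\subseteq F$ and in particular $m\leq\dim F$. Convexity of $F\cap C$ is clear; its extremality in $C$ follows from the extremality of $F$ in $K$, since any convex combination of points of $C\subseteq K$ landing in $F$ must have both endpoints in $F$. Next I would invoke Lemma \ref{lem:GenFinite} to place $F$ inside $K_f$ (as $K_f$ is a face of $K$ containing $x$), so that $f|_F$ is a bona fide affine function and extends uniquely to an affine function on $\aff F$. Throughout, the two standing tools are $x\in\ri F$ (Theorem \ref{thm:aff}) and Corollary \ref{cor:facex}, which characterizes $y\in F_C(x)$ by the existence of $z\in C$ with $x$ on the open segment $(y,z)$.

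The argument then splits into cases according to $f|_F$. If $f|_F$ is constant it equals $f(x)$, which is $\leq\alpha$ or $=\alpha$ depending on whether $C$ is a sublevel set or a level set; either way $F\subseteq C$, so $F$ is itself a face of $C$ containing $x$, forcing $F=F_C(x)$ and $\dim F=m$. If $f|_F$ is non-constant and $C=K_f^\leq$ with $f(x)<\alpha$, then for any $y\in F$ one can choose $\epsilon>0$ small enough that $z:=x-\epsilon(y-x)\in F$ and $f(z)=f(x)+\epsilon(f(x)-f(y))\leq\alpha$, making $z\in C$ and placing $y$ in $F_C(x)$; again $F=F_C(x)$ and $\dim F=m$.

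The remaining subcase, $f(x)=\alpha$ with $f|_F$ non-constant, covers both level and sublevel sets and is where the extra dimension enters. Let $H:=\{y\in\aff F:f(y)=\alpha\}$; since the extension of $f$ to $\aff F$ is non-constant, $H$ is an affine hyperplane of $\aff F$ of dimension $\dim F-1$. For $y\in H\cap F$ the same construction gives $z:=x-\epsilon(y-x)\in F$ with $f(z)=\alpha$, hence $z\in C$, so $y\in F_C(x)$. Since $x\in\ri F$ lies in $H$, for any $y\in H$ a small $\epsilon>0$ yields $x\pm\epsilon(y-x)\in F\cap H$, whence $\aff(H\cap F)=H$ and $\dim(H\cap F)=\dim F-1$. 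Combining, $m=\dim F_C(x)\geq\dim(H\cap F)=\dim F-1$, i.e.\ $\dim F\leq m+1$. The main obstacle is organizing the case split cleanly; the crucial substantive point is this last dimension count, which relies on $x\in\ri F$ to guarantee that the affine hull of $H\cap F$ fills out all of $H$.
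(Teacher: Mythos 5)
Your overall strategy is sound and genuinely different in organization from the paper's. The paper argues by contradiction: assuming $\dim F_K(x)\geq m+2$, it cuts $F_K(x)$ with an $(m+2)$-dimensional affine slice $A$ through $x$, extends $f$ to an affine map $g$ on $A$, and uses the kernel direction space $L=\{v:g(\cdot+v)=g\}$ — i.e.\ the level set of $g$ through $x$ itself, at the value $f(x)$ — which lands inside $C$ whether $C$ is a sublevel or a level set, so no case split on $f(x)$ versus $\alpha$ is needed. You instead work with all of $F:=F_K(x)$ and slice by the hyperplane $H=g^{-1}(\alpha)$, which forces the trichotomy on $f|_F$ and on whether $f(x)<\alpha$ or $f(x)=\alpha$. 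Your decisive subcase ($f(x)=\alpha$, $f|_F$ non-constant) is correct: $H\cap F\subseteq F_C(x)$ via Corollary~\ref{cor:facex}, and $\aff(H\cap F)=H$ because $x\in\ri(F)$ lies in $H$, giving $\dim F-1\leq m$. The constant subcase is also fine.

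There is, however, a false step in the subcase $C=K_f^\leq$, $f(x)<\alpha$, $f|_F$ non-constant: you conclude that every $y\in F$ lies in $F_C(x)$ and hence $F=F_C(x)$. This cannot be right, since $F_C(x)\subseteq C$ while $F$ may contain points with $f>\alpha$. Concretely, take $K=[0,1]^2$, $f(u,v)=v$, $\alpha=\tfrac12$, $x=(\tfrac12,\tfrac14)$: then $F=F_K(x)=K$ but $F_C(x)=C=[0,1]\times[0,\tfrac12]\neq K$, and $y=(\tfrac12,1)\in F$ is not in $C$, so Corollary~\ref{cor:facex} cannot place it in $F_C(x)$ — you verified $z\in C$ but not $y\in C$. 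The conclusion you actually need, $\dim F=m$, still holds and is recovered by passing to affine hulls: for $y\in F$ and $\epsilon>0$ small, \emph{both} points $x\pm\epsilon(y-x)$ lie in $F$ and satisfy $f(x\pm\epsilon(y-x))=f(x)\pm\epsilon(f(y)-f(x))<\alpha$, hence lie in $C$; by equation \eqref{eq:affK} applied to $C$ (or by Corollary~\ref{cor:facex} applied to the segment $[x-\epsilon(y-x),\,x+\epsilon(y-x)]$) this gives $y\in\aff F_C(x)$, so $\aff F\subseteq\aff F_C(x)$ and $\dim F\leq m$. With that local repair the proof is complete.
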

\begin{proof}
The inclusion $F_K(x)\supseteq F_C(x)$ holds as $F_K(x)\cap C$ is a face of $C$,
and provides the lower bound of $\dim F_K(x)\geq m$.
\par
Assume that $F_K(x)$ has dimension $m+2$ or larger. We may choose an affine 
subspace $A\subseteq\aff F_K(x)$ of dimension $m+2$ incident with $x$. 
As $K_f$ is an extreme subset of $K$ by Lemma~\ref{lem:GenFinite}
and as $x\in K_f$, Corollary~\ref{cor:extreme} proves $F_K(x)\subseteq K_f$. 
This means that $f$ has finite values on the convex set
\[
X=A\cap F_K(x).
\]
As $x$ lies in the relative algebraic interior of $F_K(x)$ by 
Theorem~\ref{thm:aff} and as $x\in A\subseteq\aff F_K(x)$,
Proposition~\ref{pro:intersection} shows $x\in\ri(X)$ and $\aff(X)=A$.
\par
We extend the affine map $f|_X:X\to\R$ to an affine map $g:A\to\R$, and consider
the linear subspace
\[
L=\{v\in\lin(A): g(y+v)=g(y)\;\forall y\in A\}
\]
of the translation vector space $\lin(A)=\{y-x:y\in A\}$. The space $L$ has 
codimension at most one, so $\dim(L)\geq m+1$. Since $x\in\ri(X)$, 
Lemma~\ref{lem:relint} shows that $x$ generates $X=F_X(x)$ as a face of $X$. 
Then Proposition~\ref{pro:intersection} shows 
\[
x\in\ri(X\cap(x+L))
\quad\text{and}\quad 
\aff(X\cap(x+L))=x+L. 
\]
As $X\cap(x+L)\subseteq C$ and as $x\in\ri(X\cap(x+L))$, Lemma~\ref{lem:relint}, 
part 1), provides the inclusion $X\cap(x+L)\subseteq F_C(x)$. Taking affine hulls, 
we get $x+L\subseteq\aff(F_C(x))$. This proves $\dim(F_C(x))>m$, which completes 
the proof.
\end{proof}
We study intersections of sublevel and level sets. Let $u\subset\N$ be a finite subset, 
let $f_k:K\to\R\cup\{+\infty\}$ be a generalized affine map, and let $\alpha_k\in\R$ 
for all $k\in u$. For any subset $s\subseteq u$ we study the intersection 
\begin{equation}\label{eq:const-multiple}
K_u^s
=\{x\in K: f_k(x)\leq\alpha_k\;\forall k\in u\setminus s
\;\text{and}\; f_k(x)=\alpha_k\;\forall k\in s \}
\end{equation}
of $|u|-|s|$ sublevel sets and $|s|$ level sets. If $(\alpha_k)_{k\in u}$ is 
unspecified and $s,t\subseteq u$, we assume the intersections of sublevel
and level sets $K_u^s$ and $K_u^t$ are taken at the same values of 
$(\alpha_k)_{k\in u}$.
\begin{Lem}\label{lem:sub-level-mixed}
If $t\subseteq s\subseteq u$, then $K_u^s$ is a face of 
$K_u^t$.
\end{Lem}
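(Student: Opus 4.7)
The plan is to present $K_u^s$ as an intersection of faces of $K_u^t$ and then to invoke the fact that an arbitrary intersection of faces is a face.

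First, I would note that $K_u^t$ is a convex subset of $K$, being the intersection of sublevel sets and level sets of generalized affine maps (their convexity is verified inside the proofs of Lemmas~\ref{lem:GenFinite} and~\ref{lem:sub-level}). Hence the restriction $f_k|_{K_u^t}$ is a generalized affine map on a convex set, and Lemma~\ref{lem:sub-level} is available with $K_u^t$ playing the role of the ambient convex set.

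For each $k\in s\setminus t$ one has $k\in u\setminus t$, so $f_k(x)\leq\alpha_k$ holds on all of $K_u^t$; consequently the sublevel set of $f_k|_{K_u^t}$ at $\alpha_k$ coincides with $K_u^t$ itself. Lemma~\ref{lem:sub-level} then yields that the level set $\{x\in K_u^t : f_k(x)=\alpha_k\}$ is a face of $K_u^t$. A straightforward unpacking of the definition \eqref{eq:const-multiple} identifies $K_u^s$ with the intersection of these level sets over $k\in s\setminus t$: the equality constraints for $k\in t$ and the sublevel constraints for $k\in u\setminus s$ are inherited from being in $K_u^t$, while the sublevel constraints for $k\in s\setminus t$ are sharpened to equalities by the extra intersections. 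Since every intersection of faces is a face, $K_u^s$ is a face of $K_u^t$.

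There is no real obstacle in this plan; the only subtlety is the mild change of ambient set when applying Lemma~\ref{lem:sub-level} --- the map $f_k$ is originally defined on $K$, but we use it as a generalized affine map on the convex subset $K_u^t$. An alternative route would be to induct on $|s\setminus t|$, at each step passing from $K_u^{s'}$ to $K_u^{s'\cup\{k_0\}}$ via a single application of Lemma~\ref{lem:sub-level} and composing using the transitivity of the ``is a face of'' relation, but the direct intersection argument is both shorter and more symmetric.
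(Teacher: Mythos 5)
Your proof is correct, and it takes a mildly different route from the paper's. The paper proves the one-step case $s=t\cup\{k\}$ by applying Lemma~\ref{lem:sub-level} inside the auxiliary ambient set $C_t^t$ with $C=K_{u\setminus s}^\emptyset$, identifying $(C_t^t)_{f_k}^=$ with $K_u^s$ and $(C_t^t)_{f_k}^\leq$ with $K_u^t$, and then handles general $t\subseteq s$ by induction together with the transitivity of the face relation (faces of faces are faces). You instead work entirely inside $K_u^t$: the observation that for $k\in s\setminus t\subseteq u\setminus t$ the constraint $f_k\leq\alpha_k$ already holds on all of $K_u^t$, so that the sublevel set of $f_k|_{K_u^t}$ degenerates to $K_u^t$ itself and Lemma~\ref{lem:sub-level} directly exhibits each level set $\{x\in K_u^t : f_k(x)=\alpha_k\}$ as a face of $K_u^t$, lets you conclude in one step, since $K_u^s$ is the intersection of these faces and arbitrary intersections of faces of a convex set are faces (as noted in Section~\ref{sec:generated-face}). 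Both arguments have Lemma~\ref{lem:sub-level} as the sole substantive ingredient; yours trades the induction for closure of faces under intersection and is arguably more direct and symmetric, while the paper's one-constraint-at-a-time formulation runs structurally parallel to the induction used later in Corollary~\ref{cor:ExtConst-mult}. The degenerate case $s=t$ is harmless in your version, the empty intersection being $K_u^t$ itself.
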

\begin{proof}
Let $t\subseteq u$, $k\in u\setminus t$, and $s=t\cup\{k\}$. Lemma~\ref{lem:sub-level} 
shows that the intersection $(C_t^t)_{f_k}^=$ of level sets is a face of 
$(C_t^t)_{f_k}^\leq$ for all convex subsets $C\subseteq K$, that is to say, $C_s^s$ is 
a face of $C_s^t$. If $C$ is the intersection $C=K_{u\setminus s}^\emptyset$ of 
sublevel sets, it follows that $(K_{u\setminus s}^\emptyset)_s^s$ is a face of 
$(K_{u\setminus s}^\emptyset)_s^t$. In other words, $K_u^s$ is a face of $K_u^t$. The 
general case follows by induction as faces of faces of a convex set are faces of the 
convex set.
\end{proof}
It is useful to iterate Theorem~\ref{thm:ExtConst}.
\par
\begin{Cor}\label{cor:ExtConst-mult}
Let $\ell\in\N$, let $u=\{1,2,\ldots,\ell\}$, let $s\subseteq u$, and let $x$ be a 
point in the intersection $K_u^s$ of sublevel and level sets from equation 
\eqref{eq:const-multiple}. If the face $F_{K_u^s}(x)$ of $K_u^s$ generated by $x$ 
has dimension $m\in\N_0$, then the dimension of the face $F_K(x)$ of $K$ generated 
by $x$ belongs to the set $\{m,m+1,\ldots,m+\ell\}$.
\end{Cor}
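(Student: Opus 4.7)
The plan is to prove Corollary~\ref{cor:ExtConst-mult} by induction on $\ell$, peeling off one constraint at a time and applying Theorem~\ref{thm:ExtConst} at each step.

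For the base case $\ell=0$, the index set $u$ is empty, so $K_u^s=K$ and the claim is trivial. (Alternatively, $\ell=1$ is a direct restatement of Theorem~\ref{thm:ExtConst}.)

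For the inductive step, assume the result for $\ell-1$ and consider the case of $\ell$ constraints. Set $u'=\{1,\ldots,\ell-1\}$ and $s'=s\cap u'$. Observe that
\[
K_u^s=\begin{cases}(K_{u'}^{s'})_{f_\ell}^= & \text{if }\ell\in s,\\[.3ex](K_{u'}^{s'})_{f_\ell}^\leq & \text{if }\ell\notin s,\end{cases}
\]
so in either case $K_u^s$ is a level set or sublevel set of the generalized affine map obtained by restricting $f_\ell$ to the convex set $K_{u'}^{s'}$. (Such a restriction is again generalized affine.) Applying Theorem~\ref{thm:ExtConst} with $K$ replaced by $K_{u'}^{s'}$, $f$ replaced by $f_\ell|_{K_{u'}^{s'}}$, and $C=K_u^s$ yields
\[
\dim F_{K_{u'}^{s'}}(x)\in\{m,m+1\}.
\]
By the induction hypothesis applied to $K_{u'}^{s'}$, which is an intersection of $\ell-1$ sublevel and level sets inside $K$, the dimension $\dim F_K(x)$ lies in the set $\{m',m'+1,\ldots,m'+(\ell-1)\}$, where $m'=\dim F_{K_{u'}^{s'}}(x)\in\{m,m+1\}$. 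Combining the two possibilities for $m'$ gives $\dim F_K(x)\in\{m,m+1,\ldots,m+\ell\}$, as claimed.

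There is no real obstacle here; the only subtlety worth flagging is that the induction hypothesis must be applied inside the ambient convex set $K$ (not only relative to $K_{u'}^{s'}$), so one uses it to relate $\dim F_K(x)$ to $\dim F_{K_{u'}^{s'}}(x)$ via the same corollary one layer down. Lemma~\ref{lem:sub-level-mixed} guarantees that the intermediate sets $K_{u'}^{s'}$ are legitimate faces-of-sublevel-sets on which $f_\ell$ still makes sense as a generalized affine map, so the inductive peeling is well defined at every stage.
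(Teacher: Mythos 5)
Your proof is correct and takes the same route as the paper, whose entire proof is the one-line remark that the corollary ``follows from Theorem~\ref{thm:ExtConst} by induction''; you have simply filled in the details of that induction (peeling off the constraint $f_\ell$, applying Theorem~\ref{thm:ExtConst} with ambient set $K_{u'}^{s'}$, and combining the two cases $m'\in\{m,m+1\}$ with the induction hypothesis), and the bookkeeping checks out.
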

\begin{proof}
This follows from Theorem~\ref{thm:ExtConst} by induction.
\end{proof}
Corollary~\ref{cor:ExtConst-mult} allows us to exploit gaps in the list of the 
dimensions of the faces of $K$.
\par
\begin{Cor}\label{cor:gaps}
Let $m,n\in\N_0$ such that $n>m$, let $M\doteq\{m,m+1,\ldots,n\}$, and let 
$D\subseteq M$. Let $\ell\in\N$ such that $\ell\leq n-m$, let 
$u=\{1,2,\ldots,\ell\}$, let $s\subseteq u$, and let $K_u^s$ be the intersection of 
(sub-) level sets from equation \eqref{eq:const-multiple}.
\begin{enumerate}
\item
If $K$ has no face with dimension in $M\setminus D$ and if the face $F_{K_u^s}(x)$
of $K_u^s$ generated by a point $x\in K_u^s$ has dimension in 
$\{m,m+1,\ldots,n-\ell\}$, then $\,\dim F_K(x)\in D$. 
\item
If $K$ has no face with dimension in $M$, then $K_u^s$ has no face with 
dimension in $\{m,m+1,\ldots,n-\ell\}$.
\end{enumerate}
\end{Cor}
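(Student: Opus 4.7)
The plan is to deduce both parts from Corollary~\ref{cor:ExtConst-mult} by straightforward interval arithmetic, with a small extra step in Part~2 to pass from an arbitrary face to one generated by a point.

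For Part~1, I would set $d=\dim F_{K_u^s}(x)$, so that $d\in\{m,m+1,\ldots,n-\ell\}$ by hypothesis. Corollary~\ref{cor:ExtConst-mult} then places $\dim F_K(x)$ in $\{d,d+1,\ldots,d+\ell\}$. The bounds $d\geq m$ and $d+\ell\leq(n-\ell)+\ell=n$ show that this set is contained in $M=\{m,m+1,\ldots,n\}$. Since by assumption $K$ has no face whose dimension lies in $M\setminus D$, the dimension $\dim F_K(x)$ must lie in $D$, as claimed. The two constraints $\ell\leq n-m$ and $d\leq n-\ell$ are exactly what is needed for the interval $\{d,\ldots,d+\ell\}$ to fit inside $M$, so the arithmetic is tight.

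For Part~2, I would argue by contradiction, reducing to Part~1 with $D=\emptyset$ (so that $M\setminus D=M$ matches Part~2's hypothesis). Assume $F$ is a face of $K_u^s$ with dimension $d\in\{m,\ldots,n-\ell\}$. Since $d$ is a finite non-negative integer, $F$ is a finite-dimensional convex set; by the standard finite-dimensional fact that every nonempty such set has nonempty relative algebraic interior, one can choose $x\in\ri(F)$. Corollary~\ref{cor:char-ri} then yields $F_{K_u^s}(x)=F$, whose dimension is $d\in\{m,\ldots,n-\ell\}$. Applying Part~1 with $D=\emptyset$ would force $\dim F_K(x)\in\emptyset$, which is absurd.

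I do not expect a substantive obstacle; the proof is essentially bookkeeping on top of Corollary~\ref{cor:ExtConst-mult}. The one point that requires mild care is the nonemptiness of $\ri(F)$ in Part~2, which must be justified by finite-dimensionality rather than invoked directly from the excerpt, given the infinite-dimensional pathologies exhibited in Examples~\ref{exa:unipol} and~\ref{exa:discrete-pms}: without finite-dimensionality one could not guarantee a point $x$ with $F_{K_u^s}(x)=F$, and the contradiction via Part~1 would break down.
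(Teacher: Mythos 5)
Your proposal is correct and follows essentially the same route as the paper: Part~1 is the same interval bookkeeping on top of Corollary~\ref{cor:ExtConst-mult}, and Part~2 is the same reduction to Part~1 with $D=\emptyset$ via a point in the nonempty relative algebraic interior of a finite-dimensional face (the paper cites Theorems~6.2 and~11.6 of \cite{Rockafellar1970} for that nonemptiness and uses Lemma~\ref{lem:relint}, part~3, where you use the equivalent Corollary~\ref{cor:char-ri}). Your remark that finite-dimensionality is essential here, in view of the infinite-dimensional pathologies, is exactly the right point of care.
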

\begin{proof}
Part 1). Let $x$ be a point in $K_u^s$ and let the dimension of the face $F_{K_u^s}(x)$ 
belong to the set $\{m,m+1,\ldots,n-\ell\}$. Corollary~\ref{cor:ExtConst-mult} shows 
that the dimension of the face $F_K(x)$ belongs to $M$, which implies 
$\,\dim\,F_K(x)\in D$ by the assumptions.
\par
Part 2). Let $F$ be a face of $K_u^s$ and let $\,\dim(F)\in\{m,m+1,\ldots,n-\ell\}$. 
As $F$ has finite dimension $m\geq 0$, the relative algebraic interior $\ri(F)$ 
contains a point $x$, see Theorem 6.2 and Theorem 11.6 of \cite{Rockafellar1970}.
Part 3) of Lemma~\ref{lem:relint} proves $F=F_{K_u^s}(x)$ and part 1) of the present 
corollary, with $D=\emptyset$, shows $\,\dim F_K(x)\in\emptyset$. This is a 
contradiction.
\end{proof}
Corollary~\ref{cor:gaps}, part 1), is simplified as follows if $m=0$,
$n=\ell$, and $D=\{0\}$.
\par
\begin{Cor}\label{cor:easy-gaps}
Let $\ell\in\N$, let $u=\{1,\ldots,\ell\}$, and let $K$ have no face with dimension in 
$u$. Then every extreme point of the intersection $K_u^s$ of sublevel and level sets 
is an extreme point of $K$ for all $s\subseteq u$.
\end{Cor}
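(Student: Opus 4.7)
The plan is to deduce this statement as the special case of Corollary~\ref{cor:gaps}, part 1), obtained by setting $m=0$, $n=\ell$, and $D=\{0\}$. First I would verify that, with these choices, the parameter bookkeeping works out: one has $M=\{0,1,\ldots,\ell\}$, so $M\setminus D=\{1,2,\ldots,\ell\}=u$, which matches the hypothesis that $K$ has no face of dimension in $u$; and the admissible dimension range $\{m,m+1,\ldots,n-\ell\}$ collapses to the single value $\{0\}$.

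Next I would fix an arbitrary subset $s\subseteq u$ and an arbitrary extreme point $x$ of $K_u^s$. Since the singleton $\{x\}$ is then a face of $K_u^s$ containing $x$, it must coincide with the generated face $F_{K_u^s}(x)$, which therefore has dimension $0$ and hence lies in the allowed range $\{0\}$. Invoking Corollary~\ref{cor:gaps}, part 1), immediately yields $\dim F_K(x)\in D=\{0\}$, so $F_K(x)=\{x\}$, meaning that $x$ is an extreme point of $K$.

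I do not anticipate any real obstacle: the corollary is essentially a clean specialization of the preceding one, and the only task is to match the parameters correctly. The statement is worth isolating because it captures the most transparent and most useful instance of the gap principle; forward-looking, it will be applied in Section~\ref{sec:extreme-points-quantum} to the set of quantum states, whose list of face dimensions has a gap between $0$ and $3$, so that up to two generalized affine constraints suffice to force every extreme point of the constrained set to be a pure state.
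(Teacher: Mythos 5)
Your proof is correct and is precisely the paper's intended argument: the statement is presented there as the specialization of Corollary~\ref{cor:gaps}, part 1), to $m=0$, $n=\ell$, $D=\{0\}$, with no further proof given. Your parameter bookkeeping and the observation that an extreme point $x$ of $K_u^s$ generates the zero-dimensional face $F_{K_u^s}(x)=\{x\}$ are exactly what is needed.
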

Corollary~\ref{cor:easy-gaps} is optimal in the sense that if $K$ (or one of its faces) 
has dimension $\ell$, then there are affine functionals and a point $x\in K$
such that $x$ is an extreme point of $K_u^s$ for all $s\subseteq u$, but not an extreme 
point of $K$. For example, the origin is an extreme point of the set
\[
K_u^s=\{(x_1,\ldots,x_\ell)\in\R^\ell : x_k\leq 0\;\forall k\in u\setminus s
\;\text{and}\; x_k=0\;\forall k\in s \}
\]
for all subsets $s\subseteq u$, but not an extreme point of $K=\R^\ell$. 
\par
We apply the results to pyramids. Let $o\in V$ be a point outside of the affine hull
of $K$. The {\em pyramid} with apex $o$ over a nonempty subset $F\subseteq K$  
is the union of all closed segments joining points in $F$ with $o$,
\[
\P(F,o)=\bigcup_{x\in F}[x,o].
\]
In addition, we define $\P(\emptyset,o)=\{o\}$. We frequently write $\P(F)$ instead of
$\P(F,o)$. Note that the pyramid over a convex subset $F\subseteq K$ is the convex hull
of $F\cup\{o\}$. For every $x\in\P(K,o)\setminus\{o\}$ we denote by $\hat{x}$ the
point of $K$ that is incident with the line through $o$ and $x$.
\par
\begin{Lem}\label{lem:FacesPyr}
The set of faces of $\P(K,o)$ is the union of the set $\cF_1$ of faces of $K$ and
the set of pyramids $\cF_2=\{\P(F,o) : F\in\cF_1\}$.
\end{Lem}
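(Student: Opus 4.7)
The plan is to prove the two inclusions $\cF_1\cup\cF_2\subseteq\{\text{faces of }\P(K,o)\}$ and $\{\text{faces of }\P(K,o)\}\subseteq\cF_1\cup\cF_2$ separately. The key technical tool throughout is the observation that, since $o\notin\aff(K)$, every point $y\in\P(K,o)$ admits a unique representation of the form $y=(1-s)\hat y+s\,o$ with $s\in[0,1]$ and $\hat y\in K$ whenever $s<1$ (the case $s=1$ corresponds to $y=o$). Uniqueness is obtained by cancelling $o$ against an affine combination of points in $K$ and invoking $o\notin\aff(K)$. I will use this decomposition to separate the ``base part'' from the ``apex part'' of any point of $\P(K,o)$.

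For the first inclusion I take $F\in\cF_1$, a point $x\in F$, and a representation $x=(1-\lambda)y+\lambda z$ with $y,z\in\P(K,o)$, $\lambda\in(0,1)$. Decomposing $y=(1-s)\hat y+s\,o$ and $z=(1-t)\hat z+t\,o$ yields
\[
x=(1-\lambda)(1-s)\hat y+\lambda(1-t)\hat z+\bigl((1-\lambda)s+\lambda t\bigr)o.
\]
Since $x\in K$, uniqueness of the decomposition forces $(1-\lambda)s+\lambda t=0$, hence $s=t=0$ and $y,z\in K$. Extremeness of $F$ in $K$ then places $y,z\in F$. For $\P(F,o)\in\cF_2$ with $x\in\P(F,o)$ and the same decomposition, if $x=o$ one checks directly that $y=z=o\in\P(F,o)$; otherwise the normalized combination $\hat x=\tfrac{(1-\lambda)(1-s)\hat y+\lambda(1-t)\hat z}{(1-\lambda)(1-s)+\lambda(1-t)}$ is the unique $K$-part of $x$ and therefore lies in $F$, and extremeness of $F$ in $K$ places $\hat y,\hat z\in F$ whenever their weights are positive, so $y=(1-s)\hat y+s\,o$ and $z=(1-t)\hat z+t\,o$ lie in $\P(F,o)$.

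For the converse inclusion I split on whether the apex $o$ belongs to the face $G$ of $\P(K,o)$. If $o\notin G$, no point $x\in G$ can have $s>0$ in its decomposition, for otherwise $x$ lies on the open segment $(\hat x,o)$ of two points of $\P(K,o)$ and extremeness of $G$ would force $o\in G$. Hence $G\subseteq K$, and since $K\subseteq\P(K,o)$, extremeness of $G$ in $\P(K,o)$ descends to extremeness of $G$ in $K$, giving $G\in\cF_1$. If $o\in G$, I set $F\doteq G\cap K$. Extremeness of $G$ restricted to convex combinations of points in $K$ shows $F$ is a face of $K$; the inclusion $\P(F,o)\subseteq G$ follows from convexity of $G$ together with $F\cup\{o\}\subseteq G$, while for the reverse inclusion any $x\in G\setminus\{o\}$ lies on the open segment $(\hat x,o)$, so extremeness of $G$ delivers $\hat x\in G\cap K=F$ and hence $x\in\P(F,o)$.

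The main obstacle I anticipate is the case analysis in the extremeness argument for $\P(F,o)$: when one of the coefficients $(1-\lambda)(1-s)$ or $\lambda(1-t)$ vanishes the normalized combination defining $\hat x$ is not formed, and when $(1-\lambda)s+\lambda t=1$ one has $x=o$, so these degenerate subcases must be checked separately to confirm that the corresponding $y,z$ still lie in $\P(F,o)$.
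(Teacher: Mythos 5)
Your proposal is correct and follows essentially the same route as the paper's proof: both rely on the unique apex--base decomposition of points of $\P(K,o)$ (the paper's $p=(1-\eta)x+\eta o$), match coefficients of $o$ using $o\notin\aff(K)$ to show $K$ and $\{o\}$ are extreme and to force $s=t=0$, and handle the same degenerate subcases ($\mu_i=1$, i.e.\ vanishing base weight) when verifying that $\P(F,o)$ is extreme. The converse direction (splitting on $o\in G$ versus $o\notin G$ and setting $F=G\cap K$) is likewise identical to the paper's argument.
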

\begin{proof}
Let $G$ be a face of $\P(K)$. First, we show $o\not\in G\Rightarrow G\in\cF_1$.
As $G$ is an extreme set, $x\in G$ and $x\not\in K\cup\{o\}$ imply $o,\hat{x}\in G$.
On the contrapositive, if $o\not\in G$ then $G\subseteq K$. As $G$ is a face of
$\P(K)$ it is {\em a fortiori} a face of $K$. This shows $G\in\cF_1$.
Secondly, we show $o\in G\Rightarrow G\in\cF_2$. It is easy to see that $G$ is
the pyramid over some subset $F\subseteq K$. Indeed, with any point $x\in K$ the
convex set $G$ contains also the segment $[x,o]$. Moreover, if $G$ contains a point
$x\not\in K\cup\{o\}$, then $o,\hat{x}\in G$. This proves $G=\P(F)$ for some subset
$F\subseteq K$. Since $F=G\cap K$, the set $F$ is a face of $K$. This shows
$G\in\cF_2$.
\par
Each element of $\cF_1\cup\cF_2$ is a face. Let $p,p_1,p_2$ be any three points
in the pyramid $\P(K)$ such that $p$ lies in the open segment $(p_1,p_2)$. We may
write $p=(1-\lambda)p_1+\lambda p_2$, where
\[
p=(1-\eta)x+\eta o
\qquad\text{and}\qquad
p_i=(1-\mu_i)x_i+\mu_i o,
\quad i=1,2,
\]
$x,x_1,x_2\in K$, $\eta,\mu_1,\mu_2\in[0,1]$, and $\lambda\in(0,1)$. Then
\begin{equation}\label{eq:3points}
(1-\eta) x+\eta o=(1-\lambda)(1-\mu_1)x_1+\lambda(1-\mu_2)x_2
+((1-\lambda)\mu_1+\lambda\mu_2)o.
\end{equation}
As $x,x_1,x_2\in\aff(K)$ and $o\not\in\aff(K)$, equation \eqref{eq:3points} shows
$\eta=(1-\lambda)\mu_1+\lambda\mu_2$. The convex sets $K$ and $\{o\}$ are extreme
subsets (and hence faces) of $\P(K)$ as they correspond to the extreme values
$\eta=0$ and $\eta=1$, which require $\mu_1=\mu_2=0$ and $\mu_1=\mu_2=1$,
respectively. That $K$ is a face of $\P(K)$ implies that every face of $K$ is a
face of $\P(K)$, too. Let us show that $\P(F)$ is an extreme subset of $\P(K)$ for
all faces $F$ of $K$. Let $p\in\P(F)$. We may assume $\eta<1$ as $o$ is an
extreme point. The equation \eqref{eq:3points} simplifies then to
\begin{equation}\label{eq:other3points}
x=\tfrac{(1-\lambda)(1-\mu_1)}{1-\eta}x_1+\tfrac{\lambda(1-\mu_2)}{1-\eta}x_2.
\end{equation}
If $\mu_1=1$, then $x_2=x$ follows and hence $p_2\in[x,o]\subseteq\P(F)$. Similarly,
$\mu_2=1$ implies $p_1\in\P(F)$. If $\mu_1<1$ and $\mu_2<1$, then
\eqref{eq:other3points} shows that $x\in(x_1,x_2)$. As $x\in F$ and as $F$ is an
extreme subset of $K$, we obtain $x_1,x_2\in F$, hence $p_1,p_2\in\P(F)$. This
proves that $\P(F)$ is a face of $\P(K)$.
\end{proof}
By Lemma~\ref{lem:FacesPyr}, the face of the pyramid $\P(K,o)$ generated by a point is
\begin{equation}\label{eq:face-pyr}
F_{\P(K,o)}(x) = \left\{
\begin{array}{ll}
\{o\} & \text{if $x=o$},\\
F_K(x) &  \text{if $x\in K$},\\
\P(F_K(\hat{x}),o) & \text{else},
\end{array}\right.
\qquad \text{for all $x\in\P(K,o)$.}
\end{equation}
Equation~\eqref{eq:face-pyr} allows us to simplify Corollary~\ref{cor:ExtConst-mult} 
when applied to pyramids.
\par
\begin{Cor}\label{cor:ExtPyrConst}
Let $o\in V$ be a point outside of the affine hull of $K$. Let $\ell\in\N$, let 
$u=\{1,\ldots,\ell\}$, let $s\subseteq u$, and let $x$ be a point in the intersection 
$\P(K,o)_u^s$ of sublevel and level sets. If the face $F_{\P(K,o)_u^s}(x)$ of 
$\P(K,o)_u^s$ generated by $x$ has dimension $m\in\N_0$, then exactly one of the 
following cases applies.
\begin{enumerate}
\item
The point $x$ is the apex $o$, an extreme point of $\P(K,o)_u^s$ 
and $\P(K,o)$.
\item
The point $x$ lies in $K$ and generates the face $F_{\P(K,o)}(x)=F_K(x)$ of the 
pyramid $\P(K,o)$.
The dimension of $F_K(x)$ lies in $\{m,m+1,\ldots,m+\ell\}$.
\item
The point $x$ lies outside of $K\cup\{o\}$ and generates the face $\P(F_K(\hat{x}),o)$
of $\P(K,o)$. The dimension of the face $F_K(\hat{x})$ of $K$ generated by $\hat{x}$ 
lies in $\{m-1,m,\ldots,m+\ell-1\}$ if $m\geq 1$ and in $\{0,1,\ldots,\ell-1\}$ 
if $m=0$.
\end{enumerate}
\end{Cor}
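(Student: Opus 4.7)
The plan is to derive the corollary as a short consequence of three ingredients: Corollary~\ref{cor:ExtConst-mult} applied to the pyramid $\P(K,o)$ in the role of the ambient convex set, the explicit formula \eqref{eq:face-pyr} for $F_{\P(K,o)}(x)$, and the observation that adjoining an apex $o\notin\aff(K)$ increases affine dimension by exactly one. First I would invoke Corollary~\ref{cor:ExtConst-mult} with $\P(K,o)$ in place of $K$ to obtain $\dim F_{\P(K,o)}(x)\in\{m,m+1,\ldots,m+\ell\}$, and then case-split according to where $x$ sits in the pyramid, as governed by \eqref{eq:face-pyr}.

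For Case 1, if $x=o$, then Lemma~\ref{lem:FacesPyr} (in fact already its proof) shows that $\{o\}$ is a face of $\P(K,o)$, so $o$ is extreme there; since $\P(K,o)_u^s\subseteq\P(K,o)$ and $o\in\P(K,o)_u^s$, the point $o$ is also extreme in $\P(K,o)_u^s$, giving $F_{\P(K,o)_u^s}(o)=\{o\}$ and hence $m=0$, which yields both extremality claims simultaneously.

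For Case 2, if $x\in K$, equation \eqref{eq:face-pyr} immediately identifies $F_{\P(K,o)}(x)=F_K(x)$, so the dimension estimate from Corollary~\ref{cor:ExtConst-mult} transfers verbatim. For Case 3, if $x\notin K\cup\{o\}$, equation \eqref{eq:face-pyr} identifies $F_{\P(K,o)}(x)=\P(F_K(\hat x),o)$. The key geometric step I would then carry out is the dimension formula $\dim\P(F,o)=\dim F+1$ for every nonempty convex subset $F\subseteq K$: indeed, $\aff(\P(F,o))=\aff(F\cup\{o\})=\aff(F)+\R(o-y)$ for any $y\in F$, and this sum is direct because $o\notin\aff(K)\supseteq\aff(F)$. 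Applying this with $F=F_K(\hat x)$ turns the range $\{m,\ldots,m+\ell\}$ for $\dim\P(F_K(\hat x),o)$ into the range $\{m-1,\ldots,m+\ell-1\}$ for $\dim F_K(\hat x)$. When $m=0$, I would additionally use the trivial lower bound $\dim F_K(\hat x)\geq 0$ (since $\hat x\in F_K(\hat x)$) to clip the range to $\{0,1,\ldots,\ell-1\}$.

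I do not anticipate a genuine obstacle: the corollary is essentially a bookkeeping repackaging of Corollary~\ref{cor:ExtConst-mult} under the face formula \eqref{eq:face-pyr}. The only point that requires a line of justification rather than a citation is the dimension shift $\dim\P(F,o)=\dim F+1$, and this depends solely on the hypothesis $o\notin\aff(K)$ that is built into the definition of the pyramid.
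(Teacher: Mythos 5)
Your proposal is correct and follows essentially the same route as the paper: apply Corollary~\ref{cor:ExtConst-mult} with $\P(K,o)$ as the ambient set, identify $F_{\P(K,o)}(x)$ in each case via \eqref{eq:face-pyr}, and exclude the dimension $-1$ in case 3) when $m=0$ because $F_K(\hat{x})$ is nonempty. The only difference is that you spell out the dimension shift $\dim\P(F,o)=\dim F+1$ (via the direct sum $\lin(F)\oplus\R(o-y)$, using $o\notin\aff(K)$), which the paper leaves implicit.
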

\begin{proof}
The claim follows from Corollary~\ref{cor:ExtConst-mult} and equation
\eqref{eq:face-pyr}. If $m=0$, then the dimension $m-1=-1$ of $F_K(\hat{x})$ is 
excluded from case 3) as $F_K(\hat{x})$ is nonempty.
\end{proof}
We discuss the pyramidal counterpart to Corollary~\ref{cor:easy-gaps}.
\par
\begin{Cor}\label{cor:gap-pyr}
Let $o\in V$ be a point outside of the affine hull of $K$. Let $\ell\in\N$, let 
$u=\{1,\ldots,\ell\}$, let $s\subseteq u$, and let $K$ have no face with dimension 
in $u$. Then every extreme point of $\P(K,o)_u^s$ is a convex combination of one 
extreme point of $K$ and of the apex $o$ of the pyramid $\P(K,o)$.
\end{Cor}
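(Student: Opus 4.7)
The plan is to apply Corollary~\ref{cor:ExtPyrConst} to an extreme point $x$ of $\P(K,o)_u^s$, in which case the face generated by $x$ in $\P(K,o)_u^s$ is $\{x\}$, that is, has dimension $m=0$. This reduces the claim to walking through the three mutually exclusive cases provided by Corollary~\ref{cor:ExtPyrConst} and invoking the hypothesis that $K$ has no face of dimension in $u=\{1,\ldots,\ell\}$ in each of them. The work is essentially bookkeeping; no new geometric argument is needed.

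In case 1) we have $x=o$ and there is nothing to prove: $x$ itself is (the trivial) convex combination with weight $1$ on $o$. In case 2) the point $x$ lies in $K$ and generates a face $F_K(x)$ of $K$ whose dimension belongs to $\{0,1,\ldots,\ell\}$. By hypothesis, $K$ has no face of dimension in $\{1,\ldots,\ell\}$, so $\dim F_K(x)=0$. This means $F_K(x)=\{x\}$, i.e.\ $x$ is an extreme point of $K$, and again $x$ is a trivial convex combination (weight $1$ on itself, weight $0$ on $o$). In case 3) the point $x$ lies outside $K\cup\{o\}$ and generates the face $\P(F_K(\hat{x}),o)$ of $\P(K,o)$, with $\dim F_K(\hat{x})\in\{0,1,\ldots,\ell-1\}$ because $m=0$. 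The hypothesis again excludes dimensions $1,\ldots,\ell-1$, so $\dim F_K(\hat{x})=0$, meaning $\hat{x}$ is an extreme point of $K$. Since $x$ lies on the segment $[\hat{x},o]$ by the definition of $\P(K,o)$, it is a convex combination of the extreme point $\hat{x}$ of $K$ and the apex $o$.

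The only subtlety, if any, is verifying that the dimension bounds from Corollary~\ref{cor:ExtPyrConst} really do line up with $u=\{1,\ldots,\ell\}$ so that the hypothesis collapses each of the intervals $\{0,1,\ldots,\ell\}$ and $\{0,1,\ldots,\ell-1\}$ down to $\{0\}$, and there is nothing else to check. I do not anticipate a genuine obstacle: all the real content has been absorbed into Corollary~\ref{cor:ExtPyrConst} and equation~\eqref{eq:face-pyr}, which encode the interaction between the pyramid construction and faces generated by a point, and into the observation that extremality of $x$ forces $m=0$.
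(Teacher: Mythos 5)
Your proof is correct and follows the same route as the paper: apply Corollary~\ref{cor:ExtPyrConst} with $m=0$ (extremality of $x$) and use the hypothesis that $K$ has no face of dimension in $u$ to collapse the dimension ranges in cases 2) and 3) to $\{0\}$. The case analysis and conclusions match the paper's own proof essentially verbatim.
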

\begin{proof}
The claim follows from Corollary~\ref{cor:ExtPyrConst} when $m=0$. Let $x$ be an 
extreme point of $\P(K,o)_u^s$. Case 1) of Corollary~\ref{cor:ExtPyrConst} is 
consistent with the claim. In case 2) we have $x\in K$ and 
$\dim F_K(x)\in\{0,1,\ldots,\ell\}$. The assumption $\dim(F_K(x))\not\in u$ implies 
that $x$ is an extreme point of $K$. In case 3) we have 
$F_{\P(K,o)}(x)=\P(F_K(\hat{x}),o)$ and $\dim(F_K(\hat{x}))\in\{0,\ldots,\ell-1\}$. 
The assumption $\dim(F_K(\hat{x}))\not\in u$ shows that $\hat{x}$ is an extreme 
point of $K$. Hence, $x$ is the convex combination $x=(1-\lambda)\hat{x}+\lambda o$ 
for some $\lambda\in(0,1)$.
\end{proof}
%
%
\section{Extreme Points of Quantum States under Expected Value Constraints}
\label{sec:extreme-points-quantum}
In the remainder of the article we explore expected value functionals on the set 
of quantum states. These functionals are generalized affine maps. In the present 
section we apply the above findings to pairs of expected value functionals. 
We also discuss the failure of analogous assertions for triples of expected value 
functionals and for the set of classical states.
\par
Let $\cH$ be a separable Hilbert space with inner product 
$\braket{\,\cdot\,|\,\cdot\,}$. The space $\fT$ of trace-class operators on 
$\cH$ is a Banach space with the trace norm $\|\cdot\|_1$. The real Banach space of 
self-adjoint trace-class operators contains the closed convex cone $\fT^+$ of 
positive trace-class operators, which contains the closed convex sets
$\fT^1=\fT^1(\cH)$ of positive trace-class operators with trace at most one and
$\fS=\fS(\cH)$ of positive trace-class operators with trace equal one called 
quantum states or density operators. Note that $\fT^1=\P(\fS,0)$ is the pyramid over 
$\fS$ with apex zero.
\par
We define a constraint on $\fT^+$ using a (possibly unbounded) positive operator $H$ 
on $\cH$. We approximate $H$ by the sequence $HP_n$ of bounded operators, where 
$P_n=\int_0^n dE_H(\lambda)$ is the spectral projector of $H$ corresponding to $[0,n]$ 
and $E_H$ is a spectral measure on the Borel $\sigma$-algebra of $[0,\infty)$, see for 
example \cite{Schmuedgen2012}. We define the functional
\[
f_H: \fT^+\to[0,+\infty],
\qquad
A\mapsto\Tr HA=\lim_{n\to\infty}\Tr(HP_nA).
\]
The number $\Tr H\rho$ is the {\em expected value} of the observable associated to $H$ 
if $\rho\in\fS$ is the state of the quantum system. The map $f_H$ is lower 
semicontinuous as $f_H(A)=\sup_{n\in\N}\Tr(HP_nA)$ for all $A\in\fT^+$. Since $H$ is a 
positive operator, the map $f_H$ is a generalized affine map in the sense of 
Section~\ref{sec:extreme-points}. This remains true if we replace $H$ with a 
self-adjoint, lower-bounded operator on $\cH$. Similarly, all assertions below remain 
valid if we replace positive operators with self-adjoint, lower-bounded operators.
\par
We study constraints imposed by several operators using a notation similar to 
equation \eqref{eq:const-multiple}. Let $\ell\in\N$, let $H_k$ be a positive 
operator on $\cH$, and let $E_k\in\R$ for all $k\in u=\{1,\ldots,\ell\}$. For each 
subset $s\subseteq u$, we define the intersections 
\begin{align}\label{eq:sublevelS}
& \fS_{H_1,E_1,H_2,E_2,\ldots,H_\ell,E_\ell}^s\\\nonumber
& \; = \;
\{\rho\in\fS(\cH) : \Tr H_k\rho\leq E_k\;\forall k\in u\setminus s
\;\text{and}\; \Tr H_k\rho=E_k\;\forall k\in s \}
\end{align}
and 
\begin{align}\label{eq:sublevelT1}
& (\fT^1)_{H_1,E_1,H_2,E_2,\ldots,H_\ell,E_\ell}^s\\\nonumber
& \; = \;
\{\rho\in\fT^1(\cH) : \Tr H_k\rho\leq E_k\;\forall k\in u\setminus s
\;\text{and}\; \Tr H_k\rho=E_k\;\forall k\in s \}
\end{align}
of $\ell-|s|$ sublevel sets and $|s|$ level sets. We simplify the notation for 
{\em sublevel sets} by writing
\[
\fS_{H_1,E_1,H_2,E_2,\ldots,H_\ell,E_\ell}
=\fS_{H_1,E_1,H_2,E_2,\ldots,H_\ell,E_\ell}^\emptyset
\]
and
\[
\fT^1_{H_1,E_1,H_2,E_2,\ldots,H_\ell,E_\ell}
=(\fT^1)_{H_1,E_1,H_2,E_2,\ldots,H_\ell,E_\ell}^\emptyset.
\]
The intersections $\fS_{H_1,E_1,H_2,E_2,\ldots,H_\ell,E_\ell}$ and 
$\fT^1_{H_1,E_1,H_2,E_2,\ldots,H_\ell,E_\ell}$ of sublevel sets are closed sets as 
the map $\fT^+\to[0,+\infty]$, $A\mapsto\Tr H_kA$ is lower semi-continuous for all
$k=1,\ldots,\ell$.
\par
It is well known that the set of extreme points $\,\ext(\fS)$ of the set of quantum 
states $\fS(\cH)$ consists of the projectors of rank one, called {\em pure states}. 
The finite-dimensional faces are isometric to the sets $\fS(\C^d)$ for all 
$d\leq\dim(\cH)$.
\par
\begin{Lem}\label{lem:dimensions-q}
If a face of the set of quantum states $\fS(\cH)$ has finite dimension $n<\infty$, 
then $n=d^2-1$ for some $d\in\N$.
\end{Lem}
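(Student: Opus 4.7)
The plan is to reduce the claim to analyzing the face generated by a single state and then to apply spectral theory. First, since $F$ has finite dimension $n<\infty$, the standard finite-dimensional fact already invoked in Corollary~\ref{cor:gaps} (Theorem~6.2 of \cite{Rockafellar1970}) gives a point $\rho\in\ri(F)$. By Corollary~\ref{cor:char-ri} this yields $F=F_{\fS(\cH)}(\rho)$, so it suffices to show that such a face of finite dimension satisfies $\dim F=d^2-1$ for $d=\mathrm{rank}(\rho)\in\N$.

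Second, I would establish the operator-theoretic description
\[
F_{\fS(\cH)}(\rho)=\{\sigma\in\fS(\cH):\sigma\leq c\rho\text{ for some }c>0\},
\]
which is the key structural lemma. The forward direction uses Corollary~\ref{cor:facex}: if $\rho=(1-\lambda)\sigma+\lambda\tau$ with $\tau\in\fS(\cH)$ and $\lambda\in(0,1)$, then $(1-\lambda)\sigma\leq\rho$ in the positive operator ordering. For the converse, given $\sigma\leq c\rho$ I would choose $\lambda=1/(c+1)$ and set $\tau=(\rho-\lambda\sigma)/(1-\lambda)$; positivity of $\tau$ follows from $\sigma\leq c\rho\leq\lambda^{-1}\rho$, the trace of $\tau$ is one, and $\rho$ now lies on the open segment $(\sigma,\tau)$, placing $\sigma$ in the face.

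Third, I would rule out infinite rank. Diagonalize $\rho=\sum_k p_k|e_k\rangle\langle e_k|$ with $p_k>0$. Each pure state $|e_k\rangle\langle e_k|$ satisfies $|e_k\rangle\langle e_k|\leq p_k^{-1}\rho$ and hence belongs to $F_{\fS(\cH)}(\rho)$; if this sum had infinitely many terms, the face would contain infinitely many linearly independent extreme points, contradicting $\dim F<\infty$. Hence $d=\mathrm{rank}(\rho)<\infty$, the support projection $P$ has rank $d$, and the smallest nonzero eigenvalue $\mu$ of $\rho$ is strictly positive.

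Finally, I would identify the face with $\fS(P\cH)$. The characterization above forces every $\sigma\in F_{\fS(\cH)}(\rho)$ to have range contained in $P\cH$, while conversely any state $\sigma$ supported on $P\cH$ satisfies $\sigma\leq P\leq\mu^{-1}\rho$ and so lies in the face. Thus $F_{\fS(\cH)}(\rho)$ is affinely isomorphic to $\fS(\C^d)$, which sits as the trace-one affine slice of the $d^2$-dimensional real vector space of self-adjoint $d\times d$ matrices and contains the relatively open set of faithful states, giving $\dim F=d^2-1$. The main obstacle is the second step: passing from the abstract convex-combination criterion of Corollary~\ref{cor:facex} to the operator inequality $\sigma\leq c\rho$ is where positivity and quantum-theoretic structure genuinely enter the argument.
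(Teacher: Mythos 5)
Your proof is correct, but it takes a genuinely different route from the paper. The paper's proof is a two-line reduction: it quotes Theorem~4.6 of Alfsen--Shultz for the fact that the finite-dimensional \emph{closed} faces of $\fS(\cH)$ have dimensions $d^2-1$, and then only has to check that every nonempty finite-dimensional face is automatically closed (using that $\ri(F)\neq\emptyset$ in finite dimension, that $\fS(\cH)$ is closed, and that $F$ is extreme). You instead give a self-contained argument: you reduce to $F=F_{\fS(\cH)}(\rho)$ via Corollary~\ref{cor:char-ri}, prove the order-theoretic description $F_{\fS(\cH)}(\rho)=\{\sigma:\sigma\leq c\rho\}$ from Corollary~\ref{cor:facex}, rule out infinite rank by exhibiting the eigenprojectors of $\rho$ inside the face, and identify the face with $\fS(P\cH)\cong\fS(\C^d)$. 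All steps check out; the one place you are slightly imprecise is the forward direction of the order characterization, where Corollary~\ref{cor:facex} only puts $\sigma$ on a closed segment $[y,z]$ with $\rho\in(y,z)$ rather than directly writing $\rho=(1-\lambda)\sigma+\lambda\tau$ --- but this is immediate either by noting that $\rho$ then lies on the open segment between $\sigma$ and one of the endpoints, or more cleanly from equation \eqref{eq:affK}, which gives $\epsilon>0$ with $(1+\epsilon)\rho-\epsilon\sigma\geq 0$. What your approach buys is independence from the external reference and, as a bonus, an explicit proof of the assertion the paper states without proof just before the lemma, namely that the finite-dimensional faces are (affinely isomorphic to) the sets $\fS(\C^d)$; what the paper's approach buys is brevity and the observation, interesting in itself, that finite-dimensional faces of $\fS(\cH)$ are necessarily closed.
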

\begin{proof}
Theorem~4.6 in \cite{AlfsenShultz2001} proves that the finite-dimensional closed 
faces of $\fS(\cH)$ have dimensions $d^2-1$, $d\in\N$. The claim then follows from 
showing that every nonempty, finite-dimensional face $F$ of $\fS(\cH)$ is closed. 
As $\dim(F)<\infty$, the closure $\overline{F}$ is included in $\aff(F)$ and 
the relative algebraic interior $\ri(F)$ contains a point $x$. Let $y\in\overline{F}$ 
be arbitrary. As $y\in\aff(F)$, the definition of the relative algebraic interior
shows that there is a point $z\in F$ such that $x$ lies in the open segment $(y,z)$.
Since $\fS(\cH)$ is closed, we have $y\in\fS(\cH)$. As $F$ is an extreme
subset of $\fS(\cH)$, this shows $y\in F$ and completes the proof.
\end{proof}
Taking into account the list of dimensions from Lemma~\ref{lem:dimensions-q},
and invoking Corollary~\ref{cor:easy-gaps} and Corollary~\ref{cor:gap-pyr}, we 
obtain the following assertion.
\par
\begin{Thm}\label{thm:ExtSE+}
Let $H_1,H_2$ be arbitrary positive operators on $\cH$, let $E_1,E_2\in\R$, and let
$s\subseteq\{1,2\}$. Then all extreme points of the set 
$\fS_{H_1,E_1,H_2,E_2}^s$ are pure states. All extreme points of the set 
$(\fT^1)_{H_1,E_1,H_2,E_2}^s$ have rank at most one.
\end{Thm}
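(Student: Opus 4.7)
The plan is to apply Corollary~\ref{cor:easy-gaps} for the first claim and Corollary~\ref{cor:gap-pyr} for the second, invoking Lemma~\ref{lem:dimensions-q} to supply the crucial gap in the dimension list of the faces of $\fS(\cH)$. Since the finite-dimensional faces of $\fS(\cH)$ have dimensions in $\{0,3,8,15,\ldots\}$, there are no faces of dimension in $u=\{1,2\}$, which is precisely the hypothesis those corollaries require when $\ell = 2$.

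First I would observe that each functional $f_{H_k}:\rho\mapsto\Tr H_k\rho$ is a generalized affine map on $\fS(\cH)$ (and on $\fT^+$), as noted in the text. Consequently, the set $\fS_{H_1,E_1,H_2,E_2}^s$ defined in equation~\eqref{eq:sublevelS} is exactly an intersection of sublevel and level sets of the form considered in equation~\eqref{eq:const-multiple}, with $K=\fS(\cH)$, $\ell=2$, and $u=\{1,2\}$. Corollary~\ref{cor:easy-gaps} then yields immediately that every extreme point of $\fS_{H_1,E_1,H_2,E_2}^s$ is an extreme point of $\fS(\cH)$, hence a pure state.

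For the second claim I would use the identification $\fT^1=\P(\fS,0)$, noting that the apex $0$ lies outside $\aff(\fS)$ because every element of $\aff(\fS)$ has trace one, so the pyramidal setup of Section~\ref{sec:extreme-points} applies. The same functionals $f_{H_k}$ are generalized affine on $\fT^1$, so the set $(\fT^1)_{H_1,E_1,H_2,E_2}^s$ from equation~\eqref{eq:sublevelT1} fits the framework of Corollary~\ref{cor:gap-pyr} with the same $\ell=2$ and $u=\{1,2\}$. That corollary then gives that every extreme point has the form $(1-\lambda)\rho + \lambda\cdot 0 = (1-\lambda)\rho$ for some pure state $\rho$ and some $\lambda\in[0,1]$ (covering also the boundary cases where the extreme point is the apex $0$ itself or an extreme point of $\fS$). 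Every operator of this form has rank at most one.

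Because both claims reduce to a direct citation of earlier corollaries, the argument has no genuine obstacle; the content has already been absorbed into Lemma~\ref{lem:dimensions-q} and the abstract geometric machinery of Section~\ref{sec:extreme-points}. The only steps requiring minor care are the identification of the constraint sets as instances of the abstract (sub-)level set constructions and the verification that the apex $0$ lies outside $\aff(\fS)$ so that the pyramid structure is well-defined.
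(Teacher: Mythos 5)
Your proposal is correct and follows essentially the same route as the paper, whose proof consists precisely of citing Lemma~\ref{lem:dimensions-q} for the gap $\{1,2\}$ in the dimensions of faces of $\fS(\cH)$ and then invoking Corollary~\ref{cor:easy-gaps} for $\fS_{H_1,E_1,H_2,E_2}^s$ and Corollary~\ref{cor:gap-pyr} for $(\fT^1)_{H_1,E_1,H_2,E_2}^s$ via the pyramid identification $\fT^1=\P(\fS,0)$. Your added checks (that $f_{H_k}$ is a generalized affine map and that the apex $0$ lies outside $\aff(\fS)$) are exactly the details the paper leaves implicit.
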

Theorem~\ref{thm:ExtSE+} implies Corollary~\ref{cor:ExtSE} below by taking 
$H_2=H_1$ and $E_2=E_1$. In the sequel, we will omit further mention of similar 
reductions from two to one operators. 
\par
\begin{Cor}\label{cor:ExtSE}
Let $H$ be an arbitrary positive operator on $\cH$, let $E\in\R$, and let 
$s\subseteq\{1\}$. Then all extreme points of the set $\fS_{H,E}^s$ are pure states. 
All extreme points of the set $(\fT^1)_{H,E}^s$ have rank at most one.
\end{Cor}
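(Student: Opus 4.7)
The plan is to recognize Corollary~\ref{cor:ExtSE} as the diagonal case of Theorem~\ref{thm:ExtSE+} in which the two positive operators (and the two thresholds) coincide. This is how the authors advertise the reduction in the paragraph preceding the corollary, so no new convex-geometric input is needed; the work is purely a bookkeeping verification that the sets match.

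Concretely, I would set $H_1 = H_2 := H$ and $E_1 = E_2 := E$ in the definitions~\eqref{eq:sublevelS} and~\eqref{eq:sublevelT1}. The pair of conditions $\Tr H_1 \rho \leq E_1$ and $\Tr H_2 \rho \leq E_2$ collapses to the single inequality $\Tr H \rho \leq E$, and likewise the corresponding equality conditions collapse to $\Tr H \rho = E$. A short case distinction on $s' \subseteq \{1,2\}$ then matches each set $\fS_{H,E}^s$ (for $s \subseteq \{1\}$) with one of the sets appearing in Theorem~\ref{thm:ExtSE+}: for $s = \emptyset$ take $s' = \emptyset$, and for $s = \{1\}$ take $s' = \{1,2\}$ (since a level-set condition on $H$ absorbs the duplicate sublevel-set condition, the intermediate choices $s' = \{1\}$ or $s' = \{2\}$ produce the same set). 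The identical argument applies to $(\fT^1)_{H,E}^s$ using~\eqref{eq:sublevelT1}.

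Once these equalities of sets are noted, Theorem~\ref{thm:ExtSE+} immediately yields that every extreme point of $\fS_{H,E}^s$ is a pure state and that every extreme point of $(\fT^1)_{H,E}^s$ has rank at most one. I anticipate no obstacle: the entire convex-geometric content (the gap $\{1,2\}$ in the admissible face dimensions of $\fS(\cH)$ from Lemma~\ref{lem:dimensions-q}, its use through Corollary~\ref{cor:easy-gaps} for states, and through Corollary~\ref{cor:gap-pyr} for the pyramid $\fT^1 = \P(\fS,0)$) has already been carried out in the proof of Theorem~\ref{thm:ExtSE+}. As a sanity check, note that one could alternatively deliver a direct proof by applying Corollary~\ref{cor:easy-gaps} and Corollary~\ref{cor:gap-pyr} with $\ell = 1$ and $u = \{1\}$, using only the single constraint $f_H$ and the fact that $1 \notin \{d^2 - 1 : d \in \N\}$; this confirms that the reduction from the two-operator result loses nothing.
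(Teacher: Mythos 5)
Your proposal is correct and matches the paper's own argument, which is precisely the one-line reduction ``Theorem~\ref{thm:ExtSE+} implies Corollary~\ref{cor:ExtSE} by taking $H_2=H_1$ and $E_2=E_1$''; your bookkeeping of which $s'\subseteq\{1,2\}$ realizes each $s\subseteq\{1\}$ is a sound (if more explicit) version of the same step.
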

Let $\cH=\C^d$ for some $d\in\N$. The set of quantum states $\fS(\C^d)$ is a 
compact, convex set, which is a base of the cone $\fT^+(\C^d)$ of positive 
semidefinite matrices. If $H\in\fT^+(\C^d)$ then
\[
f_H: \fT^+(\C^d)\to[0,+\infty),
\qquad
A\mapsto\Tr HA
\]
is a continuous, affine map. 
\par
\begin{Cor}\label{cor:finite-dim}
Let $\cH=\C^d$ for some $d\in\N$. Let $H_1,H_2\in\fT^+(\C^d)$ be arbitrary positive 
semidefinite matrices, let $E_1,E_2\in\R$, and let $s\subseteq\{1,2\}$. Then the 
intersection $\fS_{H_1,E_1,H_2,E_2}^s$ of sublevel and level sets is a compact, 
convex set. Every state $\rho\in\fS_{H_1,E_1,H_2,E_2}^s$ can be represented as
\begin{equation}\label{eq:Memarzadeh-Mancini}\textstyle
\rho=\sum_{i=1}^{d^2} p_i\sigma_i,
\end{equation}
where $\{p_i\}_{i=1}^{d^2}$ is a probability distribution and 
$\{\sigma_i\}_{i=1}^{d^2}\subseteq\fS_{H_1,E_1,H_2,E_2}^s$ is a set of pure states. 
\end{Cor}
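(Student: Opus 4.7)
The plan is to combine standard finite-dimensional convex-geometry facts with the characterisation of extreme points already established in Theorem~\ref{thm:ExtSE+}. The corollary has two parts: first, that $\fS_{H_1,E_1,H_2,E_2}^s$ is compact and convex; second, that every state in it decomposes into at most $d^2$ pure states drawn from the same set.

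For compactness and convexity, I would note that in finite dimension the maps $f_{H_k}:\fT^+(\C^d)\to[0,+\infty)$ are continuous and affine. Hence each sublevel set $\{\rho\in\fS(\C^d):\Tr H_k\rho\leq E_k\}$ and each level set $\{\rho\in\fS(\C^d):\Tr H_k\rho=E_k\}$ is a closed, convex subset of the compact convex set $\fS(\C^d)$. Their intersection $\fS_{H_1,E_1,H_2,E_2}^s$ is therefore compact and convex.

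For the decomposition, first apply Theorem~\ref{thm:ExtSE+} to deduce that every extreme point of $\fS_{H_1,E_1,H_2,E_2}^s$ is a pure state of $\fS(\C^d)$. Since $\fS_{H_1,E_1,H_2,E_2}^s$ is a compact convex subset of a finite-dimensional real vector space, Minkowski's theorem (the finite-dimensional Krein--Milman) expresses it as the convex hull of its extreme points. Now apply Carathéodory's theorem: the ambient affine space of $\fS(\C^d)$, namely the trace-one hyperplane in the real vector space of Hermitian $d\times d$ matrices, has dimension $d^2-1$, so every point of $\fS_{H_1,E_1,H_2,E_2}^s$ is a convex combination of at most $d^2$ of its extreme points. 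Each of these extreme points is a pure state that already lies in $\fS_{H_1,E_1,H_2,E_2}^s$, and padding the sum with zero weights if necessary yields exactly $d^2$ terms as in \eqref{eq:Memarzadeh-Mancini}.

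There is really no serious obstacle here; the content is entirely carried by Theorem~\ref{thm:ExtSE+} together with Carathéodory's theorem. The only point requiring a mild sanity check is the dimension count $d^2$ rather than $d^2-1$ or $d^2+1$: Carathéodory gives $\dim(\aff\fS_{H_1,E_1,H_2,E_2}^s)+1\leq d^2$ points, and allowing vanishing weights lets us always write the sum with exactly $d^2$ terms.
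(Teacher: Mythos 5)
Your proposal is correct and follows essentially the same route as the paper: compactness from continuity of the trace functionals on the compact set $\fS(\C^d)$, extreme points identified as pure states via Theorem~\ref{thm:ExtSE+}, and the bound of $d^2$ terms from Carath\'eodory's theorem applied in the $(d^2-1)$-dimensional affine hull of $\fS(\C^d)$. The paper simply cites the combined form of Carath\'eodory's theorem for compact convex sets (every point is a convex combination of at most $n+1$ extreme points), which packages your Minkowski-plus-Carath\'eodory step into one reference.
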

\begin{proof}
The convex set $\fS_{H_1,E_1,H_2,E_2}^s$ is compact as the set of quantum states 
$\fS(\C^d)$ is compact and as 
\[
\fT^+(\C^d)\to[0,+\infty),
\qquad
A\mapsto\Tr H_i A,
\qquad i=1,2
\]
are continuous maps. Carath\'eodory's theorem asserts that every point in a compact, 
convex subset $C$ of $\R^n$ is a convex combination of at most $n+1$ extreme 
points of $C$, see for example \cite{Roy1987,Schneider2014}. The claim follows 
as the extreme points of $\fS_{H_1,E_1,H_2,E_2}^s$ are pure states by 
Theorem~\ref{thm:ExtSE+}, and as $\dim\fS(\C^d)=d^2-1$.
\end{proof}
The assertion \eqref{eq:Memarzadeh-Mancini} of Corollary~\ref{cor:finite-dim}
for the level set $\fS_{H,E}^{\{1\}}$ is proved in \cite{Man}.
\par
\begin{Rem}\label{rem:three-obs}
If more than two positive operators are employed, the assertion analogous to
Theorem \ref{thm:ExtSE+} is not valid. Perhaps, the simplest example is the Hilbert 
space $\cH=\C^2$ and positive semidefinite matrices $H_1=\id+X$, $H_2=\id+Y$, and 
$H_3=\id+Z$, where $\id=\left(\begin{smallmatrix}1&0\\0&1\end{smallmatrix}\right)$
is the identity matrix and 
$X=\left(\begin{smallmatrix}0&1\\1&0\end{smallmatrix}\right)$,
$Y=\left(\begin{smallmatrix}0&-\ii\\\ii&0\end{smallmatrix}\right)$, and
$Z=\left(\begin{smallmatrix}1&0\\0&-1\end{smallmatrix}\right)$ 
are the Pauli matrices. If $E_1=E_2=E_3=1$, then for all subsets 
$s\subseteq u=\{1,2,3\}$ the set
\begin{align*}
\fS^s & = \; \fS_{H_1,1,H_2,1,H_3,1}^s\\
& = \;
\{\rho\in\fS(\C^2) : \Tr H_k\rho\leq E_k\;\forall k\in u\setminus s
\;\text{and}\;\Tr H_k\rho=E_k\;\forall k\in s \}
\end{align*}
is a spherical sector of the Bloch ball $\fS(\C^2)$. Theorem \ref{thm:ExtSE+} fails 
as the trace state $\tfrac{1}{2}\id$ is an extreme point of $\fS^s$ of rank two.
\par
Theorem~\ref{thm:sup-pure-2obs+} below allows us to express the suprema of certain 
functions as suprema over pure states. However, this is not possible for more than 
two positive operators. Consider the map
\[
f:\fS^s\to\R,
\quad
\rho\mapsto\Tr(X+Y+Z)\rho.
\]
The domain $\fS^s$ is the intersection of $t=3-|s|$ sublevel sets and $|s|$ level 
sets. The image $f(\fS^s)$ is the interval $[-\sqrt{t},0]$. The image of the set 
of pure states in $\fS^s$ under $f$ is the interval $[-\sqrt{t},-1]$ if $t\geq 1$ 
and is empty if $t=0$. If $t\geq 1$, then the maximum on the set of pure states is 
attained at $t$ of the pure states 
\[
\tfrac{1}{2}(\id-X), \quad 
\tfrac{1}{2}(\id-Y), \quad\text{and}\quad
\tfrac{1}{2}(\id-Z).
\]
In any case, the maximum $f(\tfrac{1}{2}\id)=0>-1$ of $f$ is neither equal to nor 
approximated by the values of $f$ at pure states in $\fS^s$. Similarly, it is easy 
to check that for all $s\subseteq u$ 
\[
\conv\{\rho\in\fS^s : \text{$\rho$ is a pure state} \}
=  \{\rho\in\fS^s : f(\rho)\leq-1\}.
\]
No state $\rho\in\fS^s$ with $f(\rho)>-1$ can be the barycenter of a probability 
measure supported on the set of pure states in $\fS^s$. This shows that 
Theorem~\ref{thm:KMC} and Corollary~\ref{cor:C++} below fail for more than 
two positive operators.
\end{Rem}
The classical analogues of our results fail as the set of classical states 
has one-dimension faces. 
\par
\begin{Rem}[Classical states]
The set of classical states on the Hilbert space $\C^3$ with respect to an 
orthonormal basis $e_1,e_2,e_3$ of $\C^3$ is
\[
\fP_3
=\left\{p(1)\sigma_1
+p(2)\sigma_2
+p(3)\sigma_3 : p\in\Delta_{\{1,2,3\}}\right\}.
\]
Here, $\Delta_{\{1,2,3\}}$ is the simplex of probability densities introduced in
Example~\ref{exa:discrete-pms} and $\sigma_n$ is the projector onto the line spanned 
by $e_n$ for all $n=1,2,3$. The set $\fP_3$ is a triangle with extreme points 
$\sigma_1,\sigma_2,\sigma_3$. The sublevel and level set of
\[
f:\fP_3\to\R,
\quad
\rho\mapsto\Tr(\sigma_3\rho)=\braket{e_3|\rho e_3}
\]
at $\alpha=\tfrac{1}{2}$ is denoted in equation \eqref{eq:simple-const},
respectively, as
\[
(\fP_3)_f^\leq =\{\rho\in\fP_3:f(\rho)\leq\tfrac{1}{2}\}
\quad\text{and}\quad
(\fP_3)_f^= =\{\rho\in\fP_3:f(\rho)=\tfrac{1}{2}\}.
\]
\par
The level set $(\fP_3)_f^=$ is the segment $[\rho_1,\rho_2]$ and has the  
extreme points $\rho_i=\tfrac{1}{2}(\sigma_i+\sigma_3)$, $i=1,2$. By 
Lemma~\ref{lem:sub-level}, the points $\rho_1,\rho_2$ are also extreme points of 
the sublevel set $(\fP_3)_f^\leq$. The analogue of Corollary~\ref{cor:ExtSE} fails 
for classical states as the points $\rho_1,\rho_2$ have rank two despite the fact 
that they are extreme points of $(\fP_3)_f^=$ and $(\fP_3)_f^\leq$.
\par
The sublevel set $(\fP_3)_f^\leq$ contains only two pure states, namely $\sigma_1$ 
and $\sigma_2$. Hence, only the states on the segment $[\sigma_1,\sigma_2]$ can be
represented as convex combinations of pure states from $(\fP_3)_f^\leq$. In 
particular, the analogue of Corollary~\ref{cor:C+} fails: It is impossible to 
represent any state from $(\fP_3)_f^=$ as the barycenter of pure states from
$(\fP_3)_f^=$. Similarly, the analogue of Theorem~\ref{thm:sup-pure-2obs+} 
fails: The supremum $f(\rho_1)=1/2$ of $f$ on 
$(\fP_3)_f^\leq$ is not attained (neither approximated) by pure states as 
$f(\sigma_1)=f(\sigma_2)=0$ holds for the sole pure states in $(\fP_3)_f^\leq$.
\end{Rem}
%
%
\section{Pure-State Decomposition Theorem}
\label{sec:decomposition}
Let $H_1$ and $H_2$ be positive operators on a separable Hilbert space $\cH$. If 
$\dim(\cH)<\infty$, then Corollary~\ref{cor:finite-dim} above provides a pure-state 
decomposition for the intersection $\fS_{H_1,E_1,H_2,E_2}^s$ of sublevel and level 
sets for all $s\subseteq\{1,2\}$, see \eqref{eq:sublevelS} for the notation. If 
$\dim(\cH)=\infty$, we need to differentiate between sublevel and level sets.
Despite the fact that the former are closed (as the expected value functionals are 
lower semicontinuous) and $\mu$-compact while the latter are not even closed,
we prove pure-state decompositions for both.
\par
We begin with sublevel sets. If $H_1$ (or $H_2$) is a positive operator with a 
discrete spectrum of finite multiplicity, then the intersections 
$\fS_{H_1,E_1,H_2,E_2}$ and $\fT^1_{H_1,E_1,H_2,E_2}$ of sublevel sets are compact. 
Indeed, it has been shown in \cite{Holevo2004} that $\fS_{H_1,E_1}$ is 
compact\footnote{Recall that if a positive operator $H$ on an infinite dimensional
Hilbert space $\cH$ has a discrete spectrum of finite multiplicity, then there is 
sequence of non-negative numbers $(\lambda_n)_{n\in\N}$ and an orthonormal basis 
$\{e_n:n\in\N\}$ of $\cH$ such that $\lim_{n\to\infty}\lambda_n=\infty$ and 
$He_n=\lambda e_n$ for all $n\in\N$. See for example \cite{Schmuedgen2012},
Corollary~5.11 and Proposition~5.12.}. 
It follows that $\fS_{H_1,E_1,H_2,E_2}=\fS_{H_1,E_1}\cap\fS_{H_2,E_2}$ is compact as 
$\fS_{H_2,E_2}$ is closed. Similarly, one can show that $\fT^1_{H_1,E_1,H_2,E_2}$ is 
compact by using Proposition~11 in \cite[Appendix]{AQC}.
\par
If $H_1$ and $H_2$ are arbitrary positive operators, the sets $\fS_{H_1,E_1,H_2,E_2}$ 
and $\fT^1_{H_1,E_1,H_2,E_2}$ are closed but not {\em compact}. Yet, they are 
{\em $\mu$-compact} by Proposition 2 in \cite{HolevoShirokov2006} and Proposition 4 
in \cite{P&Sh}, respectively. Proposition~5 in \cite{P&Sh} provides generalized 
assertions of Krein-Milman's theorem and of Choquet's theorem for $\mu$-compact sets. 
We employ Theorem~\ref{thm:ExtSE+} to make these assertions more explicit.
\par
\begin{Thm}\label{thm:KMC}
Let $H_1,H_2$ be arbitrary positive operators on $\cH$ and $E_1,E_2$ nonnegative 
numbers such that the intersection $\fS_{H_1,E_1,H_2,E_2}$ of sublevel sets is 
nonempty. Then the set $\,\ext\fS_{H_1,E_1,H_2,E_2}$ of extreme points is equal to 
the set of pure states $\fS_{H_1,E_1,H_2,E_2}\cap\ext\fS(\cH)$, which is nonempty
and closed.
\begin{enumerate}
\item[A (Krein-Milman's theorem).]
The set $\,\fS_{H_1,E_1,H_2,E_2}$ is the closure of the convex hull of $\,\ext\fS_{H_1,E_1,H_2,E_2}$.
\item[B (Choquet's theorem).]
Any state $\rho\in\fS_{H_1,E_1,H_2,E_2}$ can be represented as the barycenter
$\rho=\int \sigma\mu(d\sigma)$ of some Borel probability measure $\mu$ supported
by $\,\ext\fS_{H_1,E_1,H_2,E_2}$.
\end{enumerate}
\end{Thm}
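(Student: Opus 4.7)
The plan is to combine the extremal characterization from Theorem~\ref{thm:ExtSE+} with the generalized Krein--Milman and Choquet theorems for $\mu$-compact convex sets from Proposition~5 in \cite{P&Sh}. The work splits naturally into three steps.

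First, I would establish the equality $\ext\fS_{H_1,E_1,H_2,E_2}=\fS_{H_1,E_1,H_2,E_2}\cap\ext\fS(\cH)$. The inclusion ``$\supseteq$'' is immediate: any pure state that happens to lie in the sublevel set $\fS_{H_1,E_1,H_2,E_2}\subseteq\fS(\cH)$ cannot be written as a nontrivial convex combination of elements of the smaller set either. The reverse inclusion ``$\subseteq$'' is precisely the content of Theorem~\ref{thm:ExtSE+} in the case $s=\emptyset$. For closedness, the set of pure states is trace-norm closed in $\fS(\cH)$, since if $|\psi_n\rangle\langle\psi_n|\to\rho$ in trace norm, then also in operator norm, and passing to the limit in the equation $P_n^2=P_n$ yields $\rho^2=\rho$ with $\Tr\rho=1$, forcing $\rho$ to be a rank-one projector; intersecting with the closed set $\fS_{H_1,E_1,H_2,E_2}$ preserves closedness. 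Nonemptiness will follow from part A below.

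Next, for parts A and B, the key input is $\mu$-compactness of $\fS_{H_1,E_1,H_2,E_2}$. This is asserted in the paragraph preceding the theorem, invoking Proposition~2 in \cite{HolevoShirokov2006}, though I would spell out that the intersection of two $\mu$-compact sets in $\fS(\cH)$ is $\mu$-compact (each $\fS_{H_k,E_k}$ is $\mu$-compact and closedness of the other sublevel set is preserved under intersection). Once $\mu$-compactness is in hand, the generalized Krein--Milman theorem (Proposition~5 of \cite{P&Sh}) directly yields A, namely that $\fS_{H_1,E_1,H_2,E_2}$ equals the closure of the convex hull of its extreme points, and the generalized Choquet theorem (same reference) yields B, namely that every state in the set is the barycenter of a Borel probability measure supported by its extreme points. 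Substituting the identification of extreme points with pure states from the first step rewrites these conclusions in the form stated.

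Finally, nonemptiness of $\ext\fS_{H_1,E_1,H_2,E_2}$ is a corollary of A: if the extreme set were empty, the convex hull closure in A would be empty, contradicting the hypothesis that $\fS_{H_1,E_1,H_2,E_2}\neq\emptyset$. The main obstacle I anticipate is not the logical structure, which is straightforward once the right tools are lined up, but rather ensuring that Proposition~5 of \cite{P&Sh} applies to \emph{both} sublevel-set intersections in full generality (i.e.\ not requiring any discreteness of spectra of $H_1,H_2$); this requires citing $\mu$-compactness of the intersection carefully rather than merely of a single $\fS_{H,E}$.
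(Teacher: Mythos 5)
Your proposal is correct and follows essentially the same route as the paper: identify $\ext\fS_{H_1,E_1,H_2,E_2}$ with $\fS_{H_1,E_1,H_2,E_2}\cap\ext\fS(\cH)$ via Theorem~\ref{thm:ExtSE+}, observe closedness as an intersection of closed sets, and invoke the generalized Krein--Milman and Choquet theorems (Proposition~5 of \cite{P&Sh}) using the $\mu$-compactness of the sublevel-set intersection. Your extra care in spelling out the trivial inclusion, the closedness of the set of pure states, and the $\mu$-compactness of the intersection only makes explicit what the paper leaves implicit.
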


\begin{proof}
Theorem~\ref{thm:ExtSE+} shows that the set of extreme points 
$\,\ext\fS_{H_1,E_1,H_2,E_2}$ is the intersection of $\fS_{H_1,E_1,H_2,E_2}$ and 
the set of pure states $\,\ext\fS(\cH)$. As both sets are closed, their intersection 
is closed.
\par
The remaining assertions follow from Proposition 5 in \cite{P&Sh} as
$\fS_{H_1,E_1,H_2,E_2}$ is $\mu$-compact and since $\,\ext\fS_{H_1,E_1,H_2,E_2}$
is closed.
\end{proof}
Note that the closedness of the set $\,\ext\fS_{H_1,E_1,H_2,E_2}$ is not obvious
even in the case when both operators $H_1$ and $H_2$ have discrete spectrum
or in the case of $\dim(\cH)<\infty$. The closedness of the set of extreme points
is necessary for the stability \cite{Papadopoulou1977,P&Sh} of
$\,\fS_{H_1,E_1,H_2,E_2}$.
\par
\begin{Que}
Under which conditions on the operators $H_1$ and $H_2$ can part B of 
Theorem~\ref{thm:KMC} be strengthened to the statement that any state in 
$\fS_{H_1,E_1,H_2,E_2}$ is a \emph{countable} convex combination of pure states in 
$\fS_{H_1,E_1,H_2,E_2}$? This and the arguments of Corollary~\ref{cor:C+} below 
would imply that any state with finite expected values of $H_1$ and $H_2$ is a 
\emph{countable} convex combination of pure states with the same expected values.
\end{Que}
Pure-state decompositions are more subtle for level than sublevel sets. 
\par
\begin{Cor}\label{cor:C++}
Let $H_1,H_2$ be arbitrary positive operators on $\cH$, let $E_1,E_2$ be real
numbers, let $s\subseteq\{1,2\}$, and let $\rho$ lie in the intersection 
$\fS_{H_1,E_1,H_2,E_2}^s$ of sublevel and level sets. Then $\rho$ can be 
represented as the barycenter
\begin{equation}\label{u-rep+}
\rho=\int \sigma\mu(d\sigma)
\end{equation}
of some Borel probability measure $\mu$ supported by $\,\ext\fS_{H_1,E_1,H_2,E_2}$
such that $\mu(\ext\fS_{H_1,E_1,H_2,E_2}^s)=1$.
\end{Cor}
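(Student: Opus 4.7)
The plan is to apply Theorem~\ref{thm:KMC} (Choquet's part~B) to $\rho$ viewed as a point of the larger set $\fS_{H_1,E_1,H_2,E_2}$ in which every constraint is relaxed to an inequality. This produces a Borel probability measure $\mu$ with barycenter $\rho$, supported on $\ext\fS_{H_1,E_1,H_2,E_2}$; by Theorem~\ref{thm:ExtSE+} every point of this support is a pure state satisfying $\Tr H_k\sigma\leq E_k$ for $k=1,2$. The remaining work is to show that $\mu$-almost every such $\sigma$ actually satisfies the equality constraints indexed by $s$.

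The central step is the barycenter identity
\[
\Tr H_k\rho=\int \Tr H_k\sigma\,\mu(d\sigma),\qquad k=1,2.
\]
For each spectral projector $P_n$ of $H_k$ onto $[0,n]$, the operator $H_kP_n$ is bounded, so $\sigma\mapsto\Tr(H_kP_n\sigma)$ is a continuous linear functional and the barycenter property gives $\Tr(H_kP_n\rho)=\int\Tr(H_kP_n\sigma)\,\mu(d\sigma)$. Since $\Tr(H_kP_n\sigma)$ is non-negative and non-decreasing in $n$ with pointwise supremum $\Tr H_k\sigma=f_{H_k}(\sigma)$, monotone convergence applied both to $\Tr(H_kP_n\rho)\uparrow\Tr H_k\rho$ and inside the integral on the right yields the displayed identity.

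For each $k\in s$ we have $\Tr H_k\rho=E_k$ while $\Tr H_k\sigma\leq E_k$ for $\mu$-almost every $\sigma$ (i.e.\ on the support of $\mu$). Equality in the integral then forces $\Tr H_k\sigma=E_k$ for $\mu$-almost every $\sigma$. Intersecting the resulting full-measure sets over the finite index set $s\subseteq\{1,2\}$ gives $\mu\!\left(\ext\fS_{H_1,E_1,H_2,E_2}^{\,s}\right)=1$, which is what the statement requires; one may, if desired, restrict $\mu$ to this set without changing the barycenter.

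The main obstacle is the justification of the exchange of limit and integral, i.e.\ the barycenter identity for the possibly unbounded operators $H_k$. Once this is secured via the spectral approximation $H_kP_n$ and two applications of monotone convergence, the rest of the argument is a clean measure-theoretic deduction from finiteness of $E_k$ and a standard ``equality in an a.e.\ inequality'' step.
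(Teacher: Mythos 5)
Your proof is correct and follows essentially the same route as the paper's: apply part B of Theorem~\ref{thm:KMC} to the relaxed sublevel set, establish the barycenter identity $\Tr H_k\rho=\int\Tr (H_k\sigma)\,\mu(d\sigma)$, and for $k\in s$ deduce $\Tr H_k\sigma=E_k$ for $\mu$-almost all $\sigma$ from equality in an almost-everywhere inequality. The only difference is that the paper justifies the barycenter identity by citing the appendix of \cite{EM} (applicable since $\sigma\mapsto\Tr H_k\sigma$ is affine and lower semicontinuous on the closed, bounded, convex set $\fS_{H_1,E_1,H_2,E_2}$), whereas you prove it directly via the spectral truncations $H_kP_n$ and two applications of monotone convergence, which is a valid and more self-contained justification.
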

\begin{proof} 
The assertion B of Theorem~\ref{thm:KMC}
implies that equation \eqref{u-rep+} holds for some probability measure $\mu$
supported by the set $\,\ext\fS_{H_1,E_1,H_2,E_2}$. Since the function
$\fS_{H_1,E_1,H_2,E_2}\to[0,+\infty]$, $\sigma\mapsto\Tr H_k\sigma$ is affine 
and lower semicontinuous, and since the intersection $\fS_{H_1,E_1,H_2,E_2}$ of 
sublevel sets is closed, bounded, and convex, we have 
(see, f.i., \cite[the Appendix]{EM})
\begin{equation}\label{eq:cor12help}
\int \Tr(H_k\sigma)\,\mu(d\sigma)=\Tr H_k\rho,
\qquad k=1,2.
\end{equation}
If $\Tr H_k\rho=E_k$ holds for $k\in\{1,2\}$, then equation \eqref{eq:cor12help}
implies $\Tr H_k\sigma=E_k$ for $\mu$-almost all $\sigma$ as $\Tr H_k\sigma\leq E_k$ 
holds for all $\sigma$ in the support of $\mu$.
\end{proof}

\begin{Cor}\label{cor:C+}
Let $H_1$ and $H_2$ be arbitrary positive operators on $\cH$. Any state $\rho$ such 
that $\Tr H_k\rho=E_k<+\infty$, $k=1,2$, can be represented as
\begin{equation}\label{u-rep}
\rho=\int \sigma\mu(d\sigma),
\end{equation}
where $\mu$ is a Borel probability measure supported by pure states such that
$\Tr H_k\sigma=E_k$, $k=1,2$, for $\mu$-almost all $\sigma$.\smallskip
\end{Cor}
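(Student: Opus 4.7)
The plan is to deduce this directly from Corollary~\ref{cor:C++} by specializing to the case $s=\{1,2\}$. First I would observe that the hypothesis $\Tr H_k\rho = E_k < +\infty$ for $k=1,2$ places $\rho$ in the doubly-constrained level set $\fS_{H_1,E_1,H_2,E_2}^{\{1,2\}}$. Applying Corollary~\ref{cor:C++} with $s=\{1,2\}$ then yields a Borel probability measure $\mu$ supported on $\ext\fS_{H_1,E_1,H_2,E_2}$ such that
\[
\rho = \int \sigma\,\mu(d\sigma) \quad\text{and}\quad \mu\bigl(\ext\fS_{H_1,E_1,H_2,E_2}^{\{1,2\}}\bigr) = 1.
\]

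Second, I would identify the support. By Lemma~\ref{lem:sub-level-mixed}, the level-set intersection $\fS_{H_1,E_1,H_2,E_2}^{\{1,2\}}$ is a face of the sublevel-set intersection $\fS_{H_1,E_1,H_2,E_2}$, so its extreme points are also extreme points of the larger set. Theorem~\ref{thm:ExtSE+} guarantees that every extreme point of $\fS_{H_1,E_1,H_2,E_2}$ is a pure state. Consequently, $\mu$-almost every $\sigma$ is a pure state belonging to $\fS_{H_1,E_1,H_2,E_2}^{\{1,2\}}$, i.e.\ satisfying $\Tr H_k\sigma = E_k$ for $k=1,2$, as required.

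There is essentially no obstacle here: the corollary is a clean specialization, and all the real work has already been carried out upstream. The convex-geometric input from Theorem~\ref{thm:ExtSE+} (applied via the dimension-gap argument of Corollary~\ref{cor:easy-gaps} and Lemma~\ref{lem:dimensions-q}) identifies the extreme points as pure states, while the measure-theoretic input from Proposition~5 of \cite{P&Sh}, refined in Theorem~\ref{thm:KMC} and then pushed from sublevel to level sets by the lower-semicontinuity argument in the proof of Corollary~\ref{cor:C++}, produces the integral representation with the correct almost-sure equalities. Combining these two ingredients is all that is needed.
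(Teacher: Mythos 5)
Your proposal is correct and follows exactly the paper's own route: the paper proves this corollary in one line as the special case $s=\{1,2\}$ of Corollary~\ref{cor:C++}. Your additional remarks on why the extreme points of the level set are pure states satisfying the equality constraints are accurate and merely make explicit what the paper leaves implicit.
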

\begin{proof} 
Corollary~\ref{cor:C+} is the case $s=\{1,2\}$ of Corollary~\ref{cor:C++}.
\end{proof}
Theorem~\ref{thm:KMC} and its Corollaries~\ref{cor:C++} and~\ref{cor:C+} are not 
valid for more than two operators, as the intersection
$\fS_{H_1,E_1,H_2,E_2,\ldots,H_\ell,E_\ell}^s$ of sublevel and level sets may have 
extreme points that are no pure states if $\ell\geq 3$. See Remark~\ref{rem:three-obs} 
for an example.
\par
A Borel probability measure supported on pure states is known as a
{\em generalized ensemble} of pure states \cite{HolevoShirokov2006}, and its
barycenter as a {\em continuous convex combination} of pure states.
The probability measure $\mu$ in part B of Theorem~\ref{thm:KMC} is a
generalized ensemble of pure states with bounded expected values.
In the strict sense, the probability measure $\mu$ in Corollary~\ref{cor:C+}
is not a generalized ensemble of pure states with fixed expected values,
as the support of $\mu$ may contain a set of $\mu$-measure zero where
one of the expected values could be smaller than the fixed value.
\par
\begin{Exa}[On pure-state decomposition of bipartite states]
If quantum systems $A$ and $B$ are described by Hilbert spaces $\cH_A$ and $\cH_B$, 
then the bipartite system $AB$ is described by the tensor product of these spaces, 
i.e. $\cH_{AB}\doteq\cH_A\otimes\cH_B$. A state in $\fS(\cH_{AB})$ is denoted by 
$\rho_{AB}$, its marginal states\footnote{Here $\Tr_{\cH_X}$ 
denotes the partial trace over the space $\cH_X$.}
$\Tr_{\cH_B}\rho_{AB}$ and $\Tr_{\cH_A}\rho_{AB}$ are denoted, respectively, by 
$\rho_{A}$ and $\rho_{B}$. See for example \cite{H-SCI,Wilde}.
\end{Exa}
Corollary \ref{cor:C+} implies the following.
\par
\begin{Cor}\label{new-c}
Let $H_A$ and $H_B$ be arbitrary positive operators on $\cH_A$ and $\cH_B$ correspondingly. Any state $\rho_{AB}$ such that
$\Tr H_A\rho_A=E_A<+\infty$ and $\Tr H_B\rho_B=E_B<+\infty$  can be represented as
\begin{equation*}
\rho_{AB}=\int \sigma_{\!AB}\,\mu(d\sigma_{\!AB}),
\end{equation*}
where $\mu$ is a Borel probability measure supported by pure states in $\fS(\cH_{AB})$ such that
$\Tr H_A\sigma_A=E_A$ and $\Tr H_B\sigma_B=E_B$ for $\mu$-almost all $\sigma_{AB}$.

If $\dim\cH_A=d_A<+\infty$ and $\dim\cH_B=d_B<+\infty$ then the state $\rho_{AB}$ can be represented as
\[\textstyle
\rho_{AB}=\sum_{k=1}^{d^2_Ad^2_B} p_k\sigma^k_{AB},
\]
where $\{p_k\}$ is a probability distribution and 
$\{\sigma^k_{AB}\}$ is a set of pure states such that $\Tr H_A\sigma^k_A=E_A$ and $\Tr H_B\sigma^k_B=E_B$ for all $k$.
\end{Cor}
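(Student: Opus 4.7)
The plan is to reduce Corollary~\ref{new-c} directly to Corollary~\ref{cor:C+} (and to Corollary~\ref{cor:finite-dim} for the second assertion) by lifting the one-sided operators $H_A$ and $H_B$ to positive operators on the composite space $\cH_{AB}=\cH_A\otimes\cH_B$. Concretely, I would introduce
\[
\widetilde H_A = H_A\otimes\id_B
\qquad\text{and}\qquad
\widetilde H_B = \id_A\otimes H_B,
\]
which are positive (generally unbounded) operators on $\cH_{AB}$ in the sense of Section~\ref{sec:extreme-points-quantum}.

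The first technical step is to verify the bridging identity
\[
\Tr \widetilde H_A\,\rho_{AB} = \Tr H_A\,\rho_A,
\qquad
\Tr \widetilde H_B\,\rho_{AB} = \Tr H_B\,\rho_B,
\]
for every $\rho_{AB}\in\fS(\cH_{AB})$. If $P_n^A$ is the spectral projector of $H_A$ associated to $[0,n]$, then $P_n^A\otimes\id_B$ is the spectral projector of $\widetilde H_A$ associated to $[0,n]$, and the bounded identity $\Tr((H_AP_n^A)\otimes\id_B)\rho_{AB}=\Tr(H_AP_n^A)\rho_A$ holds by the defining property of the partial trace. Passing to the supremum over $n\in\N$ on each side (using the lower semicontinuous definition of $f_{H_A}$ from Section~\ref{sec:extreme-points-quantum}) yields the claimed equality in $[0,+\infty]$. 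The same argument works for $\widetilde H_B$.

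With this identity in hand, the hypothesis $\Tr H_A\rho_A=E_A<\infty$ and $\Tr H_B\rho_B=E_B<\infty$ translates to $\Tr\widetilde H_A\rho_{AB}=E_A$ and $\Tr\widetilde H_B\rho_{AB}=E_B$. Corollary~\ref{cor:C+} applied to $\rho_{AB}$ with the pair $\widetilde H_A,\widetilde H_B$ produces a Borel probability measure $\mu$ supported on the pure states of $\fS(\cH_{AB})$ with $\rho_{AB}=\int\sigma_{AB}\,\mu(d\sigma_{AB})$ and $\Tr\widetilde H_A\sigma_{AB}=E_A$, $\Tr\widetilde H_B\sigma_{AB}=E_B$ for $\mu$-almost every $\sigma_{AB}$. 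Reading these last two equalities back through the bridging identity gives $\Tr H_A\sigma_A=E_A$ and $\Tr H_B\sigma_B=E_B$ for $\mu$-almost every $\sigma_{AB}$, which is the first statement.

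For the finite-dimensional part, $\dim\cH_{AB}=d_Ad_B$, hence $\dim\fS(\cH_{AB})=d_A^2d_B^2-1$. Applying Corollary~\ref{cor:finite-dim} to $\rho_{AB}$ with the operators $\widetilde H_A,\widetilde H_B$ and the index set $s=\{1,2\}$ yields a convex combination of at most $d_A^2d_B^2$ pure states lying in the level set $\fS_{\widetilde H_A,E_A,\widetilde H_B,E_B}^{\{1,2\}}$, and the bridging identity again turns the constraints into the required marginal-energy conditions. The only real obstacle is the first step: making the unbounded tensor product $H_A\otimes\id_B$ compatible with the extended-arithmetic affine functional $f_{\widetilde H_A}$ used in Section~\ref{sec:extreme-points-quantum}; once the spectral-projector approximation is carried out carefully, the rest of the argument is a direct specialization of results already established.
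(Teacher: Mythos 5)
Your proposal is correct and follows essentially the same route as the paper, which simply derives the statement from Corollary~\ref{cor:C+} (and Corollary~\ref{cor:finite-dim} in the finite-dimensional case) by applying them to the positive operators $H_A\otimes\id_B$ and $\id_A\otimes H_B$ on $\cH_{AB}$. Your spectral-projector verification of the identity $\Tr(H_A\otimes\id_B)\rho_{AB}=\Tr H_A\rho_A$ correctly fills in the one detail the paper leaves implicit.
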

\begin{Que}
If $H_A$ and $H_B$ are Hamiltonians of systems $A$ and $B$, then Corollary \ref{new-c} 
states that any bipartite state with finite marginal energies can be decomposed into 
pure states with the same marginal energies. An interesting open question is the possibility of a similar decomposition of a state of a composite quantum system 
consisting of more than two subsystems.  
\end{Que}
%
%
\section{Applications to Quantum Information Theory}
\label{sec:applications}
In this section we consider some applications of our main results in quantum
information theory and mathematical physics. These applications are based on
the following observation.
\par
\begin{Thm}\label{thm:sup-pure-2obs+}
Let $H_1,H_2$ be arbitrary positive operators, let $E_1,E_2\in\R$,
let $s\subseteq\{1,2\}$, and let $f:\fS_{H_1,E_1,H_2,E_2}^s\to[-\infty,\infty]$
be a convex function on the intersection $\,\fS_{H_1,E_1,H_2,E_2}^s$ of sublevel 
and levels sets. If $f$ is either lower semicontinuous or upper semicontinuous 
and upper bounded, then
\begin{equation}\label{c-2-r-2obs+}
\sup\{f(\rho): \rho\in\fS_{H_1,E_1,H_2,E_2}^s\}
=\sup\{f(\rho): \rho\in\ext\fS_{H_1,E_1,H_2,E_2}^s\},
\end{equation}
where $\,\ext\fS_{H_1,E_1,H_2,E_2}^s$ is the set of pure states in 
$\fS_{H_1,E_1,H_2,E_2}^s$.
\par
If the domain of $f$ is the intersection $\fS_{H_1,E_1,H_2,E_2}$ of sublevel sets 
($s=\emptyset$), if $f$ is upper semicontinuous, and if one of the operators 
$H_1$ or $H_2$ has discrete spectrum of finite multiplicity, then the supremum on 
the right-hand side of \eqref{c-2-r-2obs+} is attained at a pure state in 
$\fS_{H_1,E_1,H_2,E_2}$.
\end{Thm}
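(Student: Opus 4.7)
The ``$\geq$'' inequality in \eqref{c-2-r-2obs+} is immediate, since $\ext\fS_{H_1,E_1,H_2,E_2}^s$ is contained in $\fS_{H_1,E_1,H_2,E_2}^s$; the work lies in ``$\leq$''. For each $\rho$ in $C:=\fS_{H_1,E_1,H_2,E_2}^s$ I would invoke Corollary~\ref{cor:C++} to produce a Borel probability measure $\mu$ with barycenter $\rho$, supported on $\ext\fS_{H_1,E_1,H_2,E_2}$ and satisfying $\mu(\ext C)=1$. Once this is in hand, the crux is a Jensen-type inequality
\[
f(\rho)\;\leq\;\int f(\sigma)\,\mu(d\sigma),
\]
after which concentration of $\mu$ on $\ext C$ yields $f(\rho)\leq\sup\{f(\sigma):\sigma\in\ext C\}$, giving \eqref{c-2-r-2obs+}.

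The Jensen step splits along the two hypotheses on $f$. When $f$ is convex and lower semicontinuous, it is the pointwise supremum of the continuous affine functions it dominates; applying the barycentric identity $\ell(\rho)=\int\ell(\sigma)\,\mu(d\sigma)$ to each such $\ell$ and taking the supremum gives the required inequality, with $\int f\,d\mu$ possibly $+\infty$. When $f$ is convex, upper semicontinuous, and upper bounded, the ``sup of affine minorants'' representation is unavailable, so I would use upper boundedness to reduce to a standard barycentric inequality for bounded Borel functions on the $\mu$-compact convex set $\fS_{H_1,E_1,H_2,E_2}$: upper semicontinuity means $f$ is the infimum of a decreasing net of bounded continuous functions, and the barycentric calculus available on $\mu$-compact sets (as developed in \cite{P&Sh} and the Appendix of \cite{EM}) allows one to pass to the limit using the uniform upper bound together with Fatou-type arguments.

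For the second statement, assume $s=\emptyset$ and that one of $H_1,H_2$ has discrete spectrum of finite multiplicity, so that (as recalled at the start of Section~\ref{sec:decomposition}) the intersection $\fS_{H_1,E_1,H_2,E_2}$ of sublevel sets is compact. By Theorem~\ref{thm:KMC} the set $\ext\fS_{H_1,E_1,H_2,E_2}$ is closed, hence compact as a closed subset of a compact set, and nonempty. Since $f$ is upper semicontinuous, its restriction to $\ext\fS_{H_1,E_1,H_2,E_2}$ attains its supremum on this compact set, proving the attainment claim.

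The main obstacle is the Jensen-type inequality in the upper semicontinuous, upper bounded case: the obvious approximation argument from below by continuous affine functions fails, and one must combine upper boundedness, upper semicontinuity, and the $\mu$-compactness of the ambient set to reduce the integration-against-barycenter identity from continuous affine integrands to the general convex upper semicontinuous $f$. Everything else is a direct assembly of Corollary~\ref{cor:C++}, Theorem~\ref{thm:KMC}, and the compactness fact from the opening of Section~\ref{sec:decomposition}.
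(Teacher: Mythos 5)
Your proposal is correct and follows essentially the same route as the paper: decompose $\rho$ via Corollary~\ref{cor:C++} into a measure concentrated on $\ext\fS_{H_1,E_1,H_2,E_2}^s$, apply the Jensen inequality $f(\rho)\leq\int f\,d\mu$ (which the paper simply delegates to the Appendix of \cite{EM} under exactly the two hypotheses you treat), and use compactness of the sublevel-set intersection together with the closedness of its extreme points from Theorem~\ref{thm:KMC} for the attainment claim. The only difference is that you sketch the proof of the cited Jensen step rather than citing it.
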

\begin{proof}
By Corollary~\ref{cor:C++} and Theorem~\ref{thm:ExtSE+}, for any mixed state $\rho$ 
in $\fS_{H_1,E_1,H_2,E_2}^s$ there is a probability measure $\mu$ supported by pure 
states in the intersection $\fS_{H_1,E_1,H_2,E_2}$ of sublevel sets such that
\[
\rho=\int \sigma\mu(d\sigma)
\]
and such that $\mu(\ext\fS_{H_1,E_1,H_2,E_2}^s)=1$. The assumed properties of the 
function $f$ guarantee (see, f.i., \cite[the Appendix]{EM}) the validity of the 
Jensen inequality
\[
f(\rho)\leq \int f(\sigma)\mu(d\sigma),
\]
which implies the existence of a pure state $\sigma$ in $\fS_{H_1,E_1,H_2,E_2}^s$
that satisfies $f(\sigma)\geq f(\rho)$.
\par
If one of the operators $H_1$ or $H_2$ has discrete spectrum of finite multiplicity, 
then the set $\fS_{H_1,E_1,H_2,E_2}$ is compact. Hence, the set of extreme points 
$\,\ext\fS_{H_1,E_1,H_2,E_2}$ is compact by Theorem~\ref{thm:KMC}. This and the 
above arguments imply that the first supremum in \eqref{c-2-r-2obs+} is attained at 
a pure state in $\fS_{H_1,E_1,H_2,E_2}$ (provided that the function $f$ is upper semicontinuous).
\end{proof}
Of course, we may replace the convex function $f$ in 
Theorem~\ref{thm:sup-pure-2obs+} by the concave function $-f$ (and supremum by 
infimum). This idea is motivated by potential applications, since many important 
characteristics of a state in quantum information theory are concave lower 
semicontinuous and nonnegative. See the following examples.
\par
\begin{Exa}[The minimal output entropy of an energy-constrained quantum channel]
The \emph{von Neumann entropy} of a quantum state
$\rho$ in $\fS(\cH)$ is a basic characteristic of this state defined by the formula
$H(\rho)=\operatorname{Tr}\eta(\rho)$, where  $\eta(x)=-x\log x$ for $x>0$
and $\eta(0)=0$. The function $H(\rho)$ is  concave and lower semicontinuous on the
set~$\fS(\cH)$ and takes values in~$[0,+\infty]$, see for example \cite{H-SCI,L-2,W}.
\par
A quantum channel from a system $A$ to a system $B$ is a completely positive
trace-preserving linear map $\Phi:\fT(\cH)\to\fT(\cK)$ between the Banach
spaces $\fT(\cH)$ and $\fT(\cK)$, where $\cH$ and $\cK$ are Hilbert spaces
associated with the systems $A$ and $B$, respectively. In the analysis of
information abilities of quantum channels, the notion of the minimal output
entropy of a channel is widely used \cite{H-SCI,GP&Co,MGH,Man,Shor}. It is defined
as
\begin{equation}\label{eq:min-output}
H_{\rm min}(\Phi)=\inf_{\rho\in\fS(\cH)} H(\Phi(\rho))=\inf_{\varphi\in\cH_1} H(\Phi(\ket{\varphi}\!\!\bra{\varphi})),
\end{equation}
where $\cH_1$ is the unit sphere in $\cH$, and $\ket{\varphi}\!\!\bra{\varphi}$
denotes the projector of rank one onto the line spanned by $\varphi\in\cH_1$.
The second equality of \eqref{eq:min-output} follows
from the concavity of the function $\rho\mapsto H(\Phi(\rho))$ and from the
possibility to decompose any mixed state into a convex combination of pure states.
\par
In studies of infinite-dimensional quantum channels, it is reasonable to impose the
energy-constraint on input states of these channels. So, alongside with the minimal
output entropy $H_{\rm min}(\Phi)$, it is reasonable to consider its constrained
versions (cf.~\cite{Man})
\begin{eqnarray}
\label{CME-1}
H_{\rm min}(\Phi,H,E) =
\inf_{\rho\in\fS(\cH):\Tr H\rho\leq E} H(\Phi(\rho)),\\[.5\baselineskip]
\label{CME-2}
H^{=}_{\rm min}(\Phi,H,E) =
\inf_{\rho\in\fS(\cH): \Tr H\rho=E} H(\Phi(\rho)),
\end{eqnarray}
where $H$ is a positive operator, the {\em energy observable}.
In contrast to the unconstrained case, it is not obvious that the infima in
(\ref{CME-1}) and (\ref{CME-2}) can be taken only over pure states satisfying the
conditions $\Tr H\rho\leq E$ and $\Tr H\rho=E$ correspondingly. In \cite{Man} it
is shown that this holds in the finite-dimensional settings. The above
Theorem~\ref{thm:sup-pure-2obs+} allows to prove the same assertion for an
arbitrary infinite-dimensional channel $\Phi$ and any energy observable $H$.
\end{Exa}
\begin{Cor}\label{cor:minimum-output}
Let $H$ be an arbitrary positive operator and let
$E$ be greater than the infimum of the spectrum of $H$. Then both infima in (\ref{CME-1}) and (\ref{CME-2}) can be taken over pure states, i.e.
\begin{eqnarray}
\label{CME+1}
H_{\rm min}(\Phi,H,E) =
\inf_{\varphi\in\cH_1:\,\braket{\varphi|H|\varphi}\leq E} 
H(\Phi(\ket{\varphi}\!\!\bra{\varphi})),\\[.5\baselineskip]
\label{CME+2}
H^{=}_{\rm min}(\Phi,H,E) =
\inf_{\varphi\in\cH_1:\,\braket{\varphi|H|\varphi}=E} 
H(\Phi(\ket{\varphi}\!\!\bra{\varphi})).
\end{eqnarray}
If the operator $H$ has discrete spectrum of finite multiplicity, then the
infimum in (\ref{CME+1}) is attained at a unit vector.
\end{Cor}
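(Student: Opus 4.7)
The plan is to apply Theorem \ref{thm:sup-pure-2obs+} with $H_1=H_2=H$, $E_1=E_2=E$, taking $f$ to be (minus) the concave lower semicontinuous functional $\rho\mapsto H(\Phi(\rho))$. Before invoking the theorem, I would record three preliminary facts. First, since $\Phi:\fT(\cH)\to\fT(\cK)$ is trace-preserving and completely positive, hence continuous for the trace norm, and since von Neumann entropy is lower semicontinuous and concave with values in $[0,+\infty]$, the composition $g=H\circ\Phi$ is concave, lower semicontinuous, and bounded below by $0$. In particular, $f=-g$ is convex, upper semicontinuous, and upper bounded on any subset of $\fS(\cH)$. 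Second, the hypothesis that $E$ exceeds the infimum of the spectrum of $H$ guarantees that the level set $\fS_{H,E}^{\{1\}}$ (and \emph{a fortiori} the sublevel set $\fS_{H,E}$) is nonempty: picking a unit vector $\psi_0$ with $\braket{\psi_0|H|\psi_0}<E$ and a vector $\psi_\infty$ with $\braket{\psi_\infty|H|\psi_\infty}$ large (or, if $H$ is bounded, any unit vector giving value $\ge E$), the rank-two mixture $\lambda|\psi_0\rangle\langle\psi_0|+(1-\lambda)|\psi_\infty\rangle\langle\psi_\infty|$ realizes the value $E$ for a suitable $\lambda\in(0,1)$. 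Third, by Corollary \ref{cor:ExtSE}, the extreme points of $\fS_{H,E}^s$ are precisely the pure states lying in that set, so they are exactly the projectors $|\varphi\rangle\langle\varphi|$ with $\varphi\in\cH_1$ satisfying $\braket{\varphi|H|\varphi}\leq E$ (resp.\ $=E$) according as $s=\emptyset$ or $s=\{1\}$.

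With these facts in hand, Theorem \ref{thm:sup-pure-2obs+} applied to $f=-g$ gives
\[
\sup\{-g(\rho):\rho\in\fS_{H,E}^s\}=\sup\{-g(\rho):\rho\in\ext\fS_{H,E}^s\},
\]
which upon negation is exactly the desired equality of infima, since the right-hand infimum runs through projectors $|\varphi\rangle\langle\varphi|$ and $g(|\varphi\rangle\langle\varphi|)=H(\Phi(|\varphi\rangle\langle\varphi|))$. Setting $s=\emptyset$ yields \eqref{CME+1} and setting $s=\{1\}$ yields \eqref{CME+2}.

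For the final assertion, assume that $H$ has discrete spectrum of finite multiplicity. Then by the paragraph preceding Theorem \ref{thm:KMC}, the sublevel set $\fS_{H,E}$ is compact. The second clause of Theorem \ref{thm:sup-pure-2obs+} (which requires $s=\emptyset$, $f$ upper semicontinuous, and one of the operators to have discrete spectrum of finite multiplicity) then asserts that the supremum on the right-hand side of \eqref{c-2-r-2obs+} is attained. Translated back to our setting, this means that the infimum in \eqref{CME+1} is attained at some pure state $|\varphi_*\rangle\langle\varphi_*|$, i.e.\ at a unit vector $\varphi_*\in\cH_1$ with $\braket{\varphi_*|H|\varphi_*}\leq E$.

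There is no serious obstacle here; the only subtlety to double-check is the semicontinuity and boundedness of the composed functional so that the correct clause (``upper semicontinuous and upper bounded'') of Theorem \ref{thm:sup-pure-2obs+} actually applies after negation. The non-emptiness argument for the level set, needed to avoid trivial (vacuous) suprema, is likewise elementary but should be stated explicitly to justify the hypothesis that $E$ lies strictly above $\inf\operatorname{spec}(H)$.
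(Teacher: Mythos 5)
Your proof is correct and follows essentially the same route as the paper: apply Theorem~\ref{thm:sup-pure-2obs+} with $H_1=H_2=H$ to the negative of the concave, nonnegative, lower semicontinuous function $\rho\mapsto H(\Phi(\rho))$, so that the ``upper semicontinuous and upper bounded'' clause yields \eqref{CME+1} and \eqref{CME+2}, and the second clause of that theorem gives attainment when $H$ has discrete spectrum of finite multiplicity. The only quibble is your side remark that the level set is always nonempty: if $H$ is bounded with $\sup\operatorname{spec}(H)<E$, the level set is empty and \eqref{CME+2} holds vacuously, but this does not affect the argument.
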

\begin{proof}
By Theorem~\ref{thm:sup-pure-2obs+}, it suffices to note that the function 
$\rho\mapsto H(\Phi(\rho))$ is concave nonnegative and lower semicontinuous (as 
a composition of a continuous and a lower semicontinuous function).
\end{proof}
Corollary~\ref{cor:minimum-output} simplifies the definitions of the
quantities $H_{\rm min}(\Phi,H,E)$ and $H^{=}_{\rm min}(\Phi,H,E)$ significantly. It
also shows that
$$
H_{\rm min}(\widehat{\Phi},H,E)=H_{\rm min}(\Phi,H,E)\quad \text{and}
\quad H^{=}_{\rm min}(\widehat{\Phi},H,E)=H^{=}_{\rm min}(\Phi,H,E),
$$
where $\widehat{\Phi}$ is a complementary channel to the channel $\Phi$, since
for any pure state $\rho$ we have $H(\widehat{\Phi}(\rho))=H(\Phi(\rho))$, see
Section~8.3 of \cite{H-SCI}.
\par
\begin{Exa}[On the definition of the operator E-norms]
On the algebra $\fB(\cH)$ of all bounded operators one can consider the family
$\{\|A\|_E^H\}_{E>0}$ of norms induced by a positive operator $H$ with the infimum
of the spectrum equal to zero \cite{ECN}. For any $E>0$ the norm $\|A\|_E^H$ is
defined as
\begin{equation}\label{ec-on}
 \|A\|^{H}_E\doteq \sup_{\rho\in\fS(\cH):\Tr H\rho\leq E}\sqrt{\Tr A\rho A^*}.
\end{equation}
These norms, called operator \emph{E}-norms, appear as ``doppelganger'' of
the energy-constrained Bures distance between completely positive linear maps in
the generalized version of the Kretsch\-mann-Schlingemann-Werner theorem
\cite[Section 4]{ECN}.
\par
For any $A\in\fB(\cH)$ the function $E\mapsto\|A\|_E^H$ is concave and tends to
$\|A\|$ (the operator norm of $A$) as $E\to+\infty$. All the norms $\|A\|_E^H$
are equivalent (for different $E$ and fixed $H$) on $\fB(\cH)$ and generate a
topology depending on the operator $H$. If $H$ is an unbounded operator then
this topology is weaker than the norm topology on $\fB(\cH)$, it coincides with
the strong operator topology on bounded subsets of $\fB(\cH)$ provided that the
operator $H$ has discrete spectrum of finite multiplicity.
\par
If we assume that the supremum in (\ref{ec-on}) can be taken only over pure states
$\rho$ such that $\Tr H\rho\leq E$ then we obtain the following simpler definition
\begin{equation}\label{ec-on-b}
 \|A\|^H_E\doteq \sup_{\varphi\in\cH_1,\braket{\varphi|H|\varphi}\leq E}\|A\varphi\|,
\end{equation}
which shows the sense of the norm $\|A\|^H_E$ as a constrained version of the operator
norm $\|A\|$. In \cite{ECN} the above assumption was proved only in the case when the
operator $H$ has discrete spectrum of finite multiplicity. 
Theorem~\ref{thm:sup-pure-2obs+} (applied to the continuous affine function
$f(\rho)=\Tr A\rho A^*$) allows to fill this gap.
\end{Exa}
\begin{Cor}
For an arbitrary positive operator $H$, the definitions (\ref{ec-on}) and 
(\ref{ec-on-b}) coincide for any $A\in\fB(\cH)$.
\end{Cor}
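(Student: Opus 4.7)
The plan is to invoke Theorem~\ref{thm:sup-pure-2obs+} with the special choices $H_1=H_2=H$, $E_1=E_2=E$, $s=\emptyset$ (so that the constraint set is the sublevel set $\fS_{H,E}$), and with the affine function $f:\fS_{H,E}\to[0,\infty)$ defined by $f(\rho)=\Tr A\rho A^*$. Since the square root is monotone on $[0,\infty)$, the equality of the two definitions of $\|A\|_E^H$ will be immediate once we prove
\[
\sup_{\rho\in\fS_{H,E}}\Tr A\rho A^*
= \sup_{\rho\in\ext\fS_{H,E}}\Tr A\rho A^*,
\]
together with the identification of pure states with unit vectors.

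The hypotheses of Theorem~\ref{thm:sup-pure-2obs+} are straightforward to verify. The map $\rho\mapsto A\rho A^*$ is linear and, since $A\in\fB(\cH)$, it satisfies $\|A\rho A^*\|_1\leq\|A\|^2\|\rho\|_1$, so $f$ is an affine (hence convex) functional, is bounded above by $\|A\|^2$ on $\fS_{H,E}$, and is continuous on $\fS(\cH)$ with respect to the trace norm (hence both lower and upper semicontinuous). Thus \eqref{c-2-r-2obs+} applies in the form above.

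Next I would pass from the extreme points of $\fS_{H,E}$ to unit vectors. By Theorem~\ref{thm:ExtSE+} every $\rho\in\ext\fS_{H,E}$ is a pure state, i.e.\ a projector $\ket{\varphi}\!\!\bra{\varphi}$ of rank one with $\varphi\in\cH_1$, and conversely any such pure state lying in $\fS_{H,E}$ provides an extreme point. For such $\rho$ one has $\Tr A\rho A^*=\braket{\varphi|A^*A|\varphi}=\|A\varphi\|^2$ and $\Tr H\rho=\braket{\varphi|H|\varphi}$ (using the usual interpretation of the latter through the spectral approximation $HP_n$ used to define $f_H$). Consequently the supremum on the right-hand side becomes exactly $\sup\{\|A\varphi\|^2:\varphi\in\cH_1,\braket{\varphi|H|\varphi}\leq E\}$.

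Taking square roots of both sides yields \eqref{ec-on-b} and completes the proof. There is no real obstacle here: the whole argument is an application of Theorem~\ref{thm:sup-pure-2obs+}; the only point requiring a brief sentence is the verification that $\rho\mapsto\Tr A\rho A^*$ is continuous and bounded, so that both the ``convex and lower semicontinuous'' and the ``convex, upper semicontinuous, and upper bounded'' alternatives of that theorem are satisfied.
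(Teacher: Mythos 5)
Your proposal is correct and follows exactly the route the paper indicates: it applies Theorem~\ref{thm:sup-pure-2obs+} (with $H_1=H_2=H$, $s=\emptyset$) to the continuous, affine, upper-bounded function $\rho\mapsto\Tr A\rho A^*$, identifies the extreme points of $\fS_{H,E}$ with pure states via Theorem~\ref{thm:ExtSE+}, and concludes by monotonicity of the square root. The paper leaves these verifications implicit, so your write-up simply supplies the same argument in full detail.
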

%
%
\vspace{\baselineskip}
\noindent
{\footnotesize
Acknowledgements.
The first author thanks M.\,R.~Galarza and M.\,M.\ and J.~Weis for hosting
him while working on this project. The second author is grateful to A.\,S.~Holevo and
G.\,G.~Amosov for useful discussions.
Both authors thank F.~de Melo for the idea to study constraints under several
observables.}
%
%
%
\bibliographystyle{plain}

\begin{thebibliography}{10}
%
\bibitem{AlfsenShultz2001}
Alfsen, E.\,M., and F.\,W.~Shultz,
{\em State Spaces of Operator Algebras: Basic Theory, Orientations, and C*-Products},
Boston, Birkhäuser, 2001.\\
\texttt{ISBN:\,978-1-4612-6634-1},
\doi[10.1007/978-1-4612-0147-2]
%
%
\bibitem{Barvinok2002}
Barvinok, A.,
{\em A Course in Convexity},
Providence, AMS, 2002.\\
\texttt{ISBN:\,978-0-8218-2968-4},
\doi[10.1090/gsm/054]
%
\bibitem{B&D}
Becker, S., and N.~Datta,
{\em Convergence rates for quantum evolution and entropic continuity bounds in
infinite dimensions},
Commun. Math. Phys. (2019).\\
\doi[10.1007/s00220-019-03594-2]
%
%
\bibitem{GP&Co} Garcia-Patron, R., C.~Navarrete-Benlloch, S.~Lloyd,
J.\,H.~Shapiro, and N.\,J.~Cerf,
{\em Majorization theory approach to the Gaussian channel minimum entropy conjecture},
Phys. Rev. Lett. \textbf{108}:11, 110505 (2012).
\doi[10.1103/PhysRevLett.108.110505]
%
\bibitem{Giovannetti-etal2014}
Giovannetti, V., R.~Garcia-Patron, N.\,J.~Cerf, and A.\,S.~Holevo,
{\em Ultimate classical communication rates of quantum optical channels},
Nature Photonics \textbf{8}:10, 796--800 (2014).
\doi[10.1038/nphoton.2014.216]
%
\bibitem{Hadamard1894}
Hadamard, J.,
{\em Sur les caractères de convergence des séries a termes positifs et sur les
fonctions indéfiniment croissantes},
Acta Mathematica \textbf{18}, 319--336 (1894).\\
\doi[10.1007/BF02418282]
%
\bibitem{Holevo2004}
Holevo, A.\,S.,
{\em Entanglement-assisted capacities of constrained quantum channels},
Theory of Probability \& Its Applications \textbf{48}:2, 243--255 (2004).
\doi[10.4213/tvp289]
%
\bibitem{H-SCI} Holevo, A.\,S.,
{\em Quantum Systems, Channels, Information: A Mathematical Introduction},
Studies in Mathematical Physics \textbf{16},
Berlin, DeGruyter, 2012.\\
\texttt{ISBN:\,978-3-11-027340-3},
\doi[10.1515/9783110273403]
%
\bibitem{HolevoShirokov2006} Holevo, A.\,S., and M.\,E.~Shirokov,
{\em Continuous ensembles and the capacity of infinite-dimensional quantum channels},
Theory of Probability \& Its Applications \textbf{50}:1, 86--98 (2006).
\doi[10.4213/tvp160]
%
\bibitem{KleeMartin1971} 
Klee, V., and M.~Martin,
{\em Semicontinuity of the face-function of a convex set},
Commentarii Mathematici Helvetici \textbf{46}:1, 1--12 (1971).
%
\bibitem{Knopp1990}
Knopp, K.,
{\em Theory and Application of Infinite Series},
New York, Dover Publications, 1990.
\texttt{ISBN:\,978-0-486-66165-0}
%
\bibitem{L-2} Lindblad, G.,
{\em Expectations and entropy inequalities for finite quantum systems},
Commun.Math. Phys. \textbf{39}:2, 111--119 (1974).
\doi[10.1007/BF01608390]
%
\bibitem{MGH} Mari, A., V.~Giovannetti, and A.\,S.~Holevo,
{\em Quantum state majorization at the output of bosonic Gaussian channels},
Nat Commun \textbf{5}:1, 3826 (2014).\\
\doi[10.1038/ncomms4826]
%
\bibitem{Man} Memarzadeh, L., and S.~Mancini,
{\em Minimum output entropy of a non-Gaussian quantum channel},
Phys. Rev. A \textbf{94}:2, 022341 (2016).
\doi[10.1103/PhysRevA.94.022341]
%
%
\bibitem{OBrien1976} O'Brien, R.\,C.,
{\em On the openness of the barycentre map},
Mathematische Annalen \textbf{223}:3, 207--212 (1976).
\doi[10.1007/BF01360953]
%
\bibitem{Papadopoulou1977} Papadopoulou, S.,
{\em On the geometry of stable compact convex sets},
Mathematische Annalen \textbf{229}:3, 193--200 (1977).
\doi[10.1007/BF01391464]
%
\bibitem{P&Sh} Protasov, V.\,Yu., and M.\,E.~Shirokov,
{\em Generalized compactness in linear spaces and its applications},
Sbornik: Mathematics \textbf{200}:5, 697--722 (2009).
\doi[10.4213/sm5246]
%
\bibitem{Rockafellar1970} Rockafellar, R.\,T.,
{\em Convex Analysis}, Princeton, Princeton University Press, 1970.\\
\texttt{ISBN:\,978-1-4008-7317-3}, \doi[10.1515/9781400873173]
%
\bibitem{Roy1987} 
Roy, N.\,M., 
{\em Extreme points of convex sets in infinite dimensional spaces},
The American Mathematical Monthly \textbf{94}:5, 409--422 (1987).
\doi[10.2307/2322725]
%
\bibitem{Schmuedgen2012} Schm{\"u}dgen, K.,
{\em Unbounded Self-adjoint Operators on Hilbert Space},
Dordrecht, Springer, 2012.
\texttt{ISBN:\,978-94-007-4752-4},
\doi[10.1007/978-94-007-4753-1]
%
\bibitem{Schneider2014} 
Schneider, R.,
{\em Convex Bodies: The Brunn-Minkowski Theory}, 
Second Expanded Edition, New York, Cambridge University Press, 2014.
\texttt{ISBN:\,978-1-107-60101-7}
%
\bibitem{EM} Shirokov, M.\,E.,
{\em On properties of the space of quantum states and their application
to the construction of entanglement monotones},
Izvestiya: Mathematics \textbf{74}:4, 849--882 (2010).
\doi[10.4213/im2815]
%
\bibitem{ECN} Shirokov, M.\,E.,
{\em Operator E-norms and their use},
Sbornik: Mathematics (to appear).\\ 
\href{https://arxiv.org/abs/1806.05668}{\texttt{arXiv:1806.05668 [math.FA]}}
%
\bibitem{AQC} Shirokov, M.\,E., and A.\,S.~Holevo,
{\em On approximation of infinite-dimensional quantum channels},
Problems of Information Transmission \textbf{44}:2, 73--90 (2008).\\
\doi[10.1134/S0032946008020014]
%
\bibitem{Shor} Shor, P.\,W.,
{\em Equivalence of additivity questions in quantum information theory},
Comm. Math. Phys. \textbf{246}:3, 453--472 (2004).
\doi[10.1007/s00220-003-0981-7]
%
\bibitem{Wat} Watrous, J., 
{\em The Theory of Quantum Information}, 
Cambridge, UK, Cambridge Univ. Press, 2018.
\texttt{ISBN:\,978-1-316-84814-2},
\doi[10.1017/9781316848142]
%
\bibitem{W} Wehrl, A.,
{\em General properties of entropy},
Rev. Mod. Phys. \textbf{50}:2, 221--260 (1978).
\doi[10.1103/RevModPhys.50.221]
%
\bibitem{Wilde} Wilde, M.\,M.,
{\em Quantum Information Theory}, Second edition, 
Cambridge, UK, Cambridge University Press, 2017.
\texttt{ISBN:\,978-1-316-80997-6},\\
\doi[10.1017/9781316809976]
%
\bibitem{WildeQi2018} Wilde, M.\,M., and H.~Qi,
{\em Energy-constrained private and quantum capacities of quantum channels},
IEEE Trans. Inform. Theory \textbf{64}:12, 7802--7827 (2018).\\
\doi[10.1109/TIT.2018.2854766]
%
\bibitem{W-EBN} Winter, A.,
{\em Energy-constrained diamond norm with applications to the uniform continuity
of continuous variable channel capacities}.
\href{https://arxiv.org/abs/1712.10267}{\texttt{arXiv:1712.10267 [quant-ph]}}
%
\end{thebibliography}

%
%
%
\vspace{\baselineskip}
\parbox{10cm}{%
Stephan Weis\\
Theisenort 6\\
96231 Bad Staffelstein\\
Germany\\
e-mail \texttt{maths@weis-stephan.de}}
\vspace{\baselineskip}
\par\noindent
\parbox{10cm}{%
Maksim Shirokov\\
Steklov Mathematical Institute\\
Moscow\\
Russia\\
e-mail \texttt{msh@mi.ras.ru}}
\end{document}